\newcommand\cbk{\color{black}}
\newcommand\R{\ensuremath{\mathbb{R}}}
\newcommand\N{\ensuremath{\mathbb{N}}}
\newcommand\eps{\ensuremath{\varepsilon}}
\newtheorem{theorem}{Theorem}[section]
\newtheorem{prop}[theorem]{Proposition}
\newtheorem{cor}[theorem]{Corollary}
\newtheorem{defn}[theorem]{Definition}
\newtheorem{rem}[theorem]{Remark}
\newtheorem{lem}[theorem]{Lemma}
\begin{document}

\title{Ramification of multiple eigenvalues for the Dirichlet-Laplacian in perforated domains}
\author{\ }

\author{
Laura Abatangelo\thanks{Dipartimento di Matematica, Politecnico di Milano, P.zza Leonardo da Vinci 32, 20133 Milano, Italy \texttt{laura.abatangelo@polimi.it}} ,\ 
 Corentin L\'ena\thanks{Institut de Math\'ematiques, B\^atiment UniMail
Rue Emile-Argand 11, 2000 Neuch\^atel, Switzerland \texttt{corentin.lena@unine.ch}} ,
\ and Paolo Musolino\thanks{Dipartimento di Scienze Molecolari e Nanosistemi, Universit\`a Ca' Foscari Venezia, via Torino 155, 30172 Venezia Mestre, Italy \texttt{paolo.musolino@unive.it}}}


\date{August 24, 2022}

\maketitle

\noindent
{{\bf Abstract.} Taking advantage from the so-called {\em Lemma on small eigenvalues} by Colin de Verdi\`{e}re, we study ramification for multiple eigenvalues of the Dirichlet Laplacian in bounded perforated domains. The asymptotic behavior of multiple eigenvalues turns out to depend on the asymptotic expansion of suitable associated eigenfunctions.

We treat the case of planar domains in details, thanks to the asymptotic expansion of a generalization of the so-called $u$-capacity which we compute in dimension 2.
In this case multiple eigenvalues are proved to split essentially by different rates of convergence of the perturbed eigenvalues or by different coefficients in front of their expansion if the rate of two eigenbranches turns out to be the same.   
}

\vspace{11pt}

\noindent
{\bf Keywords.} Dirichlet-Laplacian; multiple eigenvalues; small capacity sets; asymptotic expansion; perforated domain

\vspace{11pt}

\noindent
{\bf 2010 MSC.} 35P20; 31C15; 31B10;  35B25; 35C20

\section{Introduction}

\noindent The present paper deals with multiple eigenvalues for Dirichlet Laplacian in bounded domains {with small holes}. Let $\Omega$ be a bounded open set in $\mathbb{R}^{d}$, $d\geq2$, the considered eigenvalue problem is  
\begin{equation}\label{eq:eige}
 \begin{cases}
  -\Delta u = \lambda u &\text{in }\Omega\, ,\\
  u=0 &\text{on }\partial \Omega\, .
 \end{cases}
\end{equation}
 In what follows, $\N$ denotes the set of natural numbers $\{0,1,2,\ldots\}$ and $\N^\ast \equiv \N \setminus \{0\}$. From classical results in spectral theory, the Dirichlet Laplacian admits a sequence of real eigenvalues tending to infinity
\[
0<\lambda_1(\Omega)\leq\lambda_2(\Omega)\leq \dots \leq \lambda_N(\Omega)\leq \dots
\]
where every eigenvalue is repeated as many times as its multiplicity and in particular the first one is  simple if $\Omega$ is connected.
{The dependence of Laplace operator's spectrum upon domain perturbations has been long investigated, with particular attention to the case of sets with small perforations.

Samarski\u\i \   \cite{Sa48} considered the variation $\delta \lambda_N$ of an eigenvalue $\lambda_N$ for the Dirichlet-Laplacian when a small set $\omega_\varepsilon$ is removed from a subset $\Omega$ of $\mathbb{R}^3$ and obtained the estimate
\[
\delta \lambda_N \leq 4\pi \kappa_N^2 \mathrm{Cap}_\Omega(\omega_\varepsilon)+O(\mathrm{Cap}_\Omega(\omega_\varepsilon)^2)\, ,
\]
where $\mathrm{Cap}_\Omega(\omega_\varepsilon)$ is the {\it standard capacity} of $\omega_\varepsilon$ in $\Omega$, $\kappa_N$ is the maximum value of the $N$-th normalized eigenfunction on $\overline{\omega_\varepsilon}$ (see  Maz'ya, Nazarov, and Plamenevski\u\i \   \cite{MaNaPl84}). Here we recall that, if we consider a bounded, connected open set $\Omega$   of $\mathbb{R}^d$, then for every compact subset $K$ of $\Omega$, the {\it standard capacity} of $K$ in $\Omega$ is defined as
\[
\mathrm{Cap}_{\Omega}(K)\equiv \mathrm{inf} \bigg\{\int_{\Omega} |\nabla f|^2\, dx \colon f \in H^1_0(\Omega)\ \mathrm{and}\ f-\eta_K \in H^1_0(\Omega \setminus K)\bigg\}\, ,
\]
where $\eta_K$ is a fixed smooth function such that $\mathrm{supp}\, \eta_K \subseteq \Omega$ and $\eta_K \equiv 1$ in a neighborhood of $K$.
Rauch and Taylor  \cite{RaTa75} investigated the behavior of the eigenvalues and eigenfunctions of the Laplacian in a domain $\Omega$ where a ``thin'' set is removed. 
In a series of papers (see ,  {\it e.g.}, \cite{Oz80, Oz81a, Oz81b, Oz82a, Oz82b}), Ozawa  computed several asymptotic expansions for the eigenvalues {of} the Laplacian, under many different boundary conditions, when a small perforation is made. For example, Ozawa has shown in \cite{Oz81b} that
if $n=2$  then 
\[
\lambda_N(\Omega \setminus (\varepsilon \overline{\mathbb{B}_2(0,1)}))=\lambda_N(\Omega)-2\pi(\log \varepsilon)^{-1} (u_N(0))^2+O((\log \varepsilon)^{-2}) \qquad \text{as $\varepsilon \to 0^+$}\, ,
\]
where $\mathbb{B}_2(0,1)$ is the unit ball in $\mathbb{R}^2$,  $\lambda_N(\Omega)$ is a simple eigenvalue for the Dirichlet-Laplacian in $\Omega$ and $u_N$ a corresponding $L^2(\Omega)$-normalized eigenfunction.

A detailed analysis of boundary value problem in singularly perturbed domain has been performed by Maz'ya, Nazarov, and Plamenevski\u\i, who considered also eigenvalue problems (see, e.g., \cite{MaNaPl84} and \cite[Chapter 9]{MaNaPl00i}). For example, in the three dimensional case, they have shown in \cite{MaNaPl84} that for the first eigenvalue of the Laplacian with Dirichlet condition we have
\[
\begin{split}
\lambda_1(\Omega \setminus (\varepsilon \overline{\omega}))=&\lambda_1(\Omega)+4\pi \mathrm{Cap}(\omega)(u_1(0))^2 \varepsilon +[4\pi u_1(0)\mathrm{Cap}(\omega)]^2 \\
& \times \Big\{ \Gamma(0)+\frac{u_1(0)}{4\pi}\int_\Omega u_1(x)|x|^{-1}\, dx\Big\}\varepsilon^2+O(\varepsilon^3)\qquad \text{as $\varepsilon \to 0^+$}\, ,
\end{split}
\]
where $u_1$ is a corresponding $L^2(\Omega)$-normalized eigenfunction in $\Omega$, $\mathrm{Cap}(\omega)$ the harmonic capacity of $\omega$ and $\Gamma$ is a function defined through an auxiliary boundary value problem.

Moreover, techniques based on potential theory and integral operators for the study of eigenvalues of the Laplacian in perforated domains have been exploited for example in Ammari, Kang, and Lee \cite{AmKaLe09} and in Lanza de Cristoforis \cite{La12}.

The behavior of the spectrum of the Laplacian under removal of ``small'' sets in the Riemannian setting has been studied by many authors: as an example, we mention the works by Besson \cite{Be85}, Chavel \cite{Ch84}, Chavel and Feldman \cite{ChFe88}, Colbois and Courtois \cite{CoCo91}, Courtois \cite{Courtois1995}. 

If on one hand, most of the above mentioned results deals with simple eigenvalues, less is known for the case of multiple eigenvalues.

A general study on perturbation problems for eigenvalues can be found in the monograph by Kato \cite{Ka95}, which contains also some results for multiple eigenvalues.

In  Nguyen \cite{Ng94}, the author investigated the asymptotic expansion of the multiple eigenvalues and eigenfunctions for boundary value problems in a domain with a small hole. More precisely, he studied the bifurcation of a double or triple eigenvalue in a smooth bounded domain in $\mathbb{R}^3$ where a small ball of radius $\eps$ is removed.

Asymptotic expansions for eigenvalues, both simple and multiple, and eigenfunctions of the Neumann Laplacian in a three-dimensional domain with a small hole are obtained in  Nazarov and Sokolowski \cite{NaSo08}, Laurain,  Nazarov, and Sokolowski \cite{LaNaSo11}, and  Novotny  and Sokolowski \cite{NoSo13} in terms of the eigenvalues of an auxiliary matrix.

Asymptotic expansions for multiple eigenvalues of regularly perturbed domains are obtained in Bruno and Reitich \cite{BrRe01}. Concerning regular perturbations, analyticity results for symmetric functions of possibly multiple eigenvalues upon regular domain perturbations can be found in Lamberti and Lanza de Cristoforis \cite{LaLa02, LaLa04, LaLa07}.

We note that a fundamental tool in the study of perturbation problems for the eigenvalues of the Laplacian is the standard capacity. Indeed, as is well known, if we remove a compact subset $K$ of zero capacity, the spectrum of the Dirichlet-Laplacian on the bounded domain $\Omega$ does not change (see ,  {\it e.g.}, Rauch and Taylor \cite{RaTa75}).
{Moreover, if we denote by
\[
0<\lambda_1(\Omega \setminus K)<\lambda_2(\Omega \setminus K)\leq \dots \leq \lambda_N(\Omega \setminus K)\leq \dots
\]
the sequences of the eigenvalues of the Dirichlet-Laplacian in $\Omega \setminus K$, then Rauch and Taylor \cite{RaTa75} also proved that} the $N$-th eigenvalue $\lambda_N(\Omega\setminus K)$ of the Dirichlet-Laplacian in $\Omega \setminus K$ is close to $\lambda_N(\Omega)$ if  the capacity $\mathrm{Cap}_\Omega(K)$ of $K$ in $\Omega$ is small. 

On one hand, the result by Rauch and Taylor \cite{RaTa75} can be seen as a continuity result for eigenvalues with respect to the capacity. On the other hand, Courtois \cite{Courtois1995} has obtained a higher regularity: he has shown that if $K\subseteq \Omega$ is compact and $\mathrm{Cap}_\Omega(K)$ is small then the function
\[
\lambda_N(\Omega \setminus K)-\lambda_N(\Omega)
\]
is in some sense differentiable with respect to $\mathrm{Cap}_\Omega(K)$. }

A first improvement of Courtois' results has been achieved by \cite{AbFeHiLe2019} and later on by \cite{AbBoLeMu}. In the first paper, for simple eigenvalues the authors
establish a sharp relation between the vanishing order of a Dirichlet
eigenfunction at a point and the leading term of the  asymptotic expansion of the corresponding
Dirichlet eigenvalue variation, as a removed compact set concentrates
at that point. We recall indeed that a point has zero capacity. If, for example, $\lambda_N(\Omega)$ is simple, then one can  replace the capacity $\mathrm{Cap}_\Omega(K)$ by the so-called $u_N$-capacity $\mathrm{Cap}_{\Omega}(K,u_N)$, $u_N$ being a suitable associated eigenfunction (see equation \eqref{eq:ucap} below). In this way one can obtain more refined asymptotic expansions of the difference $\lambda_N(\Omega \setminus K)-\lambda_N(\Omega)$ (on this topic, see also \cite{CoBe06}).
In the second cited paper \cite{AbBoLeMu}, the authors provide
the asymptotic behavior of  $u$-capacities of small sets of type $\eps{\overline{\omega}}$, with $\omega$ a sufficiently regular open bounded connected set of $\R^2$ and $u$  a function analytic in a neighborhood of the origin. Consequently, they are able to deduce asymptotic behaviors for simple eigenvalues when a small set of this type is removed from the original domain. In particular, this analysis sheds some light on the dependence of these asymptotics on the hole's shape $\omega$.

\bigskip

In the present paper we rather consider multiple eigenvalues. We aim at investigating whether multiple eigenvalues can be splitted in different branches if we perturb {the} initial domain by removing a small hole around a point, namely the origin. In \cite[Theorem 1.2]{Courtois1995} Courtois gave a first response in this direction. We recall his result in the following  
\begin{theorem}{\rm \cite[Theorem 1.2]{Courtois1995}}\label{t:courtois1.2}
  Let $X$ be a compact Riemannian manifold with or without boundary of dimension greater than or equal to 2.  Let
  $\lambda:=\lambda_{N}=\ldots=\lambda_{N+k-1}$ be a Dirichlet eigenvalue of
  $X$ with multiplicity $k$.  There exist a function
  $r:\,\R^+\to \R^+$ such that $\lim_{t\to0} r(t)=0$ and  a positive
  constant $\eps_N$, such that, for any compact  subset
  $A$ of $X$, if $\mbox{\rm Cap}_X(A)\le\eps_N$, then
 \[
 |\lambda_{N+j}(X\setminus A)-\lambda_{N+j} - \mbox{\rm Cap}_X (A)\cdot
 \mu_A(u_{N+j}^2)| \le \mbox{\rm Cap}_X (A)\cdot r(\mbox{\rm Cap}_X (A))
\]
 where $\mu_A$ is a finite positive probability measure supported in
 $A$ defined as the renormalized singular part of $-\Delta V_A$ and
 $\{u_{N},\ldots,u_{N+k-1}\}$ is an orthonormal basis of the
 eigenspace of $\lambda$ which diagonalises the quadratic form
 $\mu_A(u^2)$ according to the increasing order of its eigenvalues.\end{theorem}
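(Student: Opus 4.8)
The plan is to reduce the $k$-dimensional cluster of eigenvalues near $\lambda$ to the spectrum of a small $k\times k$ matrix built from the capacitary potential of $A$, to control this reduction by Colin de Verdi\`{e}re's Lemma on small eigenvalues, and then to identify that matrix with $\mathrm{Cap}_X(A)$ times the Gram matrix of the quadratic form $u\mapsto\mu_A(u^2)$ on the eigenspace of $\lambda$. First I would isolate the cluster. Since $H^1_0(X\setminus A)\subseteq H^1_0(X)$ one always has $\lambda_j(X\setminus A)\ge\lambda_j(X)$, and by the continuity of the Dirichlet spectrum under perturbations of small capacity (Rauch and Taylor, together with the quasi-mode upper bound produced below) there is $\eps_N>0$ such that, whenever $\mathrm{Cap}_X(A)\le\eps_N$, the $k$ eigenvalues $\lambda_N(X\setminus A),\dots,\lambda_{N+k-1}(X\setminus A)$ lie in $[\lambda,\lambda+C\eps_N]$, while $\lambda_{N-1}(X\setminus A)$ stays below and $\lambda_{N+k}(X\setminus A)$ above, each separated by a gap bounded below independently of $A$. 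This spectral gap is what the small-eigenvalue lemma consumes.

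Next I would build the quasi-modes. Work in $H^1_0(X)$ with the Dirichlet form $\langle u,v\rangle=\int_X\nabla u\cdot\nabla v$ (the closed case needing only the obvious notational changes), in which $H^1_0(X\setminus A)$ is closed with orthogonal complement $\mathcal H_A$, the space of functions harmonic in $X\setminus A$; let $P_A$ be the orthogonal projection onto $\mathcal H_A$. For an $L^2$-orthonormal basis $u_N,\dots,u_{N+k-1}$ of the eigenspace of $\lambda$ put $w_i:=P_Au_i$, so that $\|w_i\|^2=\mathrm{Cap}_X(A,u_i)$, set $\tilde u_i:=u_i-w_i\in H^1_0(X\setminus A)$, and take analogous quasi-modes $\tilde u_1,\dots,\tilde u_{N-1}$ for the lower eigenvalues. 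Using $-\Delta u_i=\lambda u_i$ and the harmonicity of $w_i$ in $X\setminus A$, a direct computation gives, with $M_{ij}:=\tfrac{1}{\lambda}\langle w_i,w_j\rangle$, the Gram matrices $\int_X\nabla\tilde u_i\cdot\nabla\tilde u_j=\lambda(\delta_{ij}-M_{ij})$ and $\int_X\tilde u_i\tilde u_j=\delta_{ij}-2M_{ij}+\int_Xw_iw_j$ (with the cross blocks of size $O(\mathrm{Cap}_X(A))$), together with the residual identity $(-\Delta-\lambda_i)\tilde u_i=\lambda_iw_i$ in $X\setminus A$, so that each residual has $L^2(X)$-norm of order $\|w_i\|_{L^2(X)}$. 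Colin de Verdi\`{e}re's Lemma on small eigenvalues, applied to $\{\tilde u_1,\dots,\tilde u_{N+k-1}\}$ with the gap above, then yields that $\lambda_{N+j}(X\setminus A)$, $j=0,\dots,k-1$, coincides up to an error of order $\max_i\|w_i\|_{L^2(X)}^2/(\text{gap})$ with the generalized eigenvalues of this pencil; since $\lambda M_{ii}=\mathrm{Cap}_X(A,u_i)$, after diagonalising $M$ (equivalently, the quadratic form $u\mapsto\mathrm{Cap}_X(A,u)$ on the eigenspace) and decoupling the $k\times k$ block this reads $\lambda_{N+j}(X\setminus A)=\lambda+\mathrm{Cap}_X(A,u_{N+j})+(\text{error})$.

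It then remains to evaluate the $u$-capacity. Comparing the minimizer $w_u=P_Au$ with the admissible competitor $uV_A$, where $V_A$ is the capacitary potential of $A$ and $\nu_A:=-\Delta V_A$ the associated equilibrium measure of total mass $\mathrm{Cap}_X(A)$, one computes directly $\|uV_A\|^2=\langle u^2V_A,V_A\rangle+\int_XV_A^2|\nabla u|^2=\int_Au^2\,d\nu_A+\int_XV_A^2|\nabla u|^2=\mathrm{Cap}_X(A)\,\mu_A(u^2)+\int_XV_A^2|\nabla u|^2$, while an integration by parts (using that the relevant functions vanish on $A$) bounds $\|uV_A-w_u\|^2$ by a constant times $\|V_A\|_{L^2(X)}^2$. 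Hence, once one knows $\|V_A\|_{L^2(X)}^2=o(\mathrm{Cap}_X(A))$ as $\mathrm{Cap}_X(A)\to0$, one obtains $\mathrm{Cap}_X(A,u)=\mathrm{Cap}_X(A)\,\mu_A(u^2)+o(\mathrm{Cap}_X(A))$ uniformly for $u$ on the unit sphere of the eigenspace; in particular the quadratic forms $\mathrm{Cap}_X(A,\cdot)$ and $\mathrm{Cap}_X(A)\,\mu_A(\cdot^2)$ agree up to $o(\mathrm{Cap}_X(A))$, so their ordered eigenvalues and (approximate) diagonalising bases match to the same order, and one may state the conclusion with the basis that diagonalises $\mu_A(\cdot^2)$. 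Feeding this back, and noting the residual estimate above is then also $o(\mathrm{Cap}_X(A))$, gives $\lambda_{N+j}(X\setminus A)=\lambda+\mathrm{Cap}_X(A)\,\mu_A(u_{N+j}^2)+o(\mathrm{Cap}_X(A))$, i.e. the claim with a single modulus $r(\cdot)\to0$.

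The crux will be the estimate $\|V_A\|_{L^2(X)}^2=o(\mathrm{Cap}_X(A))$, required \emph{uniformly over all compact $A$}, including those that do not concentrate at a point; it is exactly what makes the quasi-mode residuals $o(\mathrm{Cap}_X(A)^{1/2})$, hence the small-eigenvalue error genuinely $o(\mathrm{Cap}_X(A))$, and the remainder of the $u$-capacity expansion lower order. The natural route is the identity $\|V_A\|_{L^2(X)}^2=\int_A(-\Delta_X)^{-1}V_A\,d\nu_A\le\mathrm{Cap}_X(A)\,\|(-\Delta_X)^{-1}V_A\|_{L^\infty(X)}$, combined with $\|V_A\|_{L^p(X)}\to0$ for some $p>d/2$ (which follows from $0\le V_A\le1$ and $\|V_A\|_{L^1(X)}\le|X|^{1/2}\|V_A\|_{L^2(X)}\le C_X\,\mathrm{Cap}_X(A)^{1/2}$) together with the smoothing $(-\Delta_X)^{-1}\colon L^p(X)\to L^\infty(X)$. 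A secondary difficulty is the bookkeeping inside the small-eigenvalue lemma: one must check that the block of the pencil corresponding to the indices $N,\dots,N+k-1$ decouples from the rest up to $o(\mathrm{Cap}_X(A))$ and that all thresholds ($\eps_N$, the gap, the constants in the error) depend only on $\mathrm{Cap}_X(A)$ and on $X$, never on the shape or location of $A$.
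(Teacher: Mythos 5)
The paper does not prove Theorem~\ref{t:courtois1.2}; it quotes it from Courtois \cite{Courtois1995} and then redevelops the same machinery in Section~3 and Appendices~\ref{app:smallEVs}--\ref{app:appEVs} to prove the generalization Theorem~\ref{thm:approxEVs}, where the standard capacity is replaced by the $u$-capacity. Your proposal reproduces exactly that route (and Courtois's original one): project the eigenbasis onto $H^1_0(X\setminus A)^\perp$ to build quasi-modes, feed them into Colin de Verdi\`ere's small-eigenvalue lemma (the paper's Proposition~\ref{p:appEV}) after establishing the gap condition, reduce to the $k\times k$ Gram matrix of the $(u,v)$-capacity form, and then identify that form with $\mathrm{Cap}_X(A)\,\mu_A(u^2)$ up to $o(\mathrm{Cap}_X(A))$ by comparing the minimizer $w_u$ with the competitor $uV_A$ and invoking $\|V_A\|_{L^2}^2=o(\mathrm{Cap}_X(A))$ (the paper's Lemma~\ref{l:a1}). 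The argument is correct; the only step you leave somewhat compressed is the bound $\|uV_A-w_u\|^2\lesssim\|V_A\|_{L^2}^2$, which does hold but needs \emph{two} integrations by parts (a single one leaves a term of size $\|\nabla V_A\|_{L^2}\,\|\zeta\|_{L^2}=O(\mathrm{Cap}_X(A)^{1/2})\|\zeta\|_{L^2}$, which is too weak; pushing the derivative off $V_A$ once more converts it into a $\|V_A\|_{L^2}$ factor).
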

Theorem \ref{t:courtois1.2} above provides a sharp
asymptotic expansion  of $\lambda_{N+j}(X\setminus A) -\lambda_{N+j}$ as $\mbox{\rm Cap}_X(A)\to 0$ only
if $\mu_A(u_{N+j}^2)\not\to0$, but in general it is just an estimate for $\lambda_{N+j}(X\setminus A) - \lambda_{N+j}$ 
when
$\mu_A(u_{N+j}^2)\to0$.
Moreover, even when $\mu_A(u_{N+j}^2)\not\to0$ the possible splitting of a multiple eigenvalue is hidden in the involved quadratic form $\mu_A(u^2)$. (We note that in the present paper, with an abuse of notation, we denote by the same symbol a bilinear form and the corresponding quadratic form.)

To introduce our main results, we need to specify the functional context where we are working.

\subsection{Functional setting and basic problem}

Let us give a more convenient reformulation of the eigenvalue problem \eqref{eq:eige}. We consider the quadratic form defined by
\begin{equation}\label{eq:qf}
	q_\Omega(u)\equiv\int_\Omega |\nabla u|^2 \, dx
\end{equation}
for $u\in H_0^1(\Omega)$. We denote by $\langle\cdot\,,\cdot\rangle$ the usual scalar product in $L^2(\Omega)$ and by $\|\cdot\|$ the associated norm. It is well known that the spectrum  of $q_\Omega$ with respect to $\langle\cdot\,,\cdot\rangle$ consists of positive eigenvalues of finite multiplicity tending to infinity. We denote the sequence of these eigenvalues, repeated according to their multiplicity, by $\{\lambda_i(\Omega)\}_{i\ge1}$. 

We are interested in the effect on the eigenvalues of imposing an additional Dirichlet condition on a small compact set $K\subseteq\Omega$. This can intuitively be described as creating a small hole $K$ in $\Omega$. To make our meaning clear, let us recall the following definition from \cite{AbFeHiLe2019}.

\begin{defn} Let $K\subseteq\Omega$ be compact and let $\{K_\eps\}_{\eps>0}$ be a family of compact sets contained in $\Omega$. We say that $\{K_\eps\}$ \emph{concentrates to $K$} when, for every open set $U$ satisfying $K\subseteq U\subseteq\Omega$, there exists $\eps_U>0$ such that, for all $\eps<\eps_U$, $K_\eps\subseteq U$. 
\end{defn}
We now fix $K\subseteq \Omega$ a compact set, such that $\mathrm{Cap}_\Omega(K)=0$, and $\{K_\eps\}_{\eps>0}$ a family of compact sets concentrating to $K$. We define, for $\eps>0$, the perforated domain $\Omega_\eps\equiv\Omega\setminus K_\eps$.  The following result is easy to prove. 
\begin{prop} \label{prop:convEV} For all $i\ge1$, $\lambda_i(\Omega_\eps)\to\lambda_i(\Omega\setminus K)$ as $\eps\to0$ and $\lambda_i(\Omega\setminus K)=\lambda_i(\Omega)$.
\end{prop}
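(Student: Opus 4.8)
The plan is to establish the two assertions separately: the equality $\lambda_i(\Omega\setminus K)=\lambda_i(\Omega)$ is a removability statement for the null-capacity set $K$, while the convergence $\lambda_i(\Omega_\eps)\to\lambda_i(\Omega\setminus K)$ is a min--max argument resting on the fact that $K_\eps$ has arbitrarily small capacity for $\eps$ small.

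For the equality I would show that $H^1_0(\Omega\setminus K)=H^1_0(\Omega)$ whenever $\mathrm{Cap}_\Omega(K)=0$. The inclusion $H^1_0(\Omega\setminus K)\subseteq H^1_0(\Omega)$ is trivial (extension by zero). For the reverse inclusion, $\mathrm{Cap}_\Omega(K)=0$ furnishes cut-off functions $\varphi_n\in H^1_0(\Omega)$ with $0\le\varphi_n\le1$, $\varphi_n\equiv1$ on an open neighbourhood $W_n$ of $K$, supports contained in a fixed compact subset of $\Omega$, and $\int_\Omega|\nabla\varphi_n|^2\,dx\to0$ (so $\varphi_n\to0$ in $H^1_0(\Omega)$ by the Poincar\'e inequality); for $u\in H^1_0(\Omega)$, after a preliminary truncation reducing to $u\in L^\infty$, the functions $u(1-\varphi_n)$ lie in $H^1_0(\Omega\setminus K)$ and converge to $u$ in $H^1_0(\Omega)$. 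Then $q_\Omega$ and $q_{\Omega\setminus K}$ are literally the same quadratic form on the same Hilbert space, and the min--max characterisation of the eigenvalues gives $\lambda_i(\Omega\setminus K)=\lambda_i(\Omega)$ for every $i$. (Alternatively one simply invokes the removability result of Rauch and Taylor \cite{RaTa75}.)

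For the convergence, the lower bound is immediate from domain monotonicity: $\Omega_\eps\subseteq\Omega$ forces $H^1_0(\Omega_\eps)\subseteq H^1_0(\Omega)$, hence $\lambda_i(\Omega_\eps)\ge\lambda_i(\Omega)=\lambda_i(\Omega\setminus K)$ for all $\eps$, so $\liminf_{\eps\to0}\lambda_i(\Omega_\eps)\ge\lambda_i(\Omega\setminus K)$. For the upper bound I would first observe that $\mathrm{Cap}_\Omega(K_\eps)\to0$: with the cut-offs $\varphi_n$ above, the concentration hypothesis applied to $U=W_n$ gives $K_\eps\subseteq W_n$ for $\eps$ small, so $\varphi_n$ is admissible in the definition of $\mathrm{Cap}_\Omega(K_\eps)$ and $\limsup_{\eps\to0}\mathrm{Cap}_\Omega(K_\eps)\le\int_\Omega|\nabla\varphi_n|^2\,dx\to0$. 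Fixing now an $L^2$-orthonormal family $u_1,\dots,u_i$ of eigenfunctions of $q_\Omega$ for $\lambda_1(\Omega),\dots,\lambda_i(\Omega)$, the functions $u_j(1-\varphi_n)$ belong to $H^1_0(\Omega_\eps)$ for $\eps$ small (their support avoids a neighbourhood of $K_\eps$) and converge to $u_j$ in $H^1_0(\Omega)$ as $n\to\infty$; they are therefore linearly independent for $n$ large, and on their $i$-dimensional span the Rayleigh quotient is $\le\lambda_i(\Omega)+o(1)$ uniformly as $n\to\infty$. Choosing $n$ to make this bound $\le\lambda_i(\Omega)+\delta$ and then $\eps$ correspondingly small, the min--max principle yields $\lambda_i(\Omega_\eps)\le\lambda_i(\Omega)+\delta$, whence $\limsup_{\eps\to0}\lambda_i(\Omega_\eps)\le\lambda_i(\Omega\setminus K)$, completing the argument.

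The only genuinely delicate points are the two-parameter bookkeeping --- the cut-off scale $n$ must be sent to infinity together with, but more slowly than, $\eps\to0$ --- and the technical fact that the cut-offs realising $\mathrm{Cap}_\Omega(K)=0$ can be chosen with support in a fixed compact subset of $\Omega$, where the (truncated) eigenfunctions are bounded in $H^1$, so that the nuisance term $u_j\nabla\varphi_n$ tends to $0$ in $L^2$; both are standard facts about capacity and interior elliptic regularity. A more economical route, once $\mathrm{Cap}_\Omega(K_\eps)\to0$ has been established, is to quote directly the continuity of the eigenvalues with respect to small capacity of Rauch and Taylor \cite{RaTa75} (or its quantitative refinement in \cite{Courtois1995}), which subsumes the entire content of the previous paragraph.
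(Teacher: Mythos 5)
Your proposal is correct, and it follows exactly the route the paper intends: the paper gives no proof of this proposition, declaring it ``easy to prove'' and relying on the Rauch--Taylor results recalled in the introduction (removability of zero-capacity compact sets for $H^1_0$, hence for the spectrum, and continuity of the eigenvalues with respect to small capacity), which is precisely the content of your two steps. Your self-contained version (capacity cut-offs $\varphi_n$, domain monotonicity for the lower bound, the test functions $u_j(1-\varphi_n)$ with the correct order of quantifiers ``first $n$, then $\eps$'' for the upper bound) is a standard and complete implementation of that argument.
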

To simplify the notation, we set $\lambda_i\equiv\lambda_i(\Omega)$ and $\lambda_i^\eps\equiv\lambda_i(\Omega_\eps)$.
We wish to estimate the difference $\lambda_i^\eps-\lambda_i$. To that end, we recall a definition from \cite{AbFeHiLe2019}, inspired by \cite{CoBe06,Courtois1995}.
\begin{defn}\label{def:ucap} Given a function $u\in H^1_0(\Omega)$, the $u$-capacity of a compact set $K\subseteq \Omega$ is 
 \begin{equation} \label{eq:ucap}
\mathrm{Cap}_{\Omega}(K,u)\equiv \mathrm{inf} \bigg\{\int_{\Omega} |\nabla f|^2\, dx \colon f \in H^1_0(\Omega)\ \mathrm{and}\ f-u \in H^1_0(\Omega \setminus K)\bigg\}\,. 
\end{equation}
The infimum in \eqref{eq:ucap} is achieved by a unique function $V_{K,u}\in H_0^1(\Omega)$, so that
\[
\mathrm{Cap}_\Omega(K,u)=\int_{\Omega}|\nabla V_{K,u}|^2 \, dx\, .
\]
We call $V_{K,u}$ the \emph{potential} associated with $u$ and $K$.

Furthermore, $V_{K,u}$ is the unique weak solution of  the Dirichlet problem
\begin{equation}\label{eq:dircapKu}
\left\{
\begin{array}{ll}
\Delta V_{K,u}=0&\text{ in }\Omega \setminus K\,,\\
V_{K,u}=0&\text{ on } \partial\Omega\,,\\
V_{K,u}=u &\text{ on }K\,,
\end{array}
\right.
\end{equation}
where, by \emph{weak solution}, we mean  that $V_{K,u}\in H^1_0(\Omega)$, $u-V_{K,u}\in H_0^1(\Omega\setminus K)$ and $\int_\Omega \nabla V_{K,u}\cdot\nabla \varphi=0$ for all $\varphi\in H_0^1(\Omega\setminus K)$. 
\end{defn}

\begin{rem} Let us note that the Sobolev space $H^1_0(\Omega\setminus K)$ can be seen as a subspace of $H^1_0(\Omega)$. Indeed, the quadratic form $q_\Omega$ of Equation \eqref{eq:qf} endows $H_0^1(\Omega)$ with a Hilbert space norm. The space $H^1_0(\Omega\setminus K)$ can then be identified with the closure of the subspace $C_c^\infty(\Omega\setminus K)$, consisting of smooth functions compactly supported in $\Omega\setminus K$. More explicitly, this identification associates to $f\in H^1_0(\Omega\setminus K)$ its extension $g$ to $\Omega$ obtained by setting $g=f$ in $\Omega\setminus K$ and $g=0$ in $K$.  We systematically perform this identification in the paper, and in particular we use the same notation for an element of $H^1_0(\Omega\setminus K)$ and the element of $H^1_0(\Omega)$ obtained after extension.
\end{rem}

\begin{rem} \label{rem:geom} Definition \ref{def:ucap} has a geometric interpretation.  According to the previous remark, $H_0^1(\Omega\setminus K)$ can be seen as a subspace of $H_0^1(\Omega)$, closed for the Hilbert space norm given by the quadratic form $q_\Omega$.
 Then, $\mathrm{Cap}_\Omega(K,u)$ is the square of the $q_\Omega$-distance of $u$ to $H_0^1(\Omega\setminus K)$ and $u-V_{K,u}$ is the $q_\Omega$-orthogonal projection of $u$. The existence and uniqueness of $V_{K,u}$, as well as its characterization as the unique weak solution of \eqref{eq:dircapKu}, follow immediately from the properties of the orthogonal projection on a closed subspace of a Hilbert space.

Note also that Definition \eqref{eq:ucap} can be extended to $H^1(\Omega)$ functions, by setting, for any $u \in H^1(\Omega)$, \[\mathrm{Cap}_\Omega(K,u) \equiv \mathrm{Cap}_\Omega(K,\eta_K u)\] where $\eta_K$ is a fixed smooth function such that $\mathrm{supp}\, \eta_K \subseteq \Omega$ and $\eta_K \equiv 1$ in a neighborhood of $K$.
\end{rem}
In order to study multiple eigenvalues, we will need the following generalization of the $u$-capacity.
\begin{defn}\label{def:uvcap} Given $u,v\in H_0^1(\Omega)$, the $(u,v)$-capacity of a compact set $K\subseteq \Omega$ is 
\begin{equation*}
	\mathrm{Cap}_\Omega(K,u,v)\equiv\int_\Omega\nabla V_{K,u}\cdot\nabla V_{K,v}\, dx.
\end{equation*}
\end{defn}
\begin{rem} \label{rem:geombis} As it is done in Remark \ref{rem:geom}, we extend Definition \ref{def:uvcap} of $\mathrm{Cap}_\Omega(K,u,v)$ to functions $u,v\in H^1(\Omega)$, by setting 
\[\mathrm{Cap}_\Omega(K,u,v) \equiv \mathrm{Cap}_\Omega(K,\eta_K u, \eta_K v)
\] where $\eta_K$ is a fixed smooth function such that $\mathrm{supp}\, \eta_K \subseteq \Omega$ and $\eta_K \equiv 1$ in a neighborhood of $K$.
\end{rem}

Let us now fix an eigenvalue $\lambda_N$, of multiplicity $m$, for Problem \eqref{eq:eige}. Let us denote by $E(\lambda_N)$ the associated eigenspace. According to Proposition \ref{prop:convEV}, for $i\in\{1,\dots,m\}$,
\begin{equation*}
	\lambda_{N+i-1}^\eps\to \lambda_N \mbox{ as } \eps\to0.
\end{equation*} 
We therefore have $m$ eigenvalue branches departing from the multiple eigenvalue $\lambda_N$. We will prove the following result about their asymptotic behavior.
\begin{theorem}\label{thm:approxEVs} For $i\in\{1,\dots,m\}$,
\begin{equation}\label{eq:asymptEV}
	\lambda_{N+i-1}^\eps=\lambda_N+\mu_i^\eps+o(\chi_\eps^2) \mbox{ as }\eps\to0,
\end{equation}
where
\begin{equation*}
	\chi_\eps^2\equiv\sup\{\mathrm{Cap}_\Omega(K_\eps,u)\,:\,u\in E(\lambda_N)\mbox{ and } \|u\|=1\}
\end{equation*}
and $\{\mu_i^\eps\}_{i=1}^m$ are the eigenvalues of the quadratic form $r_\eps$ defined, for $u,v\in E(\lambda_N)$, by
\begin{equation*}
	r_\eps(u,v)\equiv\int_\Omega\nabla V_{K_\eps,u}\cdot\nabla V_{K_\eps,v}\, dx-\lambda_N\int_\Omega V_{K_\eps,u}V_{K_\eps,v}\, dx.
\end{equation*}
\end{theorem}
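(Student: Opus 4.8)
The plan is to apply Colin de Verdière's lemma on small eigenvalues to the quadratic form $q_{\Omega_\eps}$ on $H^1_0(\Omega_\eps)\subseteq L^2(\Omega_\eps)$, using as an $m$‑dimensional trial space the image $F_\eps:=\{\hat u:=u-V_{K_\eps,u}\,:\,u\in E(\lambda_N)\}$ of the eigenspace under the $q_\Omega$‑orthogonal projection onto $H^1_0(\Omega\setminus K_\eps)$; by Definition \ref{def:ucap} this lies in $H^1_0(\Omega_\eps)$, and it is $m$‑dimensional for $\eps$ small. First I would record two elementary identities for $u,v\in E(\lambda_N)$: testing the weak formulation \eqref{eq:dircapKu} of $V_{K_\eps,v}$ against $\hat u\in H^1_0(\Omega\setminus K_\eps)$ gives $\int_\Omega\nabla u\cdot\nabla V_{K_\eps,v}\,dx=\int_\Omega\nabla V_{K_\eps,u}\cdot\nabla V_{K_\eps,v}\,dx=\mathrm{Cap}_\Omega(K_\eps,u,v)$, and since $-\Delta u=\lambda_N u$ this also yields $\langle u,V_{K_\eps,v}\rangle=\lambda_N^{-1}\mathrm{Cap}_\Omega(K_\eps,u,v)$. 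Expanding the two forms on $F_\eps$ with these identities produces
\begin{equation*}
q_{\Omega_\eps}(\hat u,\hat v)=\lambda_N\langle u,v\rangle-\mathrm{Cap}_\Omega(K_\eps,u,v),\qquad \langle\hat u,\hat v\rangle=\langle u,v\rangle-2\lambda_N^{-1}\mathrm{Cap}_\Omega(K_\eps,u,v)+\langle V_{K_\eps,u},V_{K_\eps,v}\rangle,
\end{equation*}
hence the \emph{exact} relation $q_{\Omega_\eps}(\hat u,\hat v)-\lambda_N\langle\hat u,\hat v\rangle=r_\eps(u,v)$, together with $\langle\hat u,\hat v\rangle=\langle u,v\rangle+O(\chi_\eps^2)$. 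Moreover, for any $\psi\in H^1_0(\Omega_\eps)$ one computes $q_{\Omega_\eps}(\hat u,\psi)-\lambda_N\langle\hat u,\psi\rangle=\lambda_N\langle V_{K_\eps,u},\psi\rangle$ (using $\int_\Omega\nabla V_{K_\eps,u}\cdot\nabla\psi=0$ and that $u$ is an eigenfunction), so that the residual of $\hat u$ as an approximate eigenfunction of $q_{\Omega_\eps}$ has $L^2(\Omega_\eps)$‑norm $\lambda_N\|V_{K_\eps,u}\|_{L^2}$.

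The crucial quantitative input is then the estimate $\|V_{K_\eps,u}\|^2_{L^2(\Omega)}=o(\chi_\eps^2)$ as $\eps\to0$, uniformly for $u$ in the unit sphere of $E(\lambda_N)$ — note that the bare Poincaré inequality gives only $O(\chi_\eps^2)$, which would not suffice. I would prove this by compactness: the normalized potentials $W_\eps:=V_{K_\eps,u}/\|\nabla V_{K_\eps,u}\|_{L^2}$ are bounded in $H^1_0(\Omega)$, and since $K_\eps$ concentrates to $K$, for every $\varphi\in C^\infty_c(\Omega\setminus K)$ one has $\varphi\in H^1_0(\Omega\setminus K_\eps)$ for $\eps$ small, whence $\int_\Omega\nabla W_\eps\cdot\nabla\varphi\,dx=0$; any weak $H^1_0$‑limit $W$ of $W_\eps$ thus satisfies $\int_\Omega\nabla W\cdot\nabla\varphi\,dx=0$ for all such $\varphi$, and since $\mathrm{Cap}_\Omega(K)=0$ the space $C^\infty_c(\Omega\setminus K)$ is dense in $H^1_0(\Omega)$, forcing $W=0$. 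By Rellich's theorem $\|W_\eps\|_{L^2}\to0$, i.e. $\|V_{K_\eps,u}\|^2_{L^2}=o\bigl(\mathrm{Cap}_\Omega(K_\eps,u)\bigr)=o(\chi_\eps^2)$; the uniformity over $E(\lambda_N)$ follows from its finite dimension. In particular $\chi_\eps\to0$. Combined with Proposition \ref{prop:convEV}, which gives $\lambda_{N+i-1}^\eps\to\lambda_N$ while $\lambda_{N-1}^\eps\to\lambda_{N-1}<\lambda_N$ and $\lambda_{N+m}^\eps\to\lambda_{N+m}>\lambda_N$, there is a fixed $\delta>0$ such that, for $\eps$ small, the interval $(\lambda_N-\delta,\lambda_N+\delta)$ contains exactly the $m$ eigenvalues $\lambda_N^\eps,\dots,\lambda_{N+m-1}^\eps$ of $q_{\Omega_\eps}$, with a spectral gap of size $\ge\delta/2$ to the remaining spectrum.

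With these ingredients I would invoke the lemma on small eigenvalues: by the computation above $q_{\Omega_\eps}$ restricted to $F_\eps$ differs from $\lambda_N\|\cdot\|^2$ by $O(\chi_\eps^2)$, and by the residual identity and the key estimate $F_\eps$ is $o(\chi_\eps)$‑close, in the graph norm, to the spectral subspace of $q_{\Omega_\eps}$ associated with $\{\lambda_N^\eps,\dots,\lambda_{N+m-1}^\eps\}$; together with the spectral gap this yields $\lambda_{N+i-1}^\eps=\nu_i^\eps+o(\chi_\eps^2)$ for $i=1,\dots,m$, where $\nu_1^\eps\le\dots\le\nu_m^\eps$ are the eigenvalues of the generalized eigenvalue problem for the pair $\bigl(q_{\Omega_\eps}(\hat\cdot,\hat\cdot),\langle\hat\cdot,\hat\cdot\rangle\bigr)$ on $E(\lambda_N)$, the $o(\chi_\eps^2)$ error arising as (residual)$^2/$(gap). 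Finally, in an orthonormal basis of $(E(\lambda_N),\langle\cdot,\cdot\rangle)$ this pair is represented by matrices $A_\eps=\lambda_N B_\eps+R_\eps$ and $B_\eps=I+O(\chi_\eps^2)$, with $R_\eps=O(\chi_\eps^2)$ the matrix of $r_\eps$, so the $\nu_i^\eps$ equal $\lambda_N$ plus the eigenvalues of $B_\eps^{-1/2}R_\eps B_\eps^{-1/2}=R_\eps+O(\chi_\eps^4)$; by Weyl's inequality $\nu_i^\eps=\lambda_N+\mu_i^\eps+O(\chi_\eps^4)$, and \eqref{eq:asymptEV} follows. I expect the main obstacle to be the interplay of the two delicate points: the $L^2$‑decay estimate $\|V_{K_\eps,u}\|^2_{L^2}=o(\chi_\eps^2)$ — the only place where $\mathrm{Cap}_\Omega(K)=0$ and the concentration hypothesis are genuinely exploited, and what upgrades the remainder from $O(\chi_\eps^2)$ to $o(\chi_\eps^2)$ — and a sufficiently sharp, second‑order application of Colin de Verdière's lemma that converts the small residual into an $o(\chi_\eps^2)$, rather than merely $O(\chi_\eps^2)$, control on each eigenvalue branch.
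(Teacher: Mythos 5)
Your proposal is correct and follows essentially the same strategy as the paper: you apply the Colin de Verdière lemma on small eigenvalues to $q_{\Omega_\eps}-\lambda_N\langle\cdot,\cdot\rangle$ with trial space $F_\eps=\Pi_\eps(E(\lambda_N))$, derive the exact identity $q_{\Omega_\eps}(\hat u,\hat v)-\lambda_N\langle\hat u,\hat v\rangle=r_\eps(u,v)$ (the paper's Lemma~\ref{l:restrict}), control the residual by $\lambda_N\|V_{K_\eps,u}\|_{L^2}=o(\chi_\eps)$ (the paper's Lemmas~\ref{l:a1} and \ref{l:norm}), and finally compare the eigenvalues of the restricted form with those of $r_\eps$ via a perturbed Gram-matrix argument as in Appendix~\ref{app:appEVs}. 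The only deviations are cosmetic: you supply a self-contained compactness/Rellich proof of the decay estimate that the paper cites from \cite{AbFeHiLe2019}, and you track the matrix error as $O(\chi_\eps^4)$ rather than merely $o(\chi_\eps^2)$, which is slightly sharper than needed but harmless.
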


Theorem \ref{thm:approxEVs} is a generalization of \cite[Theorem 1.4]{AbFeHiLe2019}, the latter applying only to simple eigenvalues. It is similar to \cite[Theorem 1.2]{Courtois1995}, which involves the standard capacity rather than the $u$-capacity. However, Theorem \ref{thm:approxEVs} suffers from several limitations.  First of all, the quantities $\chi_\eps$, $\{\mu_i^\eps\}$ can be difficult to compute explicitly. Furthermore, it can happen that, for some $i$, $\mu_i^\eps=o(\chi_\eps^2)$, in which case \eqref{eq:asymptEV} reduces to the estimate $\lambda_{N+i-1}^\eps-\lambda_N=o(\chi_\eps^2)$. To progress further, we will focus on regular compact sets concentrating to a point, describe the local behavior of eigenfunctions, and determine asymptotic expansions for the $(u,v)$-capacity. For definiteness, and in view of concrete applications,  we will give the final result in the two-dimensional case ($d=2$). However, our method could be extended to higher dimensions.

\subsection{Refinement using analyticity}

In this section, we fix a point $x_0$ in $\Omega\subseteq\R^d$. Without loss of generality, we can assume $x_0=0$.
Let us recall some basic facts about Laplacian eigenfunctions. Let $\lambda$ be an eigenvalue of Problem \eqref{eq:eige} and $u$ an associated eigenfunction. Since the differential operator $-\Delta-\lambda$ is elliptic with analytic coefficients, $u$ is real-analytic in $\Omega$ (see, {\it e.g.}, \cite[Theorem 7.5.1]{Ho69}). Using the standard multi-index notation, this means in particular that
\begin{equation*}
	u(x)=\sum_{ \beta\in \N^d}\frac1{ \beta!}D^{ \beta} u(0)x^{ \beta}
\end{equation*} 
for $|x|$ small enough. This can be rewritten as 
\begin{equation*}
	u(x)=\sum_{k\in\N}P_k(x)
\end{equation*} 
where 
\begin{equation*}
	P_k(x)\equiv\sum_{ |\beta|=k}\frac1{ \beta!}{ D^{ \beta} u(0)}x^{ \beta}
\end{equation*}
is a homogeneous polynomial of degree $k$ in $d$ variables. We call the smallest $k$ such that $P_k\neq0$ the \emph{order of vanishing} (or more briefly the \emph{order}) of $u$ and we denote it by $\kappa(u)$. We denote the polynomial $P_{\kappa(u)}$ by $u_\#$ and call it the \emph{principal part} of $u$. Note that these definitions make sense whenever $u$ is  real-analytic in a neighborhood of $0$, and we will use them as soon as this condition is satisfied. When $u$ is an eigenfunction, we have an additional property: it follows immediately from the eigenvalue equation $\Delta u+\lambda u=0$ that $\Delta u_\#=0$ (i.e. the homogeneous polynomial $u_\#$ is harmonic).

We now return to the situation described in the previous section, with $\lambda_N$ an eigenvalue of multiplicity $m$ for Problem \eqref{eq:eige} and $E(\lambda_N)$ the associated eigenspace. We will use the following results, proved in Appendix \ref{app:DecompES}.

\begin{prop}\label{prop:DecompES} There exists a decomposition of $E(\lambda_N)$ into a sum of orthogonal subspaces
\[E(\lambda_N)=E_1\oplus\dots\oplus E_p\] 
and an associated finite decreasing sequence of integers
\[k_1>\dots>k_p\ge0\]
such that, for all $1\le j \le p$, a function in $E_j\setminus\{0\}$ has the order of vanishing $k_j$ at $0$. In addition, such a decomposition is unique. We call it the \emph{order decomposition} of $E(\lambda_N)$. 
\end{prop}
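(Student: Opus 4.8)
The plan is to realise $E(\lambda_N)$ as a filtration by the order of vanishing at $0$, to read off the $E_j$ as the successive orthogonal complements in that filtration, and to establish uniqueness from the behaviour of the vanishing order under sums. The only non-elementary ingredient is that every nonzero eigenfunction has \emph{finite} order of vanishing at $0$: this is where real-analyticity of eigenfunctions (recalled above) enters, via unique continuation, since a function analytic near an interior point and vanishing to infinite order there is identically zero on the connected component containing that point. (If $\Omega$ is disconnected one reads ``order at $0$'' relative to the component of $0$; nothing else changes.)

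\emph{Existence.} For $k\in\N$ put $F_k:=\{u\in E(\lambda_N):D^\beta u(0)=0\ \text{for }|\beta|<k\}$, equivalently $F_k=\{u\in E(\lambda_N):\kappa(u)\ge k\}\cup\{0\}$. Each $F_k$ is a linear subspace, being an intersection of kernels of the linear maps $u\mapsto D^\beta u(0)$, and $E(\lambda_N)=F_0\supseteq F_1\supseteq\cdots$ with $\bigcap_k F_k=\{0\}$ by the finiteness of $\kappa$; since $\dim E(\lambda_N)=m$, only finitely many inclusions are strict, and we let $k_1>\cdots>k_p\ge0$ be the integers with $F_{k_j}\supsetneq F_{k_j+1}$. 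A jump occurs at $k$ precisely when $k$ is the order of some nonzero eigenfunction, so $\{k_1,\dots,k_p\}$ is exactly the set of attained orders; moreover $F_{k_p}=F_0=E(\lambda_N)$, $F_{k_1+1}=\{0\}$, and $F_k$ is constant between consecutive jumps. Now set $E_j:=F_{k_j}\cap F_{k_j+1}^{\perp}$, the orthogonal complement being taken for $\langle\cdot\,,\cdot\rangle$ (equivalently for $q_\Omega$, since $q_\Omega(u,v)=\lambda_N\langle u,v\rangle$ on $E(\lambda_N)$). From the orthogonal splittings $F_{k_j}=E_j\oplus F_{k_j+1}$, telescoped using $F_{k_p}=E(\lambda_N)$, $F_{k_1+1}=\{0\}$ and the constancy of $F_k$ between jumps, one obtains the orthogonal decomposition $E(\lambda_N)=E_1\oplus\cdots\oplus E_p$. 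Finally, if $u\in E_j\setminus\{0\}$ then $u\in F_{k_j}$ gives $\kappa(u)\ge k_j$, while $u\in F_{k_j+1}^{\perp}\setminus\{0\}$ gives $u\notin F_{k_j+1}$, hence $\kappa(u)\le k_j$; so $\kappa(u)=k_j$, as required.

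\emph{Uniqueness.} The key observation is that if $u,v$ are analytic near $0$ with $\kappa(u)\ne\kappa(v)$ then $\kappa(u+v)=\min\{\kappa(u),\kappa(v)\}$, since the homogeneous component of $u+v$ of that degree equals the principal part of the summand of lower order and is therefore nonzero; iterating, $\kappa\bigl(\sum_{i\in S}u_i\bigr)=\min_{i\in S}\kappa(u_i)$ whenever the $u_i$ are nonzero with pairwise distinct orders. Suppose $E(\lambda_N)=E_1'\oplus\cdots\oplus E_q'$ is another decomposition as in the statement, with integers $\ell_1>\cdots>\ell_q\ge0$. Expanding an arbitrary nonzero $u$ along this decomposition and applying the observation shows that the orders attained on $E(\lambda_N)\setminus\{0\}$ are exactly $\ell_1,\dots,\ell_q$, so $q=p$ and $\ell_j=k_j$; furthermore $u\in F_{k_j}$ if and only if its expansion involves only the summands of index $\le j$, that is $F_{k_j}=E_1'\oplus\cdots\oplus E_j'$ (an orthogonal sub-sum), and likewise $F_{k_j+1}=E_1'\oplus\cdots\oplus E_{j-1}'$. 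Taking the orthogonal complement of $F_{k_j+1}$ inside $F_{k_j}$ then gives $E_j'=F_{k_j}\cap F_{k_j+1}^{\perp}=E_j$, which is the asserted uniqueness.

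I expect everything here to be routine linear algebra once the set-up is in place; the two steps carrying the actual content are the appeal to unique continuation, which guarantees that the filtration $(F_k)$ reaches $\{0\}$ and hence that $\bigoplus_j E_j$ exhausts $E(\lambda_N)$, and the ``order of a sum equals the minimal order'' lemma, without which one could not pin down the subspaces $E_j'$ in the uniqueness argument.
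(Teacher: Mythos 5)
Your proof is correct and follows essentially the same route as the paper's: both build the filtration by vanishing order at $0$ (your $F_k$ is the paper's $N_{k-1}$), use analyticity to ensure the filtration terminates, define $E_j$ as successive orthogonal complements, and derive uniqueness from the fact that any admissible decomposition reproduces the same filtration. Your explicit statement of the ``order of a sum equals the minimal order'' lemma is a point the paper leaves implicit, but the argument is the same.
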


\begin{prop}\label{prop:MaxDim} Let $E=E_1\oplus\dots\oplus E_p$, be the order decomposition of Proposition \ref{prop:DecompES}. Then the dimension of $E_j$ is at most the dimension of the space of spherical harmonics in $d$ variables of degree $k_j$ (see, {\it e.g.}, \cite[pp. 159--165]{Berger1971}). Explicitly,
\[\mbox{dim}(E_{j})\le \binom{k_j + {
 d} - 2}{k_j} + \binom{k_j + {d} - 3}{k_j-1}.
\] 
\end{prop}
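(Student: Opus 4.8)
The plan is to construct, for each $j\in\{1,\dots,p\}$, an injective linear map from $E_j$ into the space $\mathcal H_{k_j}$ of harmonic homogeneous polynomials of degree $k_j$ in $d$ variables. Since the restriction map to $\Sd$ identifies $\mathcal H_{k_j}$ with the space of spherical harmonics of degree $k_j$, whose dimension is $\binom{k_j+d-2}{k_j}+\binom{k_j+d-3}{k_j-1}$ (see, e.g., \cite[pp.~159--165]{Berger1971}), such a map immediately yields the claimed bound $\dim(E_j)\le\dim\mathcal H_{k_j}$. The natural candidate is the assignment $u\mapsto u_\#$ of the principal part.

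First I would observe that, although the assignment of the principal part is not linear on the space of germs of real-analytic functions at $0$ (the order of vanishing may drop under addition), its restriction to $E_j$ \emph{is} linear — this is the key point, and essentially the only one needing care. Indeed, for any function $f$ real-analytic near $0$, the degree-$k$ homogeneous Taylor component $P_k(f)$ depends linearly on $f$; and by Proposition \ref{prop:DecompES} every nonzero $u\in E_j$ vanishes to order exactly $k_j$, so that $P_\ell(u)=0$ for $\ell<k_j$ and $u_\#=P_{k_j}(u)$. Hence $\Phi_j\equiv P_{k_j}|_{E_j}$ is a linear map, which agrees with $u\mapsto u_\#$ on $E_j\setminus\{0\}$ and satisfies $\Phi_j(0)=0$.

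It then remains to check that $\Phi_j$ takes values in $\mathcal H_{k_j}$ and is injective. By construction $\Phi_j(u)$ is a homogeneous polynomial of degree $k_j$, and it is harmonic because $\Delta u_\#=0$ — as already recalled above, this follows by comparing the lowest-order homogeneous terms in the eigenvalue equation $\Delta u+\lambda_N u=0$. Injectivity is immediate from Proposition \ref{prop:DecompES}: if $u\in E_j$ and $\Phi_j(u)=0$, then $u$ does not vanish to order $k_j$, so $u$ must be the zero function. Therefore $\dim(E_j)=\dim\Phi_j(E_j)\le\dim\mathcal H_{k_j}$, and inserting the classical dimension of $\mathcal H_{k_j}$ gives the stated inequality. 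The degenerate cases $k_j\in\{0,1\}$ need no separate treatment, since there $\Delta u_\#=0$ holds trivially and the dimension formula still applies; so I expect no real obstacle beyond the linearity observation of the previous paragraph.
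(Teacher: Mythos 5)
Your proposal is correct and follows essentially the same route as the paper: both map $E_j$ into the space of harmonic homogeneous polynomials of degree $k_j$ via $u\mapsto u_\#$ and deduce injectivity from the fact that nonzero elements of $E_j$ vanish to order exactly $k_j$. The one addition you make — explicitly noting that while $u\mapsto u_\#$ is not linear on general analytic germs, it coincides on $E_j$ with the linear truncation $P_{k_j}$ — is a useful clarification, though the paper sidesteps it by simply defining the map via the manifestly linear Taylor-coefficient formula.
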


\begin{rem} \label{rem:Dim} As a consequence of Propositions \ref{prop:DecompES} and \ref{prop:MaxDim}, 
\begin{itemize}
\item if $k_p=0$ (that is to say if $E$ contains an eigenfunction which does not vanish at $ 0$), $\mbox{dim}(E_p)=1$, whatever the dimension $d$ is;
\item in the case $d=2$, $\mbox{dim}(E_{j})\le 2$ for all $1\le j\le p$.
\end{itemize}
\end{rem}

 Our analysis of the $(u,v)$-capacity is of independent interest and will yield stronger results than   strictly needed for studying the behavior of eigenvalues. We will need some additional regularity of the data. We fix an $\alpha\in]0,1[$ and make the following assumptions.
\begin{itemize}
	\item[(A1)] The dimension $d$ equals $2$;
	\item[(A2)] $\Omega$ is an open, bounded and connected set in $\R^2$, of class $C^{1,\alpha}$, such that $\R^2\setminus\overline\Omega$ is connected and $0\in\Omega$;
	\item[(A3)] $K_\eps=\eps\overline\omega$, where $\omega\subseteq \R^2$ satisfies Assumption (A2).
\end{itemize}
Let us insist on the fact that we will prove the subsequent results in the two-dimensional case (Assumption (A1)).  Let us  also note that (A2) and (A3) imply the existence of a positive number $\eps_0$ such that $K_\eps=\eps\overline\omega \subseteq \Omega$ for all $\eps\in ]-\eps_0,\eps_0[$. Finally, we consider functions $u,v$ defined in $\Omega$, satisfying
\begin{itemize}
	\item[(A4)] $u$ and $v$ belong to $H_0^1(\Omega)$ and are real-analytic in a neighborhood of $0$.
\end{itemize}

Thus, in the case of dimension $d=2$, we will be able to show the following theorem on the representation of $\mathrm{Cap}_\Omega(\eps\overline\omega, u,v)$ as the sum of a convergent series  (see Theorem \ref{capk}).

\begin{theorem} \label{t:capSeries} Under Assumptions (A1)--(A4), $\mathrm{Cap}_\Omega(\eps\overline\omega, u,v)$ is the sum of a convergent series. More precisely, there exist $r_0 \in \mathbb{R}$, a family of real numbers $\{c_{(n,l)}\}_{\substack{(n,l)\in\mathbb{N}^2\\\;l\le n+1}}$, and $\varepsilon_\mathrm{c}$ positive and small enough, such that the series 
\[
\sum_{n=0}^\infty\varepsilon^n\sum_{l=0}^{n+1} \frac{c_{(n,l)}\eta^l}{(r_0\eta+(2\pi)^{-1})^{l}}\] converges  absolutely for all $(\varepsilon,\eta)\in]-\varepsilon_\mathrm{c},\varepsilon_\mathrm{c}[\times]1/\log\varepsilon_\mathrm{c},-1/\log\varepsilon_\mathrm{c}[$ and that
\begin{equation} \label{eq:capSeries}
\mathrm{Cap}_\Omega(\varepsilon\overline{\omega},u,v)=\sum_{n=0}^\infty\varepsilon^n\sum_{l=0}^{n+1} \frac{c_{(n,l)}}{(r_0+(2\pi)^{-1}\log|\varepsilon|)^{l}}
\end{equation}
for all $\varepsilon\in]-\varepsilon_{\mathrm{c}},\varepsilon_\mathrm{c}[\setminus\{0\}$. 
Moreover,  $c_{(0,0)}=0$ and $c_{(0,1)}= -u(0)v(0)$.
\end{theorem}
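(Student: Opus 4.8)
The plan is to reduce the computation of $\mathrm{Cap}_\Omega(\varepsilon\overline\omega, u, v)$ to a boundary-integral equation on the fixed domain $\partial\omega$ (and $\partial\Omega$), whose kernel depends analytically on the two parameters $\varepsilon$ and $\eta \equiv -1/\log\varepsilon$, and then to invoke an analytic-map/implicit-function-theorem argument in Banach spaces (in the spirit of the functional analytic approach of Lanza de Cristoforis) to get a convergent double series. First I would fix $\eta_K$ a cut-off equal to $1$ near $0$ and recall from Remark \ref{rem:geombis} that $\mathrm{Cap}_\Omega(\varepsilon\overline\omega, u, v) = \int_\Omega \nabla V_{\varepsilon\overline\omega, \eta_K u}\cdot\nabla V_{\varepsilon\overline\omega, \eta_K v}\,dx$. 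The potential $V_{\varepsilon\overline\omega, u}$ solves the mixed Dirichlet problem \eqref{eq:dircapKu} on the perforated domain $\Omega\setminus\varepsilon\overline\omega$ with datum $u$ on the hole and $0$ on $\partial\Omega$; I would represent it via single-layer potentials over $\partial\Omega$ and over $\varepsilon\partial\omega$, rescale the variable in the hole by $x = \varepsilon\xi$, and collect the $\log\varepsilon$ terms coming from the fundamental solution $-\tfrac1{2\pi}\log|x-y|$ of $-\Delta$ in $\R^2$. The appearance of $(2\pi)^{-1}$ and of the constant $r_0$ in the denominators of \eqref{eq:capSeries} is exactly the signature of the logarithmic behavior of the $2$D capacity: $r_0$ encodes the (Robin-constant-type) contribution of $\partial\Omega$ to the Green function at $0$, and the factor $(2\pi)^{-1}\log|\varepsilon|$ is the leading singular part. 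This is precisely why the natural variable is $\eta$ and why the denominators $(r_0 + (2\pi)^{-1}\log|\varepsilon|)^l = (r_0\eta + (2\pi)^{-1})^l/\eta^l$ appear.

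The main steps, in order, would be: (i) write down an integral equation formulation for the pair of densities defining $V_{\varepsilon\overline\omega, u}$, using layer potentials on $\partial\Omega$ and on $\partial\omega$ after rescaling; (ii) show that, after an appropriate normalization that isolates the $\log\varepsilon$-dependent scalar (the "total charge" on the hole, which is $O(\eta)$), the resulting system of equations has the form $\mathcal M(\varepsilon,\eta)[\text{densities}] = \text{data}(\varepsilon,\eta)$ where $\mathcal M$ and the data depend real-analytically on $(\varepsilon,\eta)$ in a neighborhood of $(0,0)$ and $\mathcal M(0,0)$ is invertible; (iii) apply the analytic implicit function theorem to get that the densities, hence $V_{\varepsilon\overline\omega, u}$ in suitable norms, are real-analytic functions of $(\varepsilon,\eta)$; (iv) insert the resulting expressions into the Dirichlet-form integral $\int_\Omega \nabla V_{\varepsilon\overline\omega,u}\cdot\nabla V_{\varepsilon\overline\omega, v}\,dx$, which can be rewritten — using that $V_{\varepsilon\overline\omega, u}$ is harmonic in $\Omega\setminus\varepsilon\overline\omega$, vanishes on $\partial\Omega$, and equals $u$ on $\varepsilon\overline\omega$ — as a boundary integral $\int_{\varepsilon\partial\omega} u\,\partial_\nu V_{\varepsilon\overline\omega, v}\,d\sigma$ (plus the analogous contribution, by symmetry of the bilinear form), and expand this in the analytic variables; (v) collect powers of $\varepsilon$ and of $\eta$, verifying that at each order $\varepsilon^n$ only finitely many powers $\eta^0,\dots,\eta^{n+1}$ occur — the bound $l\le n+1$ should come from tracking how many times the "charge" normalization (which carries the single factor of $\eta$ from inverting the $\log$) can be iterated at a given order in $\varepsilon$; (vi) compute the two lowest coefficients explicitly: at $\varepsilon^0$ there is no $\eta^0$ term (so $c_{(0,0)}=0$, reflecting that the capacity of a point vanishes), and the $\varepsilon^0\eta^1$ term, i.e. the coefficient of $1/(r_0+(2\pi)^{-1}\log|\varepsilon|)$, is $-u(0)v(0)$, which matches the classical Ozawa-type leading term $-2\pi(\log\varepsilon)^{-1}u(0)^2$ once one identifies $r_0\eta + (2\pi)^{-1} \approx (2\pi)^{-1}$ at leading order. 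Finally, translating the analyticity on the polydisc in $(\varepsilon,\eta)$ back into the statement about absolute convergence of $\sum_n \varepsilon^n\sum_{l\le n+1} c_{(n,l)}\eta^l/(r_0\eta+(2\pi)^{-1})^l$ on the stated product of intervals, and then substituting $\eta = -1/\log|\varepsilon|$ to recover \eqref{eq:capSeries}, is essentially bookkeeping — one must only check that $(\varepsilon, -1/\log|\varepsilon|)$ stays inside the polydisc for $\varepsilon$ small.

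The hard part will be step (ii): setting up the integral equation so that the singular (logarithmic) part is cleanly extracted into the single scalar parameter $\eta$ with a remainder that is jointly real-analytic in $(\varepsilon,\eta)$ up to $\varepsilon=0$, and simultaneously verifying the nondegeneracy of the limiting operator $\mathcal M(0,0)$. This is the technical heart of the functional analytic approach to singularly perturbed problems: the geometry of the hole shrinking to a point makes the naive layer-potential operator degenerate as $\varepsilon\to0$, and the correct rescaling plus the "charge" unknown is what restores analyticity; verifying the precise structure of the denominators (that they are powers of $r_0\eta + (2\pi)^{-1}$ and not some messier expression) and the index constraint $l\le n+1$ requires care in keeping track of where each factor of $\log\varepsilon$ enters. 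A secondary difficulty is computing $c_{(0,1)} = -u(0)v(0)$: one needs to identify the limiting profile — the rescaled potential converges to (a multiple of) the fundamental solution exterior to $\omega$ — and evaluate the limiting Dirichlet form, using $u(\varepsilon\xi)\to u(0)$ and the normalization of the exterior equilibrium potential of $\omega$, to pin down the constant. Everything else — the reduction to a boundary integral, the analytic implicit function theorem, and the final substitution — is standard once the setup of step (ii) is in place; I would defer the detailed construction of the integral operators and the explicit expansion to the body of the paper (Theorem \ref{capk}) and here only record the structural statement.
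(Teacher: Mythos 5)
Your outline follows essentially the same route as the paper's proof (which is carried out in Section~2 and culminates in Theorem~\ref{capk}): formula \eqref{eq:div} reduces the capacity to boundary and volume integrals, the potential is represented by layer potentials whose densities solve integral equations on the fixed boundaries $\partial\Omega$ and $\partial\omega$, analyticity results of Lanza de Cristoforis type yield convergent expansions, and $c_{(0,0)}=0$, $c_{(0,1)}=-u(0)v(0)$ are read off from the leading term (Remark~\ref{rem:cepsm}). Two points where your plan diverges from the actual implementation are worth noting. First, you propose to make the integral operator depend jointly and analytically on $(\varepsilon,\eta)$ and to invoke a two-variable analytic implicit function theorem; the paper instead keeps the two systems of integral equations (Propositions~\ref{rhoeps} and \ref{thetaeps}) analytic in $\varepsilon$ alone, by splitting the solution into a double-layer part plus a multiple of a single-layer part with normalized total charge, so that $\log|\varepsilon|$ enters only through the explicit scalar ratio in the representation formula of Proposition~\ref{solution}. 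The powers $(r_0+(2\pi)^{-1}\log|\varepsilon|)^{-l}$ and the constraint $l\le n+1$ then come from expanding that ratio as a geometric series (Theorem~\ref{umk}), each additional power of the denominator costing at least one power of $\varepsilon$; this realizes rigorously what you describe heuristically in step~(v), and either variant is viable. Second, your step~(iv) is incomplete as stated: integrating by parts on $\Omega\setminus\varepsilon\overline\omega$ leaves, besides the boundary integral $-\int_{\varepsilon\partial\omega}\partial_\nu V_{\varepsilon\overline\omega,u}\,v\,d\sigma$, the volume term $\int_{\varepsilon\omega}\nabla u\cdot\nabla v\,dx$ coming from the fact that the potentials coincide with $u$ and $v$ inside the hole (see \eqref{eq:div} and Lemma~\ref{lem:nrguom}). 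This term is not an ``analogous contribution by symmetry'' but an independent convergent power series starting at order $\varepsilon^2$, contributing to the coefficients $c_{(n,0)}$ for $n\ge2$; it does not affect $c_{(0,0)}$ or $c_{(0,1)}$, but it must be included for \eqref{eq:capSeries} to hold as an exact identity.
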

We will need more precise information on the coefficients $\{c_{(n,l)}\}_{\substack{(n,l)\in\mathbb{N}^2\\\;l\le n+1}}$ in the series \eqref{eq:capSeries}. In order to state our results, we introduce the following notation. Given $u$ satisfying assumption (A4) and such that $u(0)=0$, we define $ \mathsf u$ as the unique function, continuous in $\R^2\setminus\omega$, satisfying
\begin{equation}\label{eq:U}
\left\{
\begin{array}{ll}
\Delta  \mathsf u=0&\text{in }\R^2\setminus\overline\omega\,,\\
 \mathsf u=u_\#&\text{on } \partial\omega\,,\\
\sup_{\R^2\setminus\omega} |\mathsf u|<\infty\,, &
\end{array}
\right.
\end{equation}
where $u_\#$ is, as before, the principal part of $u$ (see problem \eqref{eq:ulk}).
Then, for $u,v$ satisfying (A4), with $u(0)=v(0)=0$, we define
\begin{equation}\label{eq:Q}
	\mathcal Q(u,v)\equiv\int_{\R^2\setminus\overline\omega}\nabla  \mathsf u\cdot\nabla  \mathsf v\, dx+\int_{\omega}\nabla u_\#\cdot\nabla v_\#\, dx.
\end{equation} 
We observe that the expression in \eqref{eq:Q} is well defined. Indeed, the functions $u_\#$ and $v_\#$ are $C^{\infty}$ in the whole of $\mathbb{R}^2$ (and thus their gradients are bounded in $\overline{\omega}$). Moreover, $\mathsf u$ and $\mathsf v$ are harmonic in $\R^2\setminus\overline\omega$ and harmonic at infinity and thus the Divergence Theorem and the decay of their radial derivatives imply that $\int_{\R^2\setminus\overline\omega}\nabla  \mathsf u\cdot\nabla  \mathsf v\, dx$ is bounded. Let us note that $\mathcal Q$ is \emph{not} a bilinear form, since the principal part $u_\#$ does not depend linearly on $u$ (and, as a consequence, neither does $ \mathsf u$). However, the restriction of $\mathcal Q$ to suitable subspaces of $E(\lambda_N)$ defines bilinear forms, as we will see in Definition \ref{def:Qj} below.

 Under vanishing assumptions on $u$ and $v$ at $0$, we deduce the validity of the following proposition on the coefficients of the series in Equation \eqref{eq:capSeries} (see Theorem \ref{thm:cepsmseries}).

\begin{prop}\label{prop:coeff}
 If $u,v$ satisfy (A4), with $u(0)=v(0)=0$, the coefficients in the series \eqref{eq:capSeries} satisfy
\begin{enumerate}
 	\item $c_{(n,l)}=0$ if $n\le\kappa(u)+\kappa(v)-1$;
 	\item $c_{(\kappa(u)+\kappa(v),0)}=\mathcal Q(u,v)$.
 \end{enumerate}
\end{prop}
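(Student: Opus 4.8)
The plan is to analyze the potential $V_{K_\eps,u}$ by the method of matched asymptotic expansions (or, equivalently, by rescaling and decomposing the capacity integral), exploiting the series representation of Theorem \ref{t:capSeries} and the explicit knowledge of the leading behavior of eigenfunctions near $0$ encoded in $u_\#$ and $\mathsf u$. First I would recall that, since $u(0)=v(0)=0$, the principal parts $u_\#$, $v_\#$ are harmonic homogeneous polynomials of degrees $\kappa(u)$, $\kappa(v)$, and that near the origin $u(x)=u_\#(x)+O(|x|^{\kappa(u)+1})$. After the rescaling $x=\eps\xi$, the competitor in the variational problem \eqref{eq:ucap} defining $\mathrm{Cap}_\Omega(\eps\overline\omega,u)$ behaves, to leading order, like $\eps^{\kappa(u)}$ times a fixed profile: the rescaled boundary datum on $\partial\omega$ is $u(\eps\xi)=\eps^{\kappa(u)}u_\#(\xi)+O(\eps^{\kappa(u)+1})$, so the natural guess for the rescaled potential is $\eps^{\kappa(u)}$ times the bounded harmonic extension of $u_\#|_{\partial\omega}$ to $\R^2\setminus\overline\omega$, i.e. $\mathsf u$ from \eqref{eq:U}, suitably corrected by the ``outer'' part carried by $u$ itself. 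This is the standard inner–outer decomposition: write $V_{K_\eps,u}=\eta_{K_\eps}u - $ (a function that is $\eps^{\kappa(u)}\mathsf u(x/\eps)$ near $0$, glued to a harmonic corrector decaying at the scale of $\Omega$).

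The key computational step is then to insert this ansatz into $\mathrm{Cap}_\Omega(\eps\overline\omega,u,v)=\int_\Omega\nabla V_{K_\eps,u}\cdot\nabla V_{K_\eps,v}\,dx$ and track the powers of $\eps$. Splitting $\Omega$ into the ``hole region'' $\{|x|<\delta\}$ (where one rescales to the $\xi$-variable) and the ``outer region'' $\{|x|>\delta\}$ (where $V_{K_\eps,u}$ is small), one finds that the hole region contributes, after rescaling by $x=\eps\xi$ (so $dx=\eps^2 d\xi$ and $\nabla_x=\eps^{-1}\nabla_\xi$), a factor $\eps^{\kappa(u)+\kappa(v)}\cdot\eps^{2}\cdot\eps^{-2}=\eps^{\kappa(u)+\kappa(v)}$ multiplying an integral of $\nabla\mathsf u\cdot\nabla\mathsf v$ over $\R^2\setminus\overline\omega$, while the contribution of $\int_\omega\nabla u_\#\cdot\nabla v_\#$ arises from reassembling the integral over the full plane (the potential equals $u$, hence $u_\#$ to leading order, inside the filled hole $\omega$ in the rescaled picture — more precisely this term records the difference between integrating over $\Omega$ versus $\Omega_\eps$). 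Matching this with the series \eqref{eq:capSeries}, whose $\eps^n$-coefficient is $\sum_{l}c_{(n,l)}(r_0+(2\pi)^{-1}\log|\eps|)^{-l}$, and using that the leading profile $\mathsf u$ is \emph{bounded} (no logarithmic growth, because $u_\#$ has no monopole term when $\kappa(u)\ge1$, i.e. when $u(0)=0$), forces the coefficients of $\eps^n$ for $n<\kappa(u)+\kappa(v)$ to vanish, and identifies the coefficient of $\eps^{\kappa(u)+\kappa(v)}$ — which must be $\log$-independent, so only $c_{(\kappa(u)+\kappa(v),0)}$ survives — with $\mathcal Q(u,v)=\int_{\R^2\setminus\overline\omega}\nabla\mathsf u\cdot\nabla\mathsf v\,dx+\int_\omega\nabla u_\#\cdot\nabla v_\#\,dx$.

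The cleanest route is probably not to redo the whole asymptotic analysis but to invoke the constructive proof of Theorem \ref{t:capSeries}: there the coefficients $c_{(n,l)}$ are produced explicitly from a functional-analytic scheme (solving a system of boundary integral equations with a real-analytic dependence on $(\eps,\eta)$), and $c_{(n,l)}$ is read off as an $n$-th $\eps$-derivative of quantities built from the rescaled problem. Specializing that construction to $u(0)=v(0)=0$, the boundary data on $\partial\omega$ carry an extra factor $\eps^{\kappa(u)}$ (resp. $\eps^{\kappa(v)}$) because $u(\eps\,\cdot\,)=\eps^{\kappa(u)}u_\#(\cdot)+O(\eps^{\kappa(u)+1})$, which mechanically shifts the first nonzero $\eps$-order up to $\kappa(u)+\kappa(v)$, giving item (1); and evaluating the resulting leading coefficient reduces, by the same rescaling bookkeeping and an application of the Divergence Theorem (legitimate since $\mathsf u$, $\mathsf v$ decay at infinity, as noted after \eqref{eq:Q}), exactly to $\mathcal Q(u,v)$, giving item (2). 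I expect the main obstacle to be a bookkeeping one: correctly handling the boundary term at the interface between the inner and outer regions (equivalently, showing that the cross terms between the $\eps^{\kappa(u)}\mathsf u(x/\eps)$ part and the outer corrector, and between the potential and $\eta_{K_\eps}u$, do not contaminate the $\eps^{\kappa(u)+\kappa(v)}$ coefficient), and in verifying that the $O(|x|^{\kappa(u)+1})$ remainder in $u-u_\#$ genuinely produces only higher-order-in-$\eps$ contributions rather than interacting with the $\log\eps$ terms. These are the standard delicacies of matched asymptotics in the presence of logarithms in two dimensions, but they are controlled here by the strong conclusion of Theorem \ref{t:capSeries} that the whole capacity is an absolutely convergent series in $\eps$ and $\eta=-1/\log|\eps|$, so one only needs to pin down finitely many of its coefficients.
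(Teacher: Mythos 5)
Your ``cleanest route'' --- specializing the constructive functional-analytic scheme of Theorem~\ref{t:capSeries} to the vanishing data and tracking how the extra factors $\eps^{\kappa(u)}$, $\eps^{\kappa(v)}$ on the rescaled boundary data propagate through the coefficient formulas, then identifying the leading coefficient by the Divergence Theorem --- is exactly the paper's approach (Section~2.6, culminating in Theorem~\ref{thm:cepsmseries}): it shows the densities $(\theta^o_k,\theta^i_k)$ and hence all auxiliary quantities $u^a_{\mathrm m,k}$, $g^a_k$, $\tilde u_k$, $\tilde g_k$, $\tilde\lambda_{(n,l)}$, $\xi_n$ vanish for indices below $\kappa(u)+\kappa(v)$, and then rewrites $c_{(\kappa(u)+\kappa(v),0)}$ via the jump relations and the Divergence Theorem as $\int_{\R^2\setminus\overline\omega}\nabla\mathsf u\cdot\nabla\mathsf v+\int_\omega\nabla u_\#\cdot\nabla v_\#$.

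One inaccuracy in your heuristic paragraph is worth flagging, though it does not undermine the argument you actually commit to. You assert that the coefficient of $\eps^{\kappa(u)+\kappa(v)}$ ``must be $\log$-independent, so only $c_{(\kappa(u)+\kappa(v),0)}$ survives.'' This is false: the paper computes
\[
c_{(\kappa(u)+\kappa(v),1)}=-\Bigl(\lim_{t\to\infty}\mathsf u(t)\Bigr)\Bigl(\lim_{t\to\infty}\mathsf v(t)\Bigr),
\]
which is generically nonzero, and Theorem~\ref{thm:cepsmseries} exhibits it explicitly in the $\eps^{\kappa(u)+\kappa(v)}$-block of the expansion. Boundedness of $\mathsf u$ and $\mathsf v$ does not preclude a $\log$-dependent contribution at that order; it only guarantees that the contribution is $o(\eps^{\kappa(u)+\kappa(v)})$ because $|\log\eps|^{-1}\to0$, which is what Corollary~\ref{cor:asymptCap} uses. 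Since the proposition only claims the values of $c_{(n,l)}$ for $n\le\kappa(u)+\kappa(v)-1$ and of $c_{(\kappa(u)+\kappa(v),0)}$, this side claim is harmless, but it would become an actual error if you tried to push the asymptotic expansion one order further using that heuristic.
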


From Theorem \ref{t:capSeries} and Proposition \ref{prop:coeff}, we immediately deduce the asymptotic behavior of some $(u,v)$-capacities (see also Remark \ref{rem:cepsmseries}).

\begin{cor}\label{cor:asymptCap} Let us fix $u,v$ satisfying Assumption (A4). Then,
\begin{enumerate}
	\item if $u(0)$ and $v(0)$ are non-zero,
	\begin{equation*}
		\mathrm{Cap}_\Omega(\eps\overline\omega, u,v)=\frac{2\pi u(0)v(0)}{|\log(\eps)|}+o\left(\frac1{|\log(\eps)|}\right)\mbox{ as }\eps\to 0^+;
	\end{equation*}
	\item if $u(0)=v(0)=0$,
	\begin{equation*}
		\mathrm{Cap}_\Omega(\eps\overline\omega, u,v)=\eps^{\kappa(u)+\kappa(v)}\mathcal Q(u,v)+o\left(\eps^{\kappa(u)+\kappa(v)}\right)\mbox{ as }\eps\to 0^+.
	\end{equation*}
\end{enumerate}
\end{cor}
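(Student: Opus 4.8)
The plan is to deduce Corollary~\ref{cor:asymptCap} directly from Theorem~\ref{t:capSeries} and Proposition~\ref{prop:coeff} by extracting the leading term of the convergent series \eqref{eq:capSeries}. The starting point is that, by Theorem~\ref{t:capSeries}, for $\varepsilon\in]0,\varepsilon_{\mathrm c}[$ we may write
\[
\mathrm{Cap}_\Omega(\varepsilon\overline\omega,u,v)=\sum_{n=0}^\infty\varepsilon^n\sum_{l=0}^{n+1}\frac{c_{(n,l)}}{(r_0+(2\pi)^{-1}\log|\varepsilon|)^l},
\]
the series being absolutely convergent for $\varepsilon$ small enough. The idea is then to separate the $n=0$ block from the tail $\sum_{n\ge1}$ and to control the tail by the absolute convergence statement: setting $\eta=-1/\log\varepsilon$, the tail is $O(\varepsilon)$ uniformly as $\varepsilon\to0^+$ (since for fixed small $\eta_0$ the full double series converges absolutely, hence $\sum_{n\ge1}\varepsilon^n(\cdots)$ is bounded by $C\varepsilon$ for $\varepsilon$ small), which is negligible against every negative power of $\varepsilon$ and against every negative power of $|\log\varepsilon|$.

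For part~(1), I would use $c_{(0,0)}=0$ and $c_{(0,1)}=-u(0)v(0)$ from Theorem~\ref{t:capSeries}. The $n=0$ block is then
\[
c_{(0,0)}+\frac{c_{(0,1)}}{r_0+(2\pi)^{-1}\log|\varepsilon|}=\frac{-u(0)v(0)}{r_0+(2\pi)^{-1}\log|\varepsilon|},
\]
and since $r_0+(2\pi)^{-1}\log|\varepsilon|=(2\pi)^{-1}\log\varepsilon\,(1+o(1))=-(2\pi)^{-1}|\log\varepsilon|(1+o(1))$ as $\varepsilon\to0^+$, this equals $\dfrac{2\pi u(0)v(0)}{|\log\varepsilon|}(1+o(1))$. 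Adding the $O(\varepsilon)=o(1/|\log\varepsilon|)$ tail gives the first asymptotic. For part~(2), when $u(0)=v(0)=0$ set $\kappa=\kappa(u)+\kappa(v)$; by Proposition~\ref{prop:coeff}(1) all coefficients $c_{(n,l)}$ with $n\le\kappa-1$ vanish, so the series starts at $n=\kappa$. Splitting off the $n=\kappa$ block and bounding $\sum_{n\ge\kappa+1}$ by $C\varepsilon^{\kappa+1}=o(\varepsilon^\kappa)$, I am left with
\[
\varepsilon^\kappa\sum_{l=0}^{\kappa+1}\frac{c_{(\kappa,l)}}{(r_0+(2\pi)^{-1}\log|\varepsilon|)^l}.
\]
In this finite sum the $l=0$ term is the constant $c_{(\kappa,0)}=\mathcal Q(u,v)$ by Proposition~\ref{prop:coeff}(2), while each term with $l\ge1$ carries a factor $(r_0+(2\pi)^{-1}\log|\varepsilon|)^{-l}\to0$ and is therefore $o(1)$; hence the whole finite sum is $\mathcal Q(u,v)+o(1)$, and multiplying by $\varepsilon^\kappa$ yields $\varepsilon^{\kappa}\mathcal Q(u,v)+o(\varepsilon^{\kappa})$.

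The only genuine point requiring care — and the closest thing to an obstacle — is justifying that the remainder $\sum_{n\ge1}\varepsilon^n\sum_{l=0}^{n+1}c_{(n,l)}(r_0+(2\pi)^{-1}\log|\varepsilon|)^{-l}$ really is $O(\varepsilon)$ (respectively $O(\varepsilon^{\kappa+1})$) as $\varepsilon\to0^+$, uniformly in the slowly-varying logarithmic variable. This follows from the absolute convergence asserted in Theorem~\ref{t:capSeries}: for $(\varepsilon,\eta)$ in the stated bidisk the quantity $\sum_{n,l}|c_{(n,l)}|\,|\varepsilon|^n\,|\eta|^l/|r_0\eta+(2\pi)^{-1}|^l$ is finite and, being a convergent power-type series in $\varepsilon$ with no constant term after factoring, is dominated by $C|\varepsilon|$ on a smaller bidisk; one just has to note that $\eta=-1/\log\varepsilon$ lies in the admissible range $]1/\log\varepsilon_{\mathrm c},-1/\log\varepsilon_{\mathrm c}[$ for $\varepsilon$ small and that $(r_0+(2\pi)^{-1}\log|\varepsilon|)^{-1}=\eta/(r_0\eta+(2\pi)^{-1})$, so the series in the statement and the one appearing in the corollary are literally the same object. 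With that bookkeeping in place the corollary is immediate; Remark~\ref{rem:cepsmseries} presumably records the same computation.
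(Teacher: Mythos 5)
Your proof is correct and follows essentially the same route as the paper: the paper likewise deduces the corollary by isolating the leading block of the series from Theorem \ref{t:capSeries} and absorbing the tail into an $O(\varepsilon)$ (resp.\ $O(\varepsilon^{\overline{k}^a+\overline{k}^b+1})$) remainder factored out by absolute convergence (see Remark \ref{rem:cepsm}, formula \eqref{eq:capepsfirst}, for case~1, and Theorem \ref{thm:cepsmseries} with Remark \ref{rem:cepsmseries} for case~2). The only nit is that the substitution should be $\eta=1/\log\varepsilon$ (which is negative for small $\varepsilon>0$ and lies in the stated admissible interval) rather than $-1/\log\varepsilon$, so that the identity $(r_0+(2\pi)^{-1}\log|\varepsilon|)^{-1}=\eta/(r_0\eta+(2\pi)^{-1})$ holds; this does not affect the argument.
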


Let us now consider again an eigenvalue $\lambda_N$ of multiplicity $m$ and $E(\lambda_N)$ the associated eigenspace. We use the order decomposition defined in Proposition \ref{prop:DecompES}.

\begin{defn}\label{def:Qj} For all $1\le j\le p$, we define $\mathcal Q_j$ on $E_j$ in the following way. If $k_j=0$, $u,v\in E_j$,
\[\mathcal Q_j(u,v)\equiv 2\pi u(0)v(0).\]
If $k_j\ge1$, $u,v\in E_j$ 
\[\mathcal Q_j(u,v)\equiv\mathcal Q(u,v).\]
It is a strictly positive (in particular non-degenerate) symmetric bilinear form on $E_j$.
\end{defn}

Using the previous definitions and results, we can describe the behavior of the eigenvalues $(\lambda_i^\eps)_{N\le i\le N+m-1}$, and more specifically give the principal part of the spectral shift $\lambda_i^\eps-\lambda_N$ for each eigenvalue branch departing from $\lambda_N$. Let us formulate the result precisely. 

In order to unify the notation, we define, for $k\in \N$ and $\eps>0$,
\begin{equation}\label{eq:scale}
	\rho_k^\eps \equiv\begin{cases}
					\frac1{|\log(\eps)|}&\mbox{ if }k=0\, ,\\
					\eps^{2k}		    &\mbox{ if }k\ge1\, .
				\end{cases}
\end{equation}
The functions $\{\eps\mapsto\rho_k^\eps\}_{k\ge0}$ form a so-called \emph{asymptotic scale}: they are continuous and positive in $]0,+\infty[$, and as $\eps\to0^+$, $\rho_0^\eps\to0$ and $\rho_{k+1}^\eps=o(\rho_k^\eps)$.  We will find an equivalent of $\lambda_i^\eps-\lambda_N$ on that scale. Note that according to Lemma \ref{l:a1}, the error term $\chi_\eps^2$ (defined by Equation \eqref{eq:chiEps}) is of the same order as $\rho_{k_p}^\eps$ (recall that $k_p$ is the smallest possible order for an eigenfunction in $E(\lambda_N)$). Note also that the particular form of the functions $\{\rho_k^\eps\}$ depends on Assumption (A1), namely $d=2$.
 
\begin{theorem} \label{thm:orderEVs} 
Let us assume that (A1)--(A3) are satisfied. For $1\le j\le p$, we write 
\[m_j\equiv\mbox{dim}(E_j),\]
so that 
\[m=m_1+\dots+m_j+\dots+m_p,\]
and we denote by 
\[0<\mu_{j,1}\le\dots\le\mu_{j,\ell}\le\dots\le\mu_{j,m_j}\]
the eigenvalues of the quadratic form $\mathcal{Q}_j$. Then, for all $1\le j\le p$ and $1\le \ell \le m_j$,
\begin{equation}\label{eq:asymptEVOrder}
\lambda_{N-1+m_1+\dots+m_{j-1}+\ell}^\eps=\lambda_N+\mu_{j,\ell}\,\rho_{k_j}^\eps+o(\rho_{k_j}^\eps)\mbox{ as }\eps\to0^+.
\end{equation}
\end{theorem}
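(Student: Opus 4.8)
The plan is to combine Theorem~\ref{thm:approxEVs} with the asymptotic information on the $(u,v)$-capacity from Corollary~\ref{cor:asymptCap}, decoding the quadratic form $r_\eps$ through the order decomposition $E(\lambda_N)=E_1\oplus\dots\oplus E_p$. The starting point is the formula $\lambda_{N+i-1}^\eps=\lambda_N+\mu_i^\eps+o(\chi_\eps^2)$, where the $\mu_i^\eps$ are the eigenvalues of $r_\eps(u,v)=\mathrm{Cap}_\Omega(K_\eps,u,v)-\lambda_N\int_\Omega V_{K_\eps,u}V_{K_\eps,v}$. The second term is lower order: since $V_{K_\eps,u}$ is small in $H^1_0(\Omega)$ (its Dirichlet energy is $\mathrm{Cap}_\Omega(K_\eps,u)\to 0$), by Poincar\'e its $L^2$ norm is also controlled, and in fact one expects $\int_\Omega V_{K_\eps,u}V_{K_\eps,v}=o(\mathrm{Cap}_\Omega(K_\eps,u,v))$ — this should be quantified via the estimates already developed for the potentials. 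So the leading behaviour of $r_\eps$ on $E(\lambda_N)\times E(\lambda_N)$ is governed by $(u,v)\mapsto\mathrm{Cap}_\Omega(K_\eps,u,v)$.

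Next I would analyze $\mathrm{Cap}_\Omega(\eps\overline\omega,u,v)$ for $u,v\in E(\lambda_N)$ using the order decomposition. Write $u=\sum_j u^{(j)}$, $v=\sum_j v^{(j)}$ with $u^{(j)},v^{(j)}\in E_j$, and expand bilinearly (using that the potential map $u\mapsto V_{K_\eps,u}$ is linear, so $\mathrm{Cap}_\Omega(K_\eps,\cdot,\cdot)$ really is a bilinear form even though $\mathcal Q$ is not). The key point is the \emph{separation of scales}: by Corollary~\ref{cor:asymptCap}, a block $E_j$ with $k_j=0$ contributes at order $\rho_0^\eps=1/|\log\eps|$ with leading form $2\pi u(0)v(0)=\mathcal Q_j(u,v)$; a block $E_j$ with $k_j\ge1$ contributes at order $\eps^{2k_j}=\rho_{k_j}^\eps$ with leading form $\mathcal Q(u,v)=\mathcal Q_j(u,v)$. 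Cross terms $\mathrm{Cap}_\Omega(K_\eps,u^{(i)},v^{(j)})$ with $i\neq j$ involve $\kappa(u^{(i)})+\kappa(v^{(j)})=k_i+k_j$ and hence are $o(\rho_{k_{\max(i,j)}}^\eps)$ — strictly smaller than the diagonal block of the higher-order (smaller $k$) factor. Thus, in the asymptotic scale $\{\rho_k^\eps\}$, the bilinear form $r_\eps$ on $E(\lambda_N)$ is, up to lower-order terms, block-diagonal: $r_\eps\approx\bigoplus_j \rho_{k_j}^\eps\,\mathcal Q_j$ on $E_1\oplus\dots\oplus E_p$.

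The conclusion then follows from a perturbative eigenvalue argument: the eigenvalues of a symmetric bilinear form that is block-diagonal plus a perturbation small relative to each block's scale are, to leading order in each scale, the union of the (rescaled) eigenvalues of the blocks. Concretely, since $\rho_{k_1}^\eps\gg\rho_{k_2}^\eps\gg\dots\gg\rho_{k_p}^\eps$ and each $\mathcal Q_j$ is strictly positive (Definition~\ref{def:Qj}), the $m$ eigenvalues $\mu_i^\eps$ of $r_\eps$ split into groups: the $m_1$ largest behave like $\mu_{1,\ell}\rho_{k_1}^\eps$, the next $m_2$ like $\mu_{2,\ell}\rho_{k_2}^\eps$, and so on down to the $m_p$ smallest behaving like $\mu_{p,\ell}\rho_{k_p}^\eps$. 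Matching this ordering with the ordering $\lambda_{N}^\eps\le\dots\le\lambda_{N+m-1}^\eps$ gives exactly \eqref{eq:asymptEVOrder}, once one checks that the $o(\chi_\eps^2)$ error in Theorem~\ref{thm:approxEVs} is $o(\rho_{k_p}^\eps)$, which holds by the remark preceding the theorem ($\chi_\eps^2$ is of order $\rho_{k_p}^\eps$, so $o(\chi_\eps^2)=o(\rho_{k_p}^\eps)$, and a fortiori $o(\rho_{k_j}^\eps)$ for each $j$ since $\rho_{k_p}^\eps=o(\rho_{k_j}^\eps)$ when $j<p$, but note for $j=p$ we genuinely need $o(\chi_\eps^2)=o(\rho_{k_p}^\eps)$ which is the content of Lemma~\ref{l:a1}).

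The main obstacle is the quantitative block-diagonalization combined with the eigenvalue-splitting lemma: one must argue carefully that perturbations of size $o(\rho_{k_j}^\eps)$ affect the $j$-th group of eigenvalues only at relative order $o(1)$, despite the fact that perturbations coming from higher-order blocks (size $\rho_{k_i}^\eps$ with $i<j$) are \emph{larger} than $\rho_{k_j}^\eps$ in absolute terms. The resolution is a standard but somewhat delicate min-max / Courant--Fischer argument: restricting $r_\eps$ to the subspace $E_j\oplus\dots\oplus E_p$ (where the $E_i$, $i<j$, are projected out) changes the relevant eigenvalues only by $o(\rho_{k_j}^\eps)$ because the off-block entries between $E_{<j}$ and $E_{\ge j}$ are themselves $o(\rho_{k_j}^\eps)$, and then on $E_j\oplus\dots\oplus E_p$ the form $\rho_{k_j}^\eps\mathcal Q_j$ dominates all remaining contributions. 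I would phrase this via a two-sided comparison $|r_\eps(u,u)-\rho_{k_j}^\eps\mathcal Q_j(\pi_j u,\pi_j u)|\le o(\rho_{k_j}^\eps)\|u\|^2$ on appropriate subspaces (with $\pi_j$ the projection onto $E_j$), feeding this into min-max to pin down the $\ell$-th eigenvalue of the $j$-th group.
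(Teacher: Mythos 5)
There are two connected mistakes, and the second one is a genuine gap that invalidates the proposed proof.

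First, the ordering of the scales is backwards. By Proposition~\ref{prop:DecompES} one has $k_1>k_2>\dots>k_p\ge0$, and from the definition~\eqref{eq:scale} one has $\rho^\eps_{k+1}=o(\rho^\eps_k)$ as $\eps\to0^+$. Hence $\rho_{k_1}^\eps\ll\rho_{k_2}^\eps\ll\dots\ll\rho_{k_p}^\eps$, not the other way round. Consequently the block $E_p$ (lowest vanishing order) produces the \emph{largest} shift, of order $\rho_{k_p}^\eps$, and $E_1$ produces the \emph{smallest}. If you carry your inverted ordering through the ``matching'' step, the conclusion you would actually obtain is $\lambda^\eps_{N+m-m_1+\ell-1}=\lambda_N+\mu_{1,\ell}\rho^\eps_{k_1}+o(\rho^\eps_{k_1})$, which is not~\eqref{eq:asymptEVOrder}.

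Second, and more seriously, your claim that the error in Theorem~\ref{thm:approxEVs} is already good enough is false. You write that $o(\chi_\eps^2)=o(\rho_{k_p}^\eps)$ is ``a fortiori $o(\rho_{k_j}^\eps)$ for each $j$ since $\rho_{k_p}^\eps=o(\rho_{k_j}^\eps)$ when $j<p$.'' This inclusion is reversed: for $j<p$ one has $k_j>k_p$, hence $\rho_{k_j}^\eps=o(\rho_{k_p}^\eps)$, not the converse. So $o(\chi_\eps^2)=o(\rho_{k_p}^\eps)$ can be arbitrarily large compared with $\rho_{k_j}^\eps$ for $j<p$, and Theorem~\ref{thm:approxEVs} then only says $\lambda^\eps_{N+i-1}-\lambda_N=\mu^\eps_i+o(\rho_{k_p}^\eps)$, which for the slower ($j<p$) branches carries no information at the scale $\rho_{k_j}^\eps$. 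A finite-dimensional min-max on $A_\eps$ alone cannot repair this, because the obstruction is not the block-diagonal structure of $A_\eps$ but the size of the error term linking the eigenvalues of $A_\eps$ to $\nu_i^\eps\equiv\lambda_i^\eps-\lambda_N$.

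The paper resolves this by an iterative rescaling of the \emph{infinite-dimensional} quadratic form, re-applying Proposition~\ref{p:appEV} at each scale: after the first step identifies $\nu^\eps_{N-1+m_1+\dots+m_{p-1}+\ell}=\mu_{p,\ell}\rho_{k_p}^\eps+o(\rho_{k_p}^\eps)$, one sets $q^\eps_{p-1}\equiv q_\eps/\rho_{k_p}^\eps$, restricts the test subspace to $F^\eps_{p-1}\equiv\Pi_\eps(E_1\oplus\dots\oplus E_{p-1})$, and shows that the hypotheses of Proposition~\ref{p:appEV} hold again with a \emph{fixed} gap parameter $\gamma$ (coming from the fact that $\nu^\eps_{N+m_1+\dots+m_{p-1}}/\rho_{k_p}^\eps\to\mu_{p,1}>0$) and a new, much smaller coupling $\delta=o\big((\rho_{k_{p-1}}^\eps/\rho_{k_p}^\eps)^{1/2}\big)$, which upon undoing the rescaling gives an error $o(\rho_{k_{p-1}}^\eps)$. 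Iterating down to $j=1$ proves the theorem. This rescaling loop is the core of the paper's proof and is not present in your proposal; without it, the argument does not close for $j<p$.
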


As an illustration, let us consider a particular case.
\begin{cor} \label{cor:doubleEV} Let us assume that (A1)--(A3) are satisfied and that $\lambda_N$ has multiplicity $2$ (i.e. $m=2$). Then one of the following alternative holds.
\begin{enumerate}
\item  There exist two normalized eigenfunctions $u_1,u_2\in E(\lambda_N)\setminus\{0\}$, with respective order of vanishing $k_1,k_2$ such that $k_1>k_2$. In that case, 
\begin{equation*}
	\lambda_{N}^\eps= \lambda_{N}+\mathcal Q(u_1,u_1)\eps^{2k_1}+o(\eps^{2k_1})
\end{equation*}
and
\begin{equation*}
	\lambda_{N+1}^\eps=\lambda_{N}+\begin{cases}
								\frac{2\pi u_2(0)^2}{|\log(\eps)|}+o\left(\frac1{|\log(\eps)|}\right)&\mbox{ if } k_2=0\, ,\\
								\mathcal Q(u_2,u_2)\eps^{2k_2}+o(\eps^{2k_2})&\mbox{ if } k_2\ge1\, .
						\end{cases}
\end{equation*}

\item All eigenfunctions in $E(\lambda_N)$ have the same order of vanishing, which we denote by $k$. Let us note that necessarily $k\ge1$. In that case, let us choose eigenfunctions $u_1,u_2$ forming an orthonormal basis of $E(\lambda_N)$ and let us denote by $0<\mu_1\le\mu_2$ the eigenvalues of the symmetric and positive matrix
\begin{equation*}
		\left(\begin{array}{cc}
				\mathcal Q(u_1,u_1)& \mathcal Q(u_1,u_2)\\
				\mathcal Q(u_1,u_2)& \mathcal Q(u_2,u_2)
			\end{array}
		\right).
\end{equation*}
Then
\begin{align*}
	\lambda_N^\eps=& \lambda_{N}+\mu_1\eps^{2k}+o(\eps^{2k});\\
	\lambda_{N+1}^\eps=& \lambda_{N}+\mu_2\eps^{2k}+o(\eps^{2k}).
\end{align*}
\end{enumerate}
\end{cor}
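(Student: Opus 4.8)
The plan is to obtain Corollary~\ref{cor:doubleEV} as a direct specialization of Theorem~\ref{thm:orderEVs} to the case $m=2$. The only genuine preliminary step is to enumerate the possible shapes of the order decomposition $E(\lambda_N)=E_1\oplus\dots\oplus E_p$ of Proposition~\ref{prop:DecompES}. Since $\dim E(\lambda_N)=m=2$ and, by Remark~\ref{rem:Dim} (which rests on Proposition~\ref{prop:MaxDim} with $d=2$), each summand satisfies $\dim E_j\le 2$, exactly two configurations are possible: either $p=2$ with $\dim E_1=\dim E_2=1$ and orders $k_1>k_2\ge0$, or $p=1$ with $\dim E_1=2$ and a single order $k_1=:k$. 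These are precisely alternatives (1) and (2) of the statement, so the dichotomy is exhaustive. Moreover in configuration (2) one must have $k\ge1$: if $k=k_p=0$, Remark~\ref{rem:Dim} forces $\dim E_p=1$, contradicting $\dim E_1=2$; and in configuration (1) the inequality $k_1>k_2\ge0$ already gives $k_1\ge1$.

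\textbf{Configuration $p=2$ (alternative (1)).} Choose $L^2$-normalized generators $u_1$ of $E_1$ and $u_2$ of $E_2$; by the orthogonality of the summands in Proposition~\ref{prop:DecompES}, $\{u_1,u_2\}$ is an orthonormal basis of $E(\lambda_N)$, and $u_i$ has order of vanishing $k_i$. Applying Theorem~\ref{thm:orderEVs} with $m_1=m_2=1$, formula \eqref{eq:asymptEVOrder} reads $\lambda_N^\eps=\lambda_N+\mu_{1,1}\,\rho_{k_1}^\eps+o(\rho_{k_1}^\eps)$ for $j=1,\ell=1$, and $\lambda_{N+1}^\eps=\lambda_N+\mu_{2,1}\,\rho_{k_2}^\eps+o(\rho_{k_2}^\eps)$ for $j=2,\ell=1$. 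Here $\mu_{1,1}$ is the unique eigenvalue of the one-dimensional quadratic form $\mathcal Q_1$ on $E_1$, hence $\mu_{1,1}=\mathcal Q_1(u_1,u_1)$; since $k_1\ge1$, Definition~\ref{def:Qj} identifies $\mathcal Q_1$ with $\mathcal Q$ and \eqref{eq:scale} gives $\rho_{k_1}^\eps=\eps^{2k_1}$, yielding the asserted expansion of $\lambda_N^\eps$. Similarly $\mu_{2,1}=\mathcal Q_2(u_2,u_2)$, which by Definition~\ref{def:Qj} equals $2\pi u_2(0)^2$ if $k_2=0$ (with $\rho_0^\eps=1/|\log(\eps)|$) and $\mathcal Q(u_2,u_2)$ if $k_2\ge1$ (with $\rho_{k_2}^\eps=\eps^{2k_2}$), which is exactly the two-case formula for $\lambda_{N+1}^\eps$.

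\textbf{Configuration $p=1$ (alternative (2)).} Here $m_1=2$ and, since $k\ge1$, Definition~\ref{def:Qj} gives $\mathcal Q_1=\mathcal Q$, which is a genuine symmetric bilinear form on $E_1=E(\lambda_N)$. For any $L^2$-orthonormal basis $\{u_1,u_2\}$ of $E(\lambda_N)$, the matrix of $\mathcal Q$ in this basis is $\begin{pmatrix}\mathcal Q(u_1,u_1)&\mathcal Q(u_1,u_2)\\ \mathcal Q(u_1,u_2)&\mathcal Q(u_2,u_2)\end{pmatrix}$, so the eigenvalues of the quadratic form $\mathcal Q_1$ relative to the $L^2$ inner product coincide with the eigenvalues $0<\mu_1\le\mu_2$ of this matrix; positivity is guaranteed by the strict positivity of $\mathcal Q_1$ from Definition~\ref{def:Qj}. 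Theorem~\ref{thm:orderEVs} with $j=1$, $\ell=1,2$ and $\rho_k^\eps=\eps^{2k}$ then yields $\lambda_N^\eps=\lambda_N+\mu_1\eps^{2k}+o(\eps^{2k})$ and $\lambda_{N+1}^\eps=\lambda_N+\mu_2\eps^{2k}+o(\eps^{2k})$.

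\textbf{Main obstacle.} There is no analytic difficulty beyond what Theorem~\ref{thm:orderEVs} already provides; the only points requiring attention are the completeness of the case distinction (and the argument that $k\ge1$ in alternative (2)) and the index bookkeeping. In particular one must check that the ordering conventions are compatible: the orders decrease, $k_1>\dots>k_p$, while $\eps\mapsto\rho_k^\eps$ is decreasing in $k$, so $E_1$ (largest order) produces the slowest-decaying spectral shift, which is the one attached to the lowest branch $\lambda_N^\eps$ through the index $\lambda_{N-1+m_1+\dots+m_{j-1}+\ell}^\eps$ of Theorem~\ref{thm:orderEVs}. Matching this formula with the labels $\lambda_N^\eps$ and $\lambda_{N+1}^\eps$ in each of the two configurations completes the proof.
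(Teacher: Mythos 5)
Your proof is correct and is essentially the intended derivation: the paper presents Corollary~\ref{cor:doubleEV} as a direct specialization of Theorem~\ref{thm:orderEVs} without a separate written proof, and your enumeration of the two possible order decompositions ($p=2$, $m_1=m_2=1$ versus $p=1$, $m_1=2$), the use of Remark~\ref{rem:Dim} to force $k\ge1$ in the second configuration, and the index bookkeeping for $\lambda_{N-1+m_1+\dots+m_{j-1}+\ell}^\eps$ all match what the paper expects the reader to verify. The only minor observation is that the bound $\dim E_j\le 2$ from Proposition~\ref{prop:MaxDim} is not needed for the exhaustiveness of the case distinction (that already follows from $\dim E(\lambda_N)=2$), though you do legitimately invoke Remark~\ref{rem:Dim} where it is actually essential, namely to rule out $k=0$ in alternative~(2).
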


We note that in the preceding Corollary \ref{cor:doubleEV}, item 2, the splitting of the two branches does not necessarily occur since the two eigenvalues $\mu_1$ and $\mu_2$ are not necessarily different. In the subsequent result we exhibit a particular case  where this splitting in fact takes place.

\begin{cor}\label{c:1}
 Let us assume that (A1)--(A3) are satisfied and that $\lambda_N$ has multiplicity $2$ (i.e. $m=2$). Moreover, let us assume that $\omega$ is a disk  and that $\{u_N,u_{N+1}\}$ is an orthogonal basis of $E(\lambda_N)$. If, at the point $x=0$, the nodal lines of $ u_{N}$ are not tangent to any bisector between two nodal lines of $ u_{N+1}$, 
then $\mu_1 \neq \mu_2$ in Corollary \ref{cor:doubleEV}, item 2. Thus, the double eigenvalue $\lambda_{N}$ splits into two different branches $\lambda_{N}^\eps$ and $\lambda_{N+1}^\eps$. 
\end{cor}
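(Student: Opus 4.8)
\emph{Proof proposal for Corollary~\ref{c:1}.}
First I would reduce to the relevant case. The statement concerns item~2 of Corollary~\ref{cor:doubleEV} (in item~1 the two branches already have different leading behaviour), so we may assume every eigenfunction in $E(\lambda_N)\setminus\{0\}$ vanishes at the origin to one common order $k\ge1$. By Proposition~\ref{prop:DecompES} the principal parts $u_\#$, for $u\in E(\lambda_N)$, then fill the two-dimensional space $\mathcal{H}_k$ of homogeneous harmonic polynomials of degree $k$. Identifying $\R^2$ with $\C$, each element of $\mathcal{H}_k$ is $\mathrm{Re}(\gamma z^k)$ for a unique $\gamma\in\C$; and since on $E(\lambda_N)$ the map $u\mapsto u_\#$ is linear (this is exactly what makes $\mathcal Q_1=\mathcal Q|_{E(\lambda_N)}$ bilinear, cf.\ Definition~\ref{def:Qj}), $u\mapsto\gamma_u$ is an $\R$-linear isomorphism of $E(\lambda_N)$ onto $\C$. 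The whole argument is run through this identification.

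Next I would compute $\mathcal Q$ in the disk case. Writing $\omega=\mathbb{B}_2(0,R)$, the unique bounded harmonic function solving \eqref{eq:U} with datum $u_\#$ is $\mathsf u(z)=R^{2k}\,\mathrm{Re}(\overline{\gamma_u}\,z^{-k})$: it is harmonic and bounded in $\R^2\setminus\overline\omega$, and on $\{|z|=R\}$ we have $R^{2k}z^{-k}=\bar z^{k}$, so $R^{2k}\overline{\gamma_u}z^{-k}=\overline{\gamma_u z^k}$ has real part $u_\#$. Using the identity $\nabla\,\mathrm{Re}(g)\cdot\nabla\,\mathrm{Re}(h)=\mathrm{Re}(\overline{g'}h')$ for holomorphic $g,h$ and integrating in polar coordinates, both terms of \eqref{eq:Q} turn out to equal $\pi k R^{2k}\,\mathrm{Re}(\gamma_u\overline{\gamma_v})$, so that
\[
\mathcal Q(u,v)=2\pi k R^{2k}\,\mathrm{Re}(\gamma_u\overline{\gamma_v}).
\]
Thus, transported to $\C$, the form $\mathcal Q_1$ is a fixed positive multiple of the standard inner product. (One can also bypass the computation: since $\omega$ is a disk centred at the origin, $\mathcal Q_1$ is invariant under the rotation action on principal parts, whose orbit on $\C$ is all of $\C$; a rotation-invariant symmetric bilinear form on $\C\cong\R^2$ is a multiple of the Euclidean one, the multiple being positive by Definition~\ref{def:Qj}.)

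Then I would translate the geometric hypothesis. The nodal set of $u_\#=\mathrm{Re}(\gamma_u z^k)$ is the union of the $k$ lines through the origin with directions $\arg z\equiv(\tfrac\pi2-\arg\gamma_u)/k\pmod{\pi/k}$, and these are exactly the tangent lines at $0$ to the nodal set of $u$; the bisectors of the $2k$ sectors they cut out — equivalently, the directions along which $\mathrm{Re}(\gamma_u z^k)$ is extremal on circles about $0$ — are the $k$ lines of directions $\arg z\equiv-(\arg\gamma_u)/k\pmod{\pi/k}$. Hence a nodal line of $u_N$ coincides with (i.e.\ is tangent at $0$ to) a bisector between nodal lines of $u_{N+1}$ if and only if $\arg\gamma_{u_N}-\arg\gamma_{u_{N+1}}\equiv\tfrac\pi2\pmod\pi$, that is, if and only if $\mathrm{Re}(\gamma_{u_N}\overline{\gamma_{u_{N+1}}})=0$, equivalently $\mathcal Q(u_N,u_{N+1})=0$. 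Finally I would conclude: normalising, $u_1=u_N/\|u_N\|$ and $u_2=u_{N+1}/\|u_{N+1}\|$ form an orthonormal basis of $E(\lambda_N)$ (as $u_N\perp u_{N+1}$), and by bilinearity $\mathcal Q(u_1,u_2)$ is a positive multiple of $\mathcal Q(u_N,u_{N+1})$; the non-tangency hypothesis gives $\mathcal Q(u_N,u_{N+1})\neq0$, so the $2\times2$ symmetric matrix of Corollary~\ref{cor:doubleEV}, item~2, has a non-zero off-diagonal entry, is not a scalar multiple of the identity, and hence has distinct eigenvalues $0<\mu_1<\mu_2$. By Corollary~\ref{cor:doubleEV}, $\lambda_N^\eps=\lambda_N+\mu_1\eps^{2k}+o(\eps^{2k})$ and $\lambda_{N+1}^\eps=\lambda_N+\mu_2\eps^{2k}+o(\eps^{2k})$ with $\mu_1<\mu_2$, so the two branches separate for small $\eps>0$.

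I expect the main obstacle to be the dictionary in the third paragraph rather than anything analytic: computing $\mathsf u$ and evaluating \eqref{eq:Q} on a disk is routine once the exterior solution is guessed, and the final linear algebra is immediate, but one must keep careful track of the division by $k$ and the reduction modulo $\pi$ when passing from the nodal rays of the principal parts to the lines they span and to the bisectors of those lines, and one must read ``tangent'' correctly — two distinct lines through a common point are transverse, so tangency here forces coincidence. One should also check that the paper's notion of ``bisector between two nodal lines'' matches the extremal-direction description used above, in particular in the degenerate case $k=1$, where there is a single nodal line and its ``bisector'' must be read as the line perpendicular to it.
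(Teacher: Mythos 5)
Your proof is correct and reaches the same conclusion as the paper, but the computational route is different. The paper obtains the entries of the $2\times 2$ matrix of $\mathcal Q_1$ by specialising the general elliptic-hole formulas (computed via the conformal map to elliptic coordinates and Fourier expansions in \cite{AbBoLeMu}) and then letting the eccentricity go to zero ($b\to a$); it records
\[
M_{1,2}=2\pi k\, a^{2k}\,\beta_{N}\beta_{N+1}\cos\bigl(k\varphi_{N}-k\varphi_{N+1}\bigr),
\]
where $r^{-k}u_l(r\cos t,r\sin t)\to\beta_l\sin(kt+k\varphi_l)$, and then observes that $M_{1,2}=0$ forces $\varphi_{N}-\varphi_{N+1}=\pm\pi/(2k)$, which is exactly the tangency condition. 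You instead compute $\mathcal Q$ on the disk directly in complex coordinates and get $\mathcal Q(u,v)=2\pi k R^{2k}\,\mathrm{Re}(\gamma_u\overline{\gamma_v})$ with $u_\#=\mathrm{Re}(\gamma_u z^k)$; writing $\gamma_{u_l}=|\beta_l|e^{i(k\varphi_l-\pi/2)}$ shows the two formulas agree. Both proofs reduce to showing the off-diagonal entry of the matrix of $\mathcal Q_1$ in an orthonormal basis is nonzero, which immediately yields distinct eigenvalues. Your complex-variable computation is shorter and self-contained for the disk, while the paper's approach reuses the machinery needed for the general elliptic case; neither buys more generality here, but yours makes the geometric dictionary (nodal rays $\leftrightarrow$ $\arg\gamma$, bisectors $\leftrightarrow$ extremal directions) transparent. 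One small remark: the paper's statement of the boxed formulas has the indices of $\beta$ apparently swapped in $M_{1,1}$ and $M_{2,2}$ relative to what one expects from $M_{ij}=\mathcal Q(u_{N+i-1},u_{N+j-1})$; your version $M_{1,1}=2\pi k R^{2k}|\gamma_{u_N}|^2$ is the consistent one, and in any case this does not affect the conclusion, which rests solely on $M_{1,2}\neq 0$.
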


The proof of the preceding corollary is contained in Section 5. It is deduced  from the general case of elliptic holes in dimension 2. Even in this general case, we find sufficient conditions for  eigenvalues splitting involving angles and coefficients appearing in the asymptotic behavior of eigenfunctions.

\section{Asymptotic behavior of $(u^a,u^b)$-capacities}

The aim of this section is to study the asymptotic behavior of $\mathrm{Cap}_\Omega(\varepsilon\overline{\omega},u^a,u^b)$ as $\varepsilon \to 0$. To reach  this goal, we consider the case of dimension $d=2$ and we assume some smoothness  on the sets and some regularity on the functions $u^a$ and $u^b$ (see assumptions (A1)-(A4)). We recall here these assumptions. We will work in the frame of Schauder classes: we take 
\[
\alpha\in]0,1[\,, 
\] 
 and we assume that
\begin{equation}\label{e1}
\begin{split}
&\text{$\Omega$ and $\omega$ are open bounded connected subsets of $\mathbb{R}^{2}$ of}\\
&\text{class $C^{1,\alpha}$ such that $\mathbb{R}^{{2}}\setminus\overline{\Omega}$  and $\mathbb{R}^{{2}}\setminus\overline{\omega}$ are connected,}\\
&\text{and  such that the origin  $0$ of $\mathbb{R}^{{2}}$ belongs both to $\Omega$ and $\omega$.}
\end{split}
\end{equation} 
 For the definition of functions and sets of the Schauder classes $C^{0,\alpha}$ and $C^{1,\alpha}$ we refer for example to Gilbarg and Trudinger~\cite[\S6.2]{GiTr01}.  Condition \eqref{e1} implies that there exists a real number $\varepsilon_0$ such that
\[
\varepsilon_0>0\ \mathrm{and\ }\ \varepsilon\overline{\omega} \subseteq \Omega\ \mathrm{for\ all}\ \varepsilon\in]-\varepsilon_0,\varepsilon_0[\,.
\]  
Then we set
\[
\Omega_\varepsilon\equiv\Omega\setminus K_\eps
\quad \text{where}\quad K_\eps\equiv \eps\overline\omega
\qquad\quad\forall\varepsilon\in]-\varepsilon_0,\varepsilon_0[\,.
\]
Clearly, $\Omega_\varepsilon$ is an open bounded connected subset of $\mathbb{R}^{2}$ of class $C^{1,\alpha}$ for all $\varepsilon\in]-\varepsilon_0,\varepsilon_{0}[\setminus\{0\}$. The boundary
$\partial \Omega_\varepsilon$ of $\Omega_\varepsilon$ is the union of the two connected components $\partial \Omega$ and $\partial (\varepsilon \omega)=\varepsilon\partial\omega$, for all
$\varepsilon\in]-\varepsilon_0,\varepsilon_{0}[$. We also note that $\Omega_0=\Omega\setminus\{0\}$. Moreover, $\{ \eps\overline\omega \}_{\eps>0}$ is a family of compact sets concentrating to the origin in the sense of \cite[Definition 1.2]{AbFeHiLe2019}.

As we have mentioned, we also need some regularity on the functions $u^a, u^b$: namely, we ask that
\begin{equation}\label{eq:assf}
\text{$u^a, u^b \in H^1(\Omega)$ are analytic in a neighborhood of $0$.}
\end{equation}
Our aim is twofold. On one side, we wish to obtain accurate and explicit expansions for $\mathrm{Cap}_\Omega(\varepsilon\overline{\omega},u^a,u^b)$ in terms of the parameter $\varepsilon$. On the other side, we also wish to emphasize the dependence on the geometric data of the problem (\i.e.,  $\Omega$ and $\omega$) and on the functions $u^a$ and $u^b$ on  $\mathrm{Cap}_\Omega(\varepsilon\overline{\omega},u^a,u^b)$. 

We observe that in this section we confine to the two-dimensional case. We exploit tools from potential theory and, as it happens often in this framework, the two-dimensional case and the one of dimension equal to or greater than three require a different analysis. This is mainly due to the different aspect of the fundamental solution of the Laplace equation: a logarithmic function of the $|x|$ if the dimension is two and a multiple of $|x|^{2-d}$ if the dimension $d$ is equal to or greater than three.

\subsection{Our strategy: the functional analytic approach}

In order to study the asymptotic behavior of  $\mathrm{Cap}_\Omega(\varepsilon\overline{\omega},u^a,u^b)$ as $\varepsilon \to 0$, we proceed as in \cite{AbBoLeMu} and we adopt the Functional Analytic Approach proposed by Lanza de Cristoforis \cite{La02, La04} for the analysis of singular perturbation problems in perforated domains (see also the monograph \cite{DaLaMu21} for a detailed presentation of the method). By applying this approach, one can deduce the possibility to represent the solution or related functionals as convergent power series. 

To analyze $\mathrm{Cap}_\Omega(\varepsilon\overline{\omega},u^a,u^b)$, we modify the results of \cite{AbBoLeMu}, where we considered the $u$-capacity $\mathrm{Cap}_\Omega(\varepsilon\overline{\omega},u)$. Although the modifications are quite straightforward, they require some attention to write out explicitly all the coefficients in the asymptotic expansions. For this reason, for the sake of clarity and completeness, we decided to include  in the present paper all the modified statements and some of the proofs.

By assumption \eqref{eq:assf}  on the analyticity of $u^a$ and $u^b$ together with analyticity results for the composition operator (see  B\"{o}hme and Tomi~\cite[p.~10]{BoTo73}, 
Henry~\cite[p.~29]{He82}, Valent~\cite[Thm.~5.2, p.~44]{Va88}), we deduce that, possibly shrinking $\varepsilon_0$, there exists two real analytic maps $U^a_\#$, $U^b_\#$ from $]-\varepsilon_0,\varepsilon_0[$ to $C^{1,\alpha}(\partial \omega)$ such that
\begin{equation}\label{eq:Usharp}
u^a(\varepsilon t)= U^a_\#[\varepsilon](t)\, ,\qquad u^b(\varepsilon t)= U^b_\#[\varepsilon](t) \, , \qquad \forall t \in \partial \omega\, ,\forall \varepsilon \in ]-\varepsilon_0,\varepsilon_0[
\end{equation}
(see Deimling~\cite[\S 15]{De85} for the 
definition and properties of analytic maps). Then for all $\varepsilon\in]-\varepsilon_{0},\varepsilon_{0}[ \setminus\{0\}$, we denote by $u^a_\varepsilon$ and $u^b_\varepsilon$  the unique solutions in $C^{1,\alpha}(\overline{\Omega_\varepsilon})$ of the  problems
\begin{equation}\label{eq:direpsa}
\left\{
\begin{array}{ll}
\Delta u^a_\varepsilon=0&\text{ in }\Omega_\varepsilon\,,\\
u^a_\varepsilon(x)=0&\text{ for all }x\in\partial\Omega\,,\\
u^a_\varepsilon(x)=U^a_\#[\varepsilon](x/\varepsilon)&\text{ for all }x\in\varepsilon\partial\omega\,
\end{array}
\right.
\end{equation}
and
\[
\left\{
\begin{array}{ll}
\Delta u^b_\varepsilon=0&\text{ in }\Omega_\varepsilon\,,\\
u^b_\varepsilon(x)=0&\text{ for all }x\in\partial\Omega\,,\\
u^b_\varepsilon(x)=U^b_\#[\varepsilon](x/\varepsilon)&\text{ for all }x\in\varepsilon\partial\omega\,,
\end{array}
\right.
\]
respectively. Clearly,
\[
\begin{split}
&V_{\varepsilon {\overline{\omega}}, u^a}(x)= u^a_\varepsilon(x)\, , \qquad V_{\varepsilon {\overline{\omega}}, u^b}(x)= u^b_\varepsilon(x)\, , \qquad \forall x \in \Omega_\varepsilon\, ,\forall \varepsilon \in ]-\varepsilon_0,\varepsilon_0[\setminus \{0\}\, ,\\
&V_{\varepsilon {\overline{\omega}}, u^a}(x)= u^a(x)\, ,\qquad V_{\varepsilon {\overline{\omega}}, u^b}(x)= u^b(x)\, , \qquad \forall x \in \varepsilon \overline{\omega}\, ,\forall \varepsilon \in ]-\varepsilon_0,\varepsilon_0[\setminus \{0\}\, {.}
\end{split}
\]
 Accordingly, by the Divergence Theorem, we have that
\begin{equation}\label{eq:div}
\begin{split}
\mathrm{Cap}_\Omega(\varepsilon\overline{\omega},u^a,u^b)&=\int_{\Omega_\varepsilon}\nabla u^a_\varepsilon \cdot \nabla u^b_\varepsilon \, dx + \int_{\varepsilon \omega}\nabla u^a \cdot \nabla u^b \, dx\\
&= -\int_{\partial (\varepsilon \omega)}\frac{\partial u^a_\varepsilon}{\partial \nu_{\varepsilon \omega}}u^b_\varepsilon \, d\sigma+ \varepsilon^2 \int_{\omega}(\nabla u^a)(\varepsilon t) \cdot (\nabla u^b)(\varepsilon t) \, dt\\
&= -\int_{\partial \omega}\nu_{\omega}(t)\cdot \nabla \Big(u^a_\varepsilon(\varepsilon t)\Big) u^b(\varepsilon t) \, d\sigma_t+ \varepsilon^2 \int_{\omega}(\nabla u^a)(\varepsilon t) \cdot (\nabla u^b)(\varepsilon t) \, dt \, ,
\end{split}
\end{equation}
for all $\varepsilon \in ]-\varepsilon_0,\varepsilon_0[\setminus \{0\}$. Here above the symbols $\nu_\omega$ and $\nu_{\varepsilon \omega}$ denote the outward unit normal to $\partial \omega$ and to $\partial (\varepsilon \omega)$, respectively.

As we have mentioned, our goal is to provide a fully constructive and complete asymptotic expansion for $\mathrm{Cap}_\Omega(\varepsilon\overline{\omega},u^a,u^b)$ as $\varepsilon \to 0$.  As  in \cite{AbBoLeMu}, to obtain an asymptotic expansion of $\mathrm{Cap}_\Omega(\varepsilon\overline{\omega},u^a,u^b)$, we follow the method developed in \cite{DaMuRo15} for the solution of the Dirichlet problem in a planar perforated domain.  
By the computation in \eqref{eq:div},  the quantity $\mathrm{Cap}_\Omega(\varepsilon\overline{\omega},u^a,u^b)$ can be expressed  as the sum of two integrals:
\[
\varepsilon^2 \int_{\omega}(\nabla u^a)(\varepsilon t) \cdot (\nabla u^b)(\varepsilon t) \, dt 
\]
and the opposite of the integral on $\partial  \omega$ of the function  
\begin{equation}\label{eq:fun}
t \mapsto \nu_{\omega}(t)\cdot \nabla \Big(u^a_\varepsilon(\varepsilon t)\Big) u^b(\varepsilon t)\, .
\end{equation}

\subsection{Classical notions of potential theory}\label{prel1}

Formula \eqref{eq:div} shows that $\mathrm{Cap}_\Omega(\varepsilon\overline{\omega},u^a,u^b)$ can be expressed in terms of the gradient of the function $u^a_\varepsilon$ which solve problem \eqref{eq:direpsa} and of the traces of the (given) functions $u^a$ and $u^b$ As a consequence, we need to understand the behavior of the solution to problem \eqref{eq:direpsa} as $\varepsilon \to 0$ and, in order to do so, we shall exploit the approach of \cite{AbBoLeMu} based on integral operators, which allows to convert a boundary value problem into a set of integral equations defined on the boundary of the domain.

{ These operators are integral operators whose kernel is the fundamental solution of the Laplacian or its normal derivative. Therefore, we introduce    the fundamental solution $S$ {of $\Delta\equiv \sum_{j=1}^2\partial_{j}^2$} in $\mathbb{R}^2$ as the function from $\mathbb{R}^2\setminus\{0\}$ to $\mathbb{R}$ defined by
\[
S(x)\equiv\frac{1}{2\pi}\log\,|x|\qquad\forall x\in\mathbb{R}^2\setminus\{0\}\,.
\] 

Now let $\mathcal{O}$ be an open bounded subset of $\mathbb{R}^2$ of class $C^{1,\alpha}$.

We begin by introducing the single layer potential. If $\phi\in C^{0,\alpha}(\partial\mathcal{O})$,   then the single layer potential $v[\partial\mathcal{O},\phi]$ with density $\phi$ is the function defined by
\[
v[\partial\mathcal{O},\phi](x)\equiv\int_{\partial\mathcal{O}}\phi(y)S(x-y)\,d\sigma_y\qquad\forall x\in\mathbb{R}^2\\,
\] 
where $d\sigma$ denotes the arc length  element on $\partial\mathcal{O}$. The function $v[\partial\mathcal{O},\phi]$ is  continuous from $\mathbb{R}^2$ to $\mathbb{R}$. The restriction $v^+[\partial\mathcal{O},\phi]\equiv v[\partial\mathcal{O},\phi]_{|\overline{\mathcal{O}}}$ belongs to $C^{1,\alpha}(\overline{\mathcal{O}})$. Moreover, if  we denote by $C^{1,\alpha}_{\mathrm{loc}}(\mathbb{R}^2\setminus\mathcal{O})$  the space of functions on $\mathbb{R}^2\setminus\mathcal{O}$ whose restrictions to $\overline{\mathcal{U}}$ belong to  $C^{1,\alpha}(\overline{\mathcal{U}})$ for all open bounded subsets $\mathcal{U}$ of $\mathbb{R}^2\setminus\mathcal{O}$, then $v^-[\partial\mathcal{O},\phi]\equiv v[\partial\mathcal{O},\phi]_{|\mathbb{R}^2\setminus\mathcal{O}}$  belongs to $C^{1,\alpha}_{\mathrm{loc}}(\mathbb{R}^2\setminus\mathcal{O})$. 

Instead, for a function $\psi\in C^{1,\alpha}(\partial\mathcal{O})$, we denote  by $w[\partial\mathcal{O},\psi]$ the double layer potential with density $\psi$, namely
\[
w[\partial\mathcal{O},\psi](x)\equiv-\int_{\partial\mathcal{O}}\psi(y)\;\nu_{\mathcal{O}}(y)\cdot\nabla S(x-y)\,d\sigma_y\qquad\forall x\in\mathbb{R}^2\,,
\] 
where $\nu_\mathcal{O}$ denotes the outer unit normal to $\partial\mathcal{O}$  and the symbol $\cdot$ denotes the scalar product in $\mathbb{R}^2$. Then the restriction $w[\partial\mathcal{O},\psi]_{|\mathcal{O}}$ extends to a function $w^+[\partial\mathcal{O},\psi]$ of $C^{1,\alpha}(\overline{\mathcal{O}})$ and  the restriction $w[\partial\mathcal{O},\psi]_{|\mathbb{R}^2\setminus\overline{\mathcal{O}}}$ extends to a function $w^-[\partial\mathcal{O},\psi]$ of $C^{1,\alpha}_{\mathrm{loc}}(\mathbb{R}^2\setminus\mathcal{O})$.

In order to describe the properties of the trace of the double layer potential on $\partial \mathcal{O}$ and of the normal derivative of the single layer potential, we introduce the boundary integral operators  $W_\mathcal{O}$ and $W^*_\mathcal{O}$ as follows:
\[
W_\mathcal{O}[\psi](x)\equiv -\int_{\partial\mathcal{O}}\psi(y)\;\nu_{\mathcal{O}}(y)\cdot\nabla S(x-y)\, d\sigma_y\qquad\forall x\in\partial\mathcal{O}\,,
\] for all $\psi\in C^{1,\alpha}(\partial\mathcal{O})$, and
\[
W^*_\mathcal{O}[\phi](x)\equiv \int_{\partial\mathcal{O}}\phi(y)\;\nu_{\mathcal{O}}(x)\cdot\nabla S(x-y)\, d\sigma_y\qquad\forall x\in\partial\mathcal{O}\,,
\] for all $\phi\in C^{0,\alpha}(\partial\mathcal{O})$. Then $W_\mathcal{O}$ is a compact operator  from   $C^{1,\alpha}(\partial\mathcal{O})$  to itself and $W^*_\mathcal{O}$   is a compact operator  from  $C^{0,\alpha}(\partial\mathcal{O})$ to itself (see Schauder \cite{Sc31} and \cite{Sc32}). The operators $W_\mathcal{O}$ and $W^*_\mathcal{O}$ are adjoint one to the other with respect to the duality on $C^{1,\alpha}(\partial\mathcal{O})\times C^{0,\alpha}(\partial\mathcal{O})$ induced by the inner product of the Lebesgue space $L^2(\partial\mathcal{O})$ (see ,  {\it e.g.}, Kress \cite[Chap.~4]{Kr14}). For the theory of dual systems and the corresponding Fredholm Alternative Principle, we refer the reader to Kress \cite{Kr14} and Wendland \cite{We67,We70}. By means of the operators $W_\mathcal{O}$ and $W^*_\mathcal{O}$, we can describe the traces $w^\pm[\partial\mathcal{O},\psi]_{|\partial\mathcal{O}}$ and the normal derivatives $\nu_\mathcal{O}\cdot\nabla v^\pm[\partial\mathcal{O},\phi]_{|\partial\mathcal{O}}$:
\begin{align*}
w^\pm[\partial\mathcal{O},\psi]_{|\partial\mathcal{O}}&=\pm\frac{1}{2}\psi+W_\mathcal{O}[\psi]&\forall\psi\in C^{1,\alpha}(\partial\mathcal{O})\,,\\
\nu_\mathcal{O}\cdot\nabla v^\pm[\partial\mathcal{O},\phi]_{|\partial\mathcal{O}}&=\mp\frac{1}{2}\phi+W^*_\mathcal{O}[\phi]&\forall\phi\in C^{0,\alpha}(\partial\mathcal{O})
\end{align*}
(see, {\it e.g.}, Folland \cite[Chap.~3]{Fo95}).

We shall need to consider subspaces of $C^{0,\alpha}(\partial \mathcal{O})$ and of $C^{1,\alpha}(\partial \mathcal{O})$, consisting of functions with zero integral on $\partial \mathcal{O}$. Therefore,  we set
\[
C^{k,\alpha}(\partial \mathcal{O})_0\equiv \Bigg\{f \in C^{k,\alpha}(\partial \mathcal{O})\colon \int_{\partial \mathcal{O}}f\, d\sigma=0\Bigg\} \qquad \text{for $k=0,1$}\, .
\] 
}

\subsection{An integral formulation of the boundary value problem}

To convert problem \eqref{eq:direpsa} into a system of integral equations, we follow the strategy of Lanza de Cristoforis \cite{La08} and of \cite{DaMuRo15}. As in \cite{AbBoLeMu}, we divide the problem in a part which can be solved in terms of the double layer potential and a part which will be represented by a single layer potential. 
Now we proceed as in \cite{DaMuRo15} and we introduce the map $M\equiv(M^o,M^i,M^c)$  from $]-\varepsilon_0,\varepsilon_0[\times C^{0,\alpha}(\partial\Omega)\times C^{0,\alpha}(\partial\omega)$ to $C^{0,\alpha}(\partial\Omega)\times C^{0,\alpha}(\partial\omega)_0\times\mathbb{R}$   by setting
\begin{align*}
&M^o[\varepsilon,\rho^o,\rho^i](x) \equiv \frac{1}{2}\rho^o(x)+W^*_{\Omega}[\rho^o](x)+\int_{\partial\omega}\rho^i(s)\;\nu_{\Omega}(x)\cdot\nabla S(x-\varepsilon s)\, d\sigma_s&\forall x\in\partial\Omega\,,\\
&M^i[\varepsilon,\rho^o,\rho^i](t)\equiv\frac{1}{2}\rho^i(t)-W^*_{\omega}[\rho^i](t)-\varepsilon \int_{\partial\Omega}\rho^o(y)\;\nu_{\omega}(t)\cdot\nabla S(\varepsilon t- y)\, d\sigma_y&\forall t\in\partial\omega\,,\\
&M^c[\varepsilon,\rho^o,\rho^i]\equiv \int_{\partial\omega}\rho^i\, d\sigma-1\,,
\end{align*} 
for all $(\varepsilon,\rho^o,\rho^i) \in ]-\varepsilon_0,\varepsilon_0[\times C^{0,\alpha}(\partial\Omega)\times C^{0,\alpha}(\partial\omega)$.
Then we  state the following result of Lanza de Cristoforis \cite[\S3]{La08} (see also \cite[Prop.~4.1]{DaMuRo15}).

\begin{prop}\label{rhoeps}
The following statements hold.
\begin{itemize}
\item[(i)] The map $M$ is real analytic. 
\item[(ii)] If $\varepsilon\in]-\varepsilon_0,\varepsilon_0[$, then there exists a unique pair $(\rho^o[\varepsilon],\rho^i[\varepsilon])\in C^{0,\alpha}(\partial\Omega)\times C^{0,\alpha}(\partial\omega)$ such that $M[\varepsilon,\rho^o[\varepsilon],\rho^i[\varepsilon]]=0$. \item[(iii)] The map from $]-\varepsilon_0,\varepsilon_0[$ to $C^{0,\alpha}(\partial\Omega)\times C^{0,\alpha}(\partial\omega)$ which takes $\varepsilon$ to   $(\rho^o[\varepsilon],\rho^i[\varepsilon])$ is real analytic.
\end{itemize}
\end{prop}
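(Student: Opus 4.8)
\emph{Overall approach.} The statement is quoted from \cite{La08} (see also \cite[Prop.~4.1]{DaMuRo15}), and the natural way to prove it is the functional analytic approach: establish (i) by invoking real-analyticity of integral operators with real-analytic kernels, and then observe that, since $M[\eps,\cdot,\cdot]$ is \emph{affine} in $(\rho^o,\rho^i)$, statements (ii)--(iii) reduce to showing that its linear part (equivalently, the partial differential $\partial_{(\rho^o,\rho^i)}M[\eps,\cdot,\cdot]$, which I denote $\mathcal M[\eps]$) is a linear homeomorphism onto the target space for every $\eps\in\,]-\eps_0,\eps_0[$. Indeed, writing $M[\eps,\rho^o,\rho^i]=\mathcal M[\eps](\rho^o,\rho^i)-(0,0,1)$, statement (ii) becomes the bijectivity of $\mathcal M[\eps]$, and statement (iii) then follows at once: by (i) the map $\eps\mapsto\mathcal M[\eps]$ is real-analytic with values in the Banach space of bounded operators, inversion is real-analytic on the open set of invertible operators, and $(\rho^o[\eps],\rho^i[\eps])=\mathcal M[\eps]^{-1}(0,0,1)$.

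\emph{Proof of (i).} The only non-elementary ingredients of $M$ are the two coupling operators with kernels $(x,s)\mapsto\nu_\Omega(x)\cdot\nabla S(x-\eps s)$ (for $x\in\partial\Omega$, $s\in\partial\omega$) and $(t,y)\mapsto\nu_\omega(t)\cdot\nabla S(\eps t-y)$ (for $t\in\partial\omega$, $y\in\partial\Omega$). Since $\eps\overline\omega$ stays at positive distance from $\partial\Omega$ for $\eps$ near any compact subinterval of $]-\eps_0,\eps_0[$, the arguments $x-\eps s$ and $\eps t-y$ are bounded away from $0$, so these kernels are real-analytic jointly in $\eps$ and the space variables. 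I would then invoke the standard results on real-analyticity of integral operators with real-analytic kernels acting between Schauder spaces on $C^{1,\alpha}$ boundaries (see \cite{La08,DaMuRo15,DaLaMu21}); combined with the facts that $W^*_\Omega$, $W^*_\omega$ are fixed bounded linear operators and $M^c$ is affine, this gives the real-analyticity of $M$.

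\emph{Proof of (ii)--(iii): bijectivity of $\mathcal M[\eps]$.} First, $\mathcal M[\eps]$ is a compact perturbation of the ``decoupled'' operator $(\rho^o,\rho^i)\mapsto\bigl((\tfrac12 I+W^*_\Omega)\rho^o,\ (\tfrac12 I-W^*_\omega)\rho^i,\ \int_{\partial\omega}\rho^i\,d\sigma\bigr)$, because the off-diagonal terms have smooth kernels, hence map into $C^{1,\alpha}$ and are compact in $C^{0,\alpha}$. By classical potential theory $\tfrac12 I+W^*_\Omega$ is an isomorphism of $C^{0,\alpha}(\partial\Omega)$, and $\rho^i\mapsto\bigl((\tfrac12 I-W^*_\omega)\rho^i,\ \int_{\partial\omega}\rho^i\,d\sigma\bigr)$ is an isomorphism of $C^{0,\alpha}(\partial\omega)$ onto $C^{0,\alpha}(\partial\omega)_0\times\R$ --- this is exactly the reason for the normalization $\int_{\partial\omega}\rho^i=1$, since the kernel of $\tfrac12 I-W^*_\omega$ is the one-dimensional span of the equilibrium density of $\omega$, which has non-zero total mass, while the range of $\tfrac12 I-W^*_\omega$ is precisely $C^{0,\alpha}(\partial\omega)_0$. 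Hence the decoupled operator is an isomorphism and $\mathcal M[\eps]$ is Fredholm of index $0$, so it suffices to prove injectivity. Suppose $\mathcal M[\eps](\rho^o,\rho^i)=0$; then in particular $\int_{\partial\omega}\rho^i\,d\sigma=0$. Set
\[V(z)\equiv v[\partial\Omega,\rho^o](z)+v[\partial\omega,\rho^i](z/\eps),\qquad z\in\R^2\,.\]
Because $\int_{\partial\omega}\rho^i\,d\sigma=0$, the second term is the single layer potential on $\partial(\eps\omega)$ generated by the rescaled density $\eps^{-1}\rho^i(\cdot/\eps)$ with its logarithmic growth at infinity suppressed, so $V$ is harmonic in $\Omega_\eps$, in $\eps\omega$ and in $\R^2\setminus\overline\Omega$. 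A direct computation shows that $M^o=0$ says $\nu_\Omega\cdot\nabla V=0$ on $\partial\Omega$ from outside, while $M^i=0$ says $\nu_{\eps\omega}\cdot\nabla V=0$ on $\partial(\eps\omega)$ from inside. Integrating $\nu_\Omega\cdot\nabla V=0$ over $\partial\Omega$ and applying the divergence theorem on a large annulus (using the two-dimensional decay) forces $\int_{\partial\Omega}\rho^o\,d\sigma=0$ as well; an energy identity on $\R^2\setminus\overline\Omega$ then gives $V\equiv0$ there, hence $V=0$ on $\partial\Omega$; an energy identity on $\eps\omega$ gives $V\equiv c_0$ constant there; and an energy identity on $\Omega_\eps$ --- using the jump relations for the single layer, the boundary data $0$ on $\partial\Omega$ and $c_0$ on $\partial(\eps\omega)$, and $\int_{\partial(\eps\omega)}\eps^{-1}\rho^i(\cdot/\eps)\,d\sigma=0$ --- gives $V\equiv0$ on $\overline{\Omega_\eps}$, so $c_0=0$ and $V\equiv0$ on $\R^2$. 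The jump relations for the single layer then yield $\rho^o=0$ and $\rho^i=0$. This proves injectivity, hence bijectivity, of $\mathcal M[\eps]$, and with it (ii) and (iii).

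\emph{Main obstacle.} The delicate step is the injectivity of $\mathcal M[\eps]$: one must build the auxiliary potential $V$ so that the two coupling terms of $M$ recombine into genuine normal derivatives, control the two-dimensional logarithmic growth at infinity --- which is precisely why the constraint $\int_{\partial\omega}\rho^i=0$ coming from $M^c$ is indispensable to cancel the $\log\eps$ contributions --- and then run three successive energy/divergence-theorem arguments, in the exterior domain $\R^2\setminus\overline\Omega$, in the hole $\eps\omega$, and in $\Omega_\eps$, in the right order. For all the computational details one may refer directly to \cite{La08} and \cite[Prop.~4.1]{DaMuRo15}.
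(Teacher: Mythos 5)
The paper does not prove Proposition~\ref{rhoeps}: it is simply quoted from Lanza de Cristoforis \cite{La08} (see also \cite[Prop.~4.1]{DaMuRo15}), with no argument given. Your proposal reconstructs exactly the functional-analytic argument of those references, and the structure is sound: the observation that $M[\eps,\cdot,\cdot]$ is affine in $(\rho^o,\rho^i)$, so that (ii)--(iii) reduce to bijectivity of the linear part $\mathcal M[\eps]$ plus real-analyticity of inversion; the real-analyticity of the coupling kernels because $x-\eps s$ and $\eps t-y$ stay away from $0$; the Fredholm-index-$0$ reduction to injectivity; and, for injectivity, the auxiliary potential $V(z)=v[\partial\Omega,\rho^o](z)+v[\partial\omega,\rho^i](z/\eps)$ together with the cascaded energy/divergence-theorem arguments on $\R^2\setminus\overline\Omega$, $\eps\omega$, and $\Omega_\eps$. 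The role of the side constraint $\int_{\partial\omega}\rho^i\,d\sigma=0$ in suppressing the $\log|\eps|$ term, the deduction $\int_{\partial\Omega}\rho^o\,d\sigma=0$ from the flux identity at infinity, and the final jump-relation step are all correctly placed.

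Two small points you should tighten. First, your injectivity argument via the auxiliary potential $V$ is formulated only for $\eps\neq 0$ (the expression $z/\eps$ has no meaning at $\eps=0$), whereas the proposition asserts unique solvability for all $\eps\in\,]-\eps_0,\eps_0[$; at $\eps=0$ the system decouples and injectivity follows directly from the injectivity of $\tfrac12 I+W^*_\Omega$ on $C^{0,\alpha}(\partial\Omega)$ and from the pair $\bigl(\tfrac12 I-W^*_\omega,\ \int_{\partial\omega}\cdot\,d\sigma\bigr)$ being an isomorphism onto $C^{0,\alpha}(\partial\omega)_0\times\R$, so this is a one-line addition but it must be said. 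Second, for $\eps<0$ the set $\eps\omega=-|\eps|\omega$ is a reflection of $|\eps|\omega$ and the orientation of $\partial\omega$ is reversed under the rescaling $z=\eps t$; the jump and energy computations go through, but the sign bookkeeping has to be done with care (this is handled explicitly in \cite{La08}). With these points addressed, your reconstruction is a correct and complete proof of the statement.
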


\begin{rem}\label{rem:rhoeps}
For each $\varepsilon \in ]-\varepsilon_0,\varepsilon_0[\setminus \{0\}$, let $\tau_\varepsilon$ be defined by $\tau_\varepsilon(x)\equiv\rho^o[\varepsilon](x)$ for all $x \in \partial \Omega$ and $\tau_\varepsilon(x)\equiv |\varepsilon|^{-1}\rho^i[\varepsilon](x/\varepsilon)$ for all $x \in \partial (\varepsilon \omega)$. Then
\[
\frac{1}{2}\tau_\varepsilon+W^*_{\Omega_\varepsilon}[\tau_\varepsilon]=0 \, , \qquad \int_{\partial (\varepsilon \omega)}\tau_\varepsilon\, d\sigma=1\, ,
\]
for all $\varepsilon \in ]-\varepsilon_0,\varepsilon_0[\setminus \{0\}$.
\end{rem}

We now consider the part which can be solved by the double layer potential.  For  $\varepsilon \in ]-\varepsilon_0,\varepsilon_0[\setminus \{0\}$, we define the boundary datum $g^a_\varepsilon$ by setting
\[
g^a_\varepsilon(x)\equiv 0 \quad \forall x \in \partial \Omega\, , \qquad g^a_\varepsilon(x)=U^a_\#[\varepsilon](x/\varepsilon)-\int_{\partial (\varepsilon \omega)}U^a_\#[\varepsilon](y/\varepsilon)\tau_\varepsilon(y)\, d\sigma_y \quad \forall x \in \partial (\varepsilon \omega)\, ,
\]
Standard Fredholm theory and classical potential theory imply that for $\varepsilon \in ]-\varepsilon_0,\varepsilon_0[\setminus \{0\}$ the function $g^a_\varepsilon$ belongs to the image of the trace of the double layer potential (for the definition of $U^a_\#$ see \eqref{eq:Usharp}). As in \cite{DaMuRo15}, we then define the map
$\Lambda\equiv(\Lambda^o,\Lambda^i)$  from $]-\varepsilon_0,\varepsilon_0[\times  C^{1,\alpha}(\partial\Omega)\times C^{1,\alpha}(\partial\omega)_0$ to $C^{1,\alpha}(\partial\Omega)\times C^{1,\alpha}(\partial\omega)$  by
\begin{align*}
&\Lambda^o[\varepsilon,\theta^o,\theta^i](x)\equiv\frac{1}{2}\theta^o(x)+W_{\Omega}[\theta^o](x) \\
&\qquad\qquad\qquad\qquad +\varepsilon\int_{\partial\omega}\theta^i(s)\;\nu_{\omega}(s)\cdot\nabla S(x-\varepsilon s)\, d\sigma_s&\forall x\in\partial\Omega\,,\\
&\Lambda^i[\varepsilon,\theta^o,\theta^i](t)\equiv\frac{1}{2}\theta^i(t)-W_{\omega}[\theta^i](t)+w[\partial\Omega,\theta^o](\varepsilon t)\\
\nonumber
&\qquad\qquad\qquad\qquad -U^a_\#[\varepsilon](t)+\int_{\partial\omega}U^a_\#[\varepsilon]\rho^i[\varepsilon]\,d\sigma&\forall t\in\partial\omega\, ,
\end{align*} 
for all $(\varepsilon,\theta^o,\theta^i) \in ]-\varepsilon_0,\varepsilon_0[\times  C^{1,\alpha}(\partial\Omega)\times C^{1,\alpha}(\partial\omega)_0$. Then we have the following result of Lanza de Cristoforis \cite[\S4]{La08} on the regularity of $\Lambda$ (see  \cite[Prop.~4.3]{DaMuRo15}).

\begin{prop}\label{thetaeps} 
The following statements hold.
\begin{itemize}
\item[(i)] The map $\Lambda$ is real analytic.
\item[(ii)] If $\varepsilon\in ]-\varepsilon_0,\varepsilon_0[$, then there exists a unique pair $(\theta^o[\varepsilon],\theta^i[\varepsilon])\in C^{1,\alpha}(\partial\Omega)\times C^{1,\alpha}(\partial\omega)_0$ such that $\Lambda[\varepsilon,\theta^o[\varepsilon],\theta^i[\varepsilon]]=0$.
\item[(iii)]  The map from $]-\varepsilon_0,\varepsilon_0[$ to $C^{1,\alpha}(\partial\Omega)\times C^{1,\alpha}(\partial\omega)_0$ which takes $\varepsilon$ to $(\theta^o[\varepsilon], \theta^i[\varepsilon])$ is real analytic.
\end{itemize}
\end{prop}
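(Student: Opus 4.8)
\medskip
\noindent\textbf{Proof plan.}
The plan is to recognise Proposition~\ref{thetaeps} as an instance of the scheme of Lanza de Cristoforis \cite{La08} (see also \cite[Prop.~4.3]{DaMuRo15}), adapted to our boundary datum, and to prove it by combining three ingredients: real analyticity of the integral operators depending on the parameter $\varepsilon$, Fredholm theory together with a uniqueness argument from classical potential theory, and the real-analytic implicit function theorem. As a preliminary reduction, I would use the change of variables $y=\varepsilon s$ to rewrite $\varepsilon\int_{\partial\omega}\theta^i(s)\,\nu_\omega(s)\cdot\nabla S(x-\varepsilon s)\,d\sigma_s=-w[\partial(\varepsilon\omega),\theta^i(\cdot/\varepsilon)](x)$, so that $\Lambda[\varepsilon,\theta^o,\theta^i]=0$ is exactly the boundary-integral reformulation of a Dirichlet problem of the type \eqref{eq:direpsa} whose solution is sought as a sum of two double layer potentials, the single layer contribution having already been extracted through $\rho^o,\rho^i$ in Proposition~\ref{rhoeps} and Remark~\ref{rem:rhoeps}.

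For (i), I would decompose $\Lambda$ into elementary blocks. The operators $\tfrac12 I+W_\Omega$ on $C^{1,\alpha}(\partial\Omega)$ and $\tfrac12 I-W_\omega$ on $C^{1,\alpha}(\partial\omega)_0$ are fixed bounded linear operators, hence $\varepsilon$-analytic. The terms with kernel $\nu_\Omega(x)\cdot\nabla S(x-\varepsilon s)$ (with $x\in\partial\Omega$, $s\in\partial\omega$) and the evaluation $\theta^o\mapsto w[\partial\Omega,\theta^o](\varepsilon\,\cdot)$ involve arguments $x-\varepsilon s$ and $\varepsilon t$ which, for $|\varepsilon|<\varepsilon_0$, stay in a fixed compact subset of the domain of real-analyticity of $S$ (respectively of the harmonic function $w[\partial\Omega,\theta^o]$ near $0$, since $0\in\Omega$); hence by the standard results on analyticity of integral operators with real-analytic kernels depending on a parameter and of composition and rescaling operators (see \cite{La08,DaMuRo15,DaLaMu21}) these are real-analytic in $\varepsilon$. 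Finally $U^a_\#[\cdot]$ is real-analytic by \eqref{eq:Usharp} and $\rho^i[\cdot]$ by Proposition~\ref{rhoeps}(iii), so the remaining additive terms of $\Lambda^i$ — including the scalar $\int_{\partial\omega}U^a_\#[\varepsilon]\rho^i[\varepsilon]\,d\sigma$, a product of analytic maps — are real-analytic. Combining, $\Lambda$ is real-analytic, which is (i).

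Next I would observe that $\Lambda[\varepsilon,\cdot,\cdot]$ is affine in $(\theta^o,\theta^i)$, with linear part $L_\varepsilon:=d_{(\theta^o,\theta^i)}\Lambda[\varepsilon,\cdot,\cdot]$ independent of the base point, and prove that $L_\varepsilon$ is an isomorphism for every $\varepsilon\in]-\varepsilon_0,\varepsilon_0[$. Fredholmness of index zero is immediate, since $L_\varepsilon$ is a compact perturbation of $(\theta^o,\theta^i)\mapsto(\tfrac12\theta^o,\tfrac12\theta^i)$. Injectivity is the potential-theoretic core: a pair $(\theta^o,\theta^i)\in\ker L_\varepsilon$ yields, via the identity of the first paragraph, the function $u:=w[\partial\Omega,\theta^o]-w[\partial(\varepsilon\omega),\theta^i(\cdot/\varepsilon)]$, harmonic in $\Omega_\varepsilon$; the equations $L_\varepsilon^o=0$ and $L_\varepsilon^i=0$, together with the jump relations for the double layer potential, say precisely that $u$ has vanishing trace on $\partial\Omega$ and on $\partial(\varepsilon\omega)$, so $u\equiv0$ by uniqueness for the Dirichlet problem in $\Omega_\varepsilon$; a further use of the jump relations then forces $\theta^o=0$ and $\theta^i=0$ (for $\varepsilon=0$ the system decouples and the same conclusion follows from invertibility of $\tfrac12 I+W_\Omega$ on $C^{1,\alpha}(\partial\Omega)$ and of $\tfrac12 I-W_\omega$ on $C^{1,\alpha}(\partial\omega)_0$). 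Hence $L_\varepsilon$ is bijective, so $\Lambda[\varepsilon,\cdot,\cdot]=0$ has the unique solution $(\theta^o[\varepsilon],\theta^i[\varepsilon])=-L_\varepsilon^{-1}(b_\varepsilon)$, where $b_\varepsilon$ is the $\varepsilon$-analytic additive term of $\Lambda$ — this is (ii); and (iii) then follows from the real-analytic implicit function theorem (or equivalently from analyticity of $\varepsilon\mapsto L_\varepsilon^{-1}$ on the open set of invertible operators composed with the analytic map $\varepsilon\mapsto b_\varepsilon$), giving real analyticity of $\varepsilon\mapsto(\theta^o[\varepsilon],\theta^i[\varepsilon])$.

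I expect the main obstacle to be the injectivity of $L_\varepsilon$ — more precisely, making rigorous the reduction of the homogeneous integral system to ``$u\equiv0$'' while keeping track of the normalisation $\int_{\partial(\varepsilon\omega)}\tau_\varepsilon\,d\sigma=1$ of Remark~\ref{rem:rhoeps}, which is what makes the double layer representation faithful (in dimension $2$ a double layer potential on $\partial\omega$ alone cannot represent an arbitrary exterior harmonic function), and checking that everything degenerates correctly at $\varepsilon=0$, where the exterior block becomes a genuine exterior Dirichlet problem on $\R^2\setminus\overline\omega$. By contrast, (i) is routine once the off-diagonal kernels are seen to be nonsingular for $|\varepsilon|<\varepsilon_0$, and (iii) is a black-box application of the implicit function theorem.
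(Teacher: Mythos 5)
The paper does not prove Proposition~\ref{thetaeps}; it cites Lanza de Cristoforis \cite[\S4]{La08} and \cite[Prop.~4.3]{DaMuRo15}, and your reconstruction correctly identifies the three ingredients those references use (analyticity of the $\varepsilon$-dependent integral operators, Fredholm theory plus potential-theoretic uniqueness, and the real-analytic implicit function theorem). Your reduction $\varepsilon\int_{\partial\omega}\theta^i(s)\,\nu_\omega(s)\cdot\nabla S(x-\varepsilon s)\,d\sigma_s=-w[\partial(\varepsilon\omega),\theta^i(\cdot/\varepsilon)](x)$, the jump-relation argument showing $u\equiv 0$ in $\Omega_\varepsilon$, and the remark that the zero-mean constraint is what makes the double-layer representation faithful in dimension $2$ are all on target.

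There is, however, a concrete gap in your Fredholm step. You claim ``Fredholmness of index zero is immediate, since $L_\varepsilon$ is a compact perturbation of $(\theta^o,\theta^i)\mapsto(\tfrac12\theta^o,\tfrac12\theta^i)$.'' But the domain of $\Lambda[\varepsilon,\cdot,\cdot]$ is $C^{1,\alpha}(\partial\Omega)\times C^{1,\alpha}(\partial\omega)_0$ while the codomain is the \emph{full} $C^{1,\alpha}(\partial\Omega)\times C^{1,\alpha}(\partial\omega)$, so the diagonal operator $(\theta^o,\theta^i)\mapsto(\tfrac12\theta^o,\tfrac12\theta^i)$ is injective with image of codimension one, hence Fredholm of index $-1$, and so is $L_\varepsilon$. (Equivalently: $1$ spans $\ker(\tfrac12 I-W_\omega)$ on $C^{1,\alpha}(\partial\omega)$ and $\rho^i_0$ spans $\ker(\tfrac12 I-W^*_\omega)$; restricting the domain to the zero-mean hyperplane kills the kernel but leaves the cokernel intact.) Consequently, bijectivity of $L_\varepsilon$ cannot be concluded from injectivity alone, and existence in (ii) requires verifying that the affine term $b_\varepsilon=(0,\,-U^a_\#[\varepsilon]+\int_{\partial\omega}U^a_\#[\varepsilon]\rho^i[\varepsilon]\,d\sigma)$ satisfies the solvability condition — this is precisely what the subtraction $\int_{\partial\omega}U^a_\#[\varepsilon]\rho^i[\varepsilon]\,d\sigma$ is engineered to ensure (at $\varepsilon=0$ one checks directly that $\int_{\partial\omega}b_0^i\,\rho^i_0\,d\sigma=0$, and for $\varepsilon\neq0$ this is the potential-theoretic statement, made in the paper, that $g^a_\varepsilon$ lies in the image of the double-layer trace on $\partial\Omega_\varepsilon$). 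The IFT application in (iii) must then be to $L_\varepsilon$ viewed as an isomorphism onto its (analytically varying, codimension-one) image, or handled via the affine structure; it does not follow from a black-box application to a map with invertible differential. Finally, the normalisation you single out, $\int_{\partial(\varepsilon\omega)}\tau_\varepsilon\,d\sigma=1$ from Remark~\ref{rem:rhoeps}, governs the single-layer density $\rho^i[\varepsilon]$; the constraint that actually removes the kernel from the double-layer block is $\theta^i\in C^{1,\alpha}(\partial\omega)_0$, and the constraint that fixes the cokernel obstruction is the specific form of $b_\varepsilon$.
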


\begin{rem}\label{rem:thetaeps}
For each $\varepsilon \in ]-\varepsilon_0,\varepsilon_0[\setminus \{0\}$, let $\mu_\varepsilon$ be defined by $\mu_\varepsilon(x)\equiv\theta^o[\varepsilon](x)$ for all $x \in \partial \Omega$ and $\mu_\varepsilon(x)\equiv \theta^i[\varepsilon](x/\varepsilon)$ for all $x \in \partial (\varepsilon \omega)$. Then
\[
\frac{1}{2}\mu_\varepsilon+W_{\Omega_\varepsilon}[\mu_\varepsilon]=g^a_\varepsilon \, ,
\]
for all $\varepsilon \in ]-\varepsilon_0,\varepsilon_0[\setminus \{0\}$.
\end{rem}

We can recover the solution $u^a_\varepsilon$ (and in particular the rescaled function $t \mapsto u^a_\varepsilon(\varepsilon t)$) by summing the double layer potential with density $\mu_\varepsilon$ (see  Remark \ref{rem:thetaeps}) and a convenient multiple of the single layer potential with density $\tau_\varepsilon$ (see  Remark \ref{rem:rhoeps}). By arguing as in \cite[Prop.~4.5]{DaMuRo15}, we show in the following Proposition \ref{solution}   how to represent  the rescaled function $u^a_\varepsilon(\varepsilon t)$ by means of the functions $\rho^o[\varepsilon]$, $\rho^i[\varepsilon]$, $\theta^o[\varepsilon]$, and $\theta^i[\varepsilon]$ introduced in Propositions \ref{rhoeps} and \ref{thetaeps} (see also Lanza de Cristoforis \cite[\S5]{La08} and \cite[\S 2.4]{DaMu13}).

\begin{prop}\label{solution}
Let $\varepsilon\in]-\varepsilon_0,\varepsilon_0[\setminus\{0\}$. Then
\[
\begin{split}
u^a_\varepsilon(\varepsilon t)& \equiv w^+[\partial\Omega,\theta^o[\varepsilon]](\varepsilon t)-w^-[\partial\omega,\theta^i[\varepsilon]](t)\\
&
+\int_{\partial\omega}U^a_\#[\varepsilon]\rho^i[\varepsilon]\,d\sigma \biggl(v^+[\partial\Omega,\rho^o[\varepsilon]](\varepsilon t) +v^-[\partial\omega,\rho^i[\varepsilon]](t)+\frac{\log |\varepsilon|}{2\pi}\biggr)\\
&
\times\biggl(\frac{1}{\int_{\partial\omega}d\sigma}\int_{\partial\omega}v[\partial\Omega,\rho^o[\varepsilon]](\varepsilon s)+v[\partial\omega,\rho^i[\varepsilon]](s)\,d\sigma_s+\frac{\log |\varepsilon|}{2\pi}\biggr)^{-1}
\end{split}
\] 
for all $t\in\overline{(\varepsilon^{-1}\Omega)}\setminus\omega$.
\end{prop}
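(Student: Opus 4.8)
The plan is to verify that the function $F_\varepsilon$ of $t$ written on the right-hand side has the property that $x\mapsto F_\varepsilon(x/\varepsilon)$ solves the Dirichlet problem \eqref{eq:direpsa}; since that problem has a unique solution in $C^{1,\alpha}(\overline{\Omega_\varepsilon})$, this will force $F_\varepsilon(t)=u^a_\varepsilon(\varepsilon t)$. Harmonicity of $F_\varepsilon$ in $\varepsilon^{-1}\Omega_\varepsilon$ and membership of $x\mapsto F_\varepsilon(x/\varepsilon)$ in $C^{1,\alpha}(\overline{\Omega_\varepsilon})$ are immediate from the mapping properties of single and double layer potentials recalled in Subsection \ref{prel1}, \emph{provided} the denominator of the fraction is a nonzero constant. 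Hence the substance of the argument is (a) to check that the denominator is a nonzero constant, and (b) to verify the two boundary conditions on $\partial\Omega$ and on $\varepsilon\partial\omega$.

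First I would record the elementary scaling identities $S(\varepsilon\xi-\varepsilon\zeta)=S(\xi-\zeta)+(2\pi)^{-1}\log|\varepsilon|$ and $\nabla S(\varepsilon\xi-\varepsilon\zeta)=\varepsilon^{-1}(\nabla S)(\xi-\zeta)$, together with the facts that on $\varepsilon\partial\omega$ the outer unit normal of $\Omega_\varepsilon$ equals $-\nu_{\varepsilon\omega}$ and $\nu_{\varepsilon\omega}(\varepsilon\zeta)=\mathrm{sgn}(\varepsilon)\,\nu_\omega(\zeta)$, and the normalization $\int_{\partial\omega}\rho^i[\varepsilon]\,d\sigma=1$ from Proposition \ref{rhoeps}. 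Writing $\tau_\varepsilon$ and $\mu_\varepsilon$ for the densities on $\partial\Omega_\varepsilon$ introduced in Remarks \ref{rem:rhoeps} and \ref{rem:thetaeps}, these identities let me recognize, for $\varepsilon t\in\Omega_\varepsilon$, the numerator of the fraction as $v[\partial\Omega_\varepsilon,\tau_\varepsilon](\varepsilon t)$, i.e.
\[
v^+[\partial\Omega,\rho^o[\varepsilon]](\varepsilon t)+v^-[\partial\omega,\rho^i[\varepsilon]](t)+\frac{\log|\varepsilon|}{2\pi}=v[\partial\Omega_\varepsilon,\tau_\varepsilon](\varepsilon t),
\]
the double layer part as $w[\partial\Omega_\varepsilon,\mu_\varepsilon](\varepsilon t)$, i.e.
\[
w^+[\partial\Omega,\theta^o[\varepsilon]](\varepsilon t)-w^-[\partial\omega,\theta^i[\varepsilon]](t)=w[\partial\Omega_\varepsilon,\mu_\varepsilon](\varepsilon t)
\]
(the minus sign being precisely the reversal of orientation of the normal on $\varepsilon\partial\omega$), and the denominator as the mean of $v[\partial\Omega_\varepsilon,\tau_\varepsilon]$ over $\varepsilon\partial\omega$.

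Next I would exploit the boundary integral equations in the two Remarks. The relation $\frac{1}{2}\tau_\varepsilon+W^*_{\Omega_\varepsilon}[\tau_\varepsilon]=0$ says that $v[\partial\Omega_\varepsilon,\tau_\varepsilon]$ has vanishing normal derivative, from the side of the complement of $\Omega_\varepsilon$, on the whole of $\partial\Omega_\varepsilon$. Restricted to the bounded hole $\varepsilon\omega$ it is then harmonic with zero Neumann datum, hence a constant $\gamma_\varepsilon$; in particular it is $\equiv\gamma_\varepsilon$ on $\varepsilon\partial\omega$, so the denominator equals $\gamma_\varepsilon$. Restricted to $\R^2\setminus\overline\Omega$ it is harmonic with zero Neumann datum on $\partial\Omega$; since integrating $M^o[\varepsilon,\rho^o[\varepsilon],\rho^i[\varepsilon]]=0$ over $\partial\Omega$ and using $\int_{\partial\omega}\rho^i[\varepsilon]\,d\sigma=1$ together with the Gauss identity $\int_{\partial\Omega}\nu_\Omega(x)\cdot\nabla S(x-\varepsilon s)\,d\sigma_x=1$ yields $\int_{\partial\Omega}\rho^o[\varepsilon]\,d\sigma=-1$, the total charge $\int_{\partial\Omega_\varepsilon}\tau_\varepsilon\,d\sigma$ vanishes and $v[\partial\Omega_\varepsilon,\tau_\varepsilon]$ decays at infinity, hence is identically $0$ in $\R^2\setminus\overline\Omega$; thus it is $0$ on $\partial\Omega$. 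Moreover $\gamma_\varepsilon\neq0$, for otherwise $v[\partial\Omega_\varepsilon,\tau_\varepsilon]$ would vanish on both components of $\partial\Omega_\varepsilon$, hence be identically $0$, forcing $\tau_\varepsilon=0$ against $\int_{\partial(\varepsilon\omega)}\tau_\varepsilon\,d\sigma=1$. So the fraction is well defined and its numerator $v[\partial\Omega_\varepsilon,\tau_\varepsilon](\varepsilon t)$ equals $0$ when $\varepsilon t\in\partial\Omega$ and $\gamma_\varepsilon$ when $\varepsilon t\in\varepsilon\partial\omega$. In parallel, $\frac{1}{2}\mu_\varepsilon+W_{\Omega_\varepsilon}[\mu_\varepsilon]=g^a_\varepsilon$ says that the interior trace of $w[\partial\Omega_\varepsilon,\mu_\varepsilon]$ on $\partial\Omega_\varepsilon$ is $g^a_\varepsilon$, i.e. $w[\partial\Omega_\varepsilon,\mu_\varepsilon]=0$ on $\partial\Omega$ and $w[\partial\Omega_\varepsilon,\mu_\varepsilon]=U^a_\#[\varepsilon](x/\varepsilon)-c^a_\varepsilon$ on $\varepsilon\partial\omega$, where $c^a_\varepsilon\equiv\int_{\partial\omega}U^a_\#[\varepsilon]\rho^i[\varepsilon]\,d\sigma$, which coincides with the $\tau_\varepsilon$-weighted average appearing in the definition of $g^a_\varepsilon$.

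Putting the pieces together, $x\mapsto F_\varepsilon(x/\varepsilon)$ equals $w[\partial\Omega_\varepsilon,\mu_\varepsilon](x)+(c^a_\varepsilon/\gamma_\varepsilon)\,v[\partial\Omega_\varepsilon,\tau_\varepsilon](x)$, the coefficient of the single layer potential being $\int_{\partial\omega}U^a_\#[\varepsilon]\rho^i[\varepsilon]\,d\sigma=c^a_\varepsilon$ divided by the denominator $\gamma_\varepsilon$. This function is harmonic in $\Omega_\varepsilon$ and of class $C^{1,\alpha}(\overline{\Omega_\varepsilon})$; it equals $0+0=0$ on $\partial\Omega$, and on $\varepsilon\partial\omega$ it equals $\big(U^a_\#[\varepsilon](x/\varepsilon)-c^a_\varepsilon\big)+(c^a_\varepsilon/\gamma_\varepsilon)\gamma_\varepsilon=U^a_\#[\varepsilon](x/\varepsilon)$; hence by uniqueness for \eqref{eq:direpsa} it coincides with $u^a_\varepsilon$, which is the claim. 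I expect the one genuinely delicate point to be the bookkeeping in the rescaling step: carrying the logarithmic shifts $(2\pi)^{-1}\log|\varepsilon|$, the factor $\varepsilon^{-1}$ hidden in $\nabla S(\varepsilon\,\cdot)$ and the orientation reversal of the inner normal consistently, so that the raw sum of potentials in the statement is correctly identified with $w[\partial\Omega_\varepsilon,\mu_\varepsilon]+(c^a_\varepsilon/\gamma_\varepsilon)\,v[\partial\Omega_\varepsilon,\tau_\varepsilon]$; everything else is soft.
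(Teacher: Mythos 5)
Your proposal is correct and takes essentially the same route as the paper, which does not write out a proof but appeals to \cite[Prop.~4.5]{DaMuRo15} and Lanza de Cristoforis \cite[\S5]{La08}: there, as in your argument, one recognizes the right-hand side as $w[\partial\Omega_\varepsilon,\mu_\varepsilon]+(c^a_\varepsilon/\gamma_\varepsilon)\,v[\partial\Omega_\varepsilon,\tau_\varepsilon]$ via the rescaling identities for $S$, uses the integral equations of Remarks \ref{rem:rhoeps} and \ref{rem:thetaeps} to see that $v[\partial\Omega_\varepsilon,\tau_\varepsilon]$ vanishes on $\partial\Omega$ and equals the nonzero constant $\gamma_\varepsilon$ (the denominator) on $\varepsilon\partial\omega$, and concludes by uniqueness for the Dirichlet problem \eqref{eq:direpsa}. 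Your bookkeeping of the logarithmic shift, the sign of the normal on $\varepsilon\partial\omega$, and the zero total charge of $\tau_\varepsilon$ is accurate, so there is no gap.
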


\subsection{Power series expansions of the auxiliary functions  $(\rho^o[\varepsilon],\rho^i[\varepsilon])$ and $(\theta^o[\varepsilon],\theta^i[\varepsilon])$ around $\varepsilon=0$}\label{sec3}

As in \cite{AbBoLeMu}, the plan is first to construct an expansion for $\nu_{\omega}(t)\cdot \nabla \Big(u^a_\varepsilon(\varepsilon t)\Big) u^b(\varepsilon t)$ and then to integrate such an expansion on $\partial \omega$. We note that $u^a_\varepsilon(\varepsilon t)$ is represented by means of the auxiliary {density} functions  $(\rho^o[\varepsilon],\rho^i[\varepsilon])$ and $(\theta^o[\varepsilon],\theta^i[\varepsilon])$. Thus the plan is to obtain an expansion for those densities and then to get the one for $u^a_\varepsilon(\varepsilon t)$ by exploiting the representation formula of Proposition \ref{solution}.

The following Proposition \ref{rhok} of \cite[Prop.~5.1]{DaMuRo15} provides a power series expansion around $0$ of $(\rho^o[\varepsilon],\rho^i[\varepsilon])$. Throughout the paper, if $j\in\{1,2\}$, then $(\partial_j F)(y)$ denotes the partial derivative with respect to $x_j$ of the function $F(x)\equiv F(x_1,x_2)$ evaluated at $y\equiv(y_1,y_2)\in\mathbb{R}^2$.

\begin{prop}\label{rhok} Let $(\rho^o[\varepsilon],\rho^i[\varepsilon])$ be as in Proposition \ref{rhoeps} for all $\varepsilon\in]-\varepsilon_0,\varepsilon_0[$. Then there exist $\varepsilon_\rho\in]0,\varepsilon_0[$ and a sequence $\{(\rho^o_k,\rho^i_k)\}_{k\in\mathbb{N}}$ in $C^{0,\alpha}(\partial\Omega)\times C^{0,\alpha}(\partial\omega)$ such that 
\[
\rho^o[\varepsilon]=\sum_{k=0}^{+\infty}\frac{\rho^o_k}{k!}\varepsilon^k\quad\text{ and }\quad\rho^i[\varepsilon]=\sum_{k=0}^{+\infty}\frac{\rho^i_k}{k!}\varepsilon^k\qquad \forall \varepsilon\in]-\varepsilon_\rho,\varepsilon_\rho[\,,
\]
where the two series converge normally in $C^{0,\alpha}(\partial\Omega)$ and in $C^{0,\alpha}(\partial\omega)$, respectively, for all $\varepsilon\in]-\varepsilon_\rho,\varepsilon_\rho[$. Moreover,  the pair of functions $(\rho^o_0,\rho^i_0)$ is the unique solution in $C^{0,\alpha}(\partial\Omega)
\times C^{0,\alpha}(\partial\omega)$ of the following system of integral equations
\begin{align}\nonumber
&\frac{1}{2}\rho^o_0(x)+W^*_{\Omega}[\rho^o_0](x)=-\nu_{\Omega}(x)\cdot\nabla S(x) & \forall x\in\partial\Omega\,,\\
\nonumber
&\frac{1}{2} \rho^i_0(t)-W^*_{\omega}[ \rho^i_0](t)=0 & \forall t\in\partial\omega\,,\\
\nonumber
&\int_{\partial\omega}\rho^i_0\, d\sigma=1\,, 
\end{align}
and for each $k\in\mathbb{N}\setminus\{0\}$ the pair $(\rho^o_k,\rho^i_k)$ is the unique solution in $C^{0,\alpha}(\partial\Omega)
\times C^{0,\alpha}(\partial\omega)$ of the following system of integral equations which involves $\{(\rho^o_j,\rho^i_j)\}_{j=0}^{k-1}$,
\begin{align}\nonumber
&\frac{1}{2}\rho^o_k(x)+W^*_{\Omega}[\rho^o_k](x)\\
\nonumber
&\quad =\sum_{j=0}^{k}\binom{k}{j}(-1)^{j+1}\sum_{h=0}^j\binom{j}{h}\nu_{\Omega}(x)\cdot(\nabla\partial_1^h\partial_2^{j-h} S)(x)\int_{\partial\omega}\rho^i_{k-j}(s)s_1^hs_2^{j-h}\, d\sigma_s& \forall x\in\partial\Omega\,,\\
\nonumber
&\frac{1}{2} \rho^i_k(t)-W^*_{\omega}[ \rho^i_k](t)\\
\nonumber
&\quad =k\sum_{j=0}^{k-1}\binom{k-1}{j}(-1)^{j+1}\sum_{h=0}^j\binom{j}{h}t_1^h t_2^{j-h}\nu_{\omega}(t)\cdot\int_{\partial\Omega}\rho^o_{k-1-j}(\nabla \partial_1^h\partial_2^{j-h} S)\, d\sigma&  \forall t\in\partial\omega\,,\\
\nonumber
&\int_{\partial\omega}\rho^i_k\, d\sigma=0\,.
\end{align}  
\end{prop}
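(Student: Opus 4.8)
The conceptual content of Proposition \ref{rhok} is essentially delivered by Proposition \ref{rhoeps}: since, by part (iii) of that proposition, the map $\varepsilon\mapsto(\rho^o[\varepsilon],\rho^i[\varepsilon])$ is real analytic from $]-\varepsilon_0,\varepsilon_0[$ into the Banach space $C^{0,\alpha}(\partial\Omega)\times C^{0,\alpha}(\partial\omega)$, it coincides near $\varepsilon=0$ with its Taylor series, which converges normally on some interval $]-\varepsilon_\rho,\varepsilon_\rho[$ with $\varepsilon_\rho\in]0,\varepsilon_0[$. Setting $(\rho^o_k,\rho^i_k)\equiv\frac{d^k}{d\varepsilon^k}(\rho^o[\varepsilon],\rho^i[\varepsilon])\big|_{\varepsilon=0}$ already gives the existence of the sequence and the claimed normal convergence. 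What remains is to identify these coefficients as the solutions of the recursive systems in the statement.

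The plan is to differentiate the identity $M[\varepsilon,\rho^o[\varepsilon],\rho^i[\varepsilon]]=0$ — which holds for all $\varepsilon\in]-\varepsilon_0,\varepsilon_0[$ by Proposition \ref{rhoeps}(ii) — exactly $k$ times with respect to $\varepsilon$ and evaluate at $\varepsilon=0$; equivalently, one substitutes the power series of $\rho^o[\varepsilon]$ and $\rho^i[\varepsilon]$ into each of the three components $M^o$, $M^i$, $M^c$, together with the Taylor expansion in $\varepsilon$ of the kernels appearing there. The latter follows from the elementary identity $\frac{d}{d\varepsilon}F(x-\varepsilon s)=-(s_1\partial_1+s_2\partial_2)F(x-\varepsilon s)$ and the binomial theorem: for $x\in\partial\Omega$ and $s\in\partial\omega$, and $|\varepsilon|$ small enough that $|x-\varepsilon s|$ stays bounded away from $0$,
\[
\nabla S(x-\varepsilon s)=\sum_{j=0}^{+\infty}\frac{(-\varepsilon)^j}{j!}\sum_{h=0}^{j}\binom{j}{h}s_1^h s_2^{j-h}\,(\nabla\partial_1^h\partial_2^{j-h}S)(x)\,,
\]
with an analogous expansion for $\nabla S(\varepsilon t-y)$ when $t\in\partial\omega$, $y\in\partial\Omega$. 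Multiplying this series against the series for the relevant density, applying the Leibniz rule, and interchanging summation and integration (legitimate by the normal convergence), one reads off the coefficient of $\varepsilon^k$ in each component of $M$. The extra factor $\varepsilon$ in the mixed term of $M^i$ accounts both for the index shift $j\le k-1$ in the recursion and for the prefactor $k$, since $\frac{d^k}{d\varepsilon^k}\big(\varepsilon\,H(\varepsilon)\big)\big|_{\varepsilon=0}=k\,H^{(k-1)}(0)$; the signs $(-1)^{j+1}$ arise from combining the sign in the above differentiation identity with the parity of the derivatives of $S$ (relevant in $M^i$, where the kernel is evaluated at $-y$ rather than $y$) and with the passage of the kernel term to the right-hand side.

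Collecting the coefficient of $\varepsilon^0$ and using that $M^c=0$ forces $\int_{\partial\omega}\rho^i_0\,d\sigma=1$ (which is what turns $M^o[0,\rho^o_0,\rho^i_0]=0$ into $\tfrac12\rho^o_0+W^*_\Omega[\rho^o_0]=-\nu_\Omega\cdot\nabla S$) while every $\varepsilon$-prefactored term vanishes, yields the $k=0$ system. For $k\ge1$, collecting the coefficient of $\varepsilon^k$ and reindexing the double sums ($j\mapsto k-j$ in $M^o$, $j\mapsto k-1-j$ in $M^i$) reproduces the recursions displayed in the statement, and $M^c\equiv0$, i.e.\ $\int_{\partial\omega}\rho^i[\varepsilon]\,d\sigma\equiv1$, gives $\int_{\partial\omega}\rho^i_k\,d\sigma=0$. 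Unique solvability of each of these systems in $C^{0,\alpha}(\partial\Omega)\times C^{0,\alpha}(\partial\omega)$ follows because their common left-hand side is precisely the partial differential $d_{(\rho^o,\rho^i)}M[0,\rho^o_0,\rho^i_0]$, which is an isomorphism — the mapping whose invertibility underlies the implicit function theorem step in the proof of Proposition \ref{rhoeps}; concretely $\tfrac12+W^*_\Omega$ is invertible on $C^{0,\alpha}(\partial\Omega)$ by Fredholm theory, while $\tfrac12-W^*_\omega$ has a one-dimensional cokernel on $C^{0,\alpha}(\partial\omega)$ that is exactly compensated by the scalar constraint $\int_{\partial\omega}\rho^i\,d\sigma=c$.

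Since the analytic and functional-analytic input is already packaged in Proposition \ref{rhoeps}, the only genuine difficulty is the combinatorial bookkeeping: correctly tracking the binomial coefficients and signs when the product of the density series and the kernel series is expanded and matched term by term to the precise form of the recursions above.
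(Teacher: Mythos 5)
Your proof is correct and follows the natural route: real analyticity of $\varepsilon\mapsto(\rho^o[\varepsilon],\rho^i[\varepsilon])$ from Proposition \ref{rhoeps}(iii) gives the normally convergent Taylor series, and substituting into $M[\varepsilon,\rho^o[\varepsilon],\rho^i[\varepsilon]]=0$, expanding the kernels $\nabla S(x-\varepsilon s)$ and $\nabla S(\varepsilon t-y)$ (the latter also using the parity of the derivatives of $S$), and matching coefficients of $\varepsilon^k$ reproduces exactly the recursive systems, whose unique solvability comes from the invertibility of the linearization underlying Proposition \ref{rhoeps}. Note that the present paper does not prove this proposition at all but quotes it verbatim from \cite[Prop.~5.1]{DaMuRo15}; your derivation is precisely the one carried out in that reference.
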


The coefficients in the power series expansion of $(\theta^o[\varepsilon],\theta^i[\varepsilon])$ are instead determine in the following Proposition \ref{thetak} (see  \cite[Prop.~3.2]{AbBoLeMu}).

\begin{prop}\label{thetak} Let $(\theta^o[\varepsilon],\theta^i[\varepsilon])$ be as in Proposition \ref{thetaeps} for all $\varepsilon\in]-\varepsilon_0,\varepsilon_0[$. Then there exist $\varepsilon_\theta\in]0,\varepsilon_0[$ and a sequence $\{(\theta^o_k,\theta^i_k)\}_{k\in\mathbb{N}}$ in $C^{1,\alpha}(\partial\Omega)\times C^{1,\alpha}(\partial\omega)_0$ such that 
\[
\theta^o[\varepsilon]=\sum_{k=0}^\infty\frac{\theta^o_k}{k!}\varepsilon^k\quad\text{and }\quad\theta^i[\varepsilon]=\sum_{k=0}^\infty\frac{\theta^i_k}{k!}\varepsilon^k\qquad\forall \varepsilon\in]-\varepsilon_\theta,\varepsilon_\theta[\,,
\]
\[
(\theta^o_0,\theta^i_0)=(0,0)\, , \qquad \theta^o_1=0\, ,
\]
and $\theta^i_1$ is the unique solution in $C^{1,\alpha}(\partial\omega)_0$ of
\[
\begin{split}
\frac{1}{2}&\theta^i_1(t)-W_{\omega}[\theta^i_1](t)\\&=\sum_{h=0}^1t_1^h t_2^{1-h} (\partial_1^h\partial_2^{1-h}u^a)(0)-\sum_{l=0}^1 \sum_{h=0}^l \int_{\partial\omega}s_1^hs_2^{l-h}(\partial_1^h \partial_2^{l-h}u^a)(0)\rho^i_{1-l}(s)\,d\sigma_s\quad \forall t\in\partial\omega\,{,}
\end{split}
\]
and for each $k\in\mathbb{N}\setminus\{0,1\}$ the pair $(\theta^o_k,\theta^i_k)$ is the unique solution in $C^{1,\alpha}(\partial\Omega)\times C^{1,\alpha}(\partial\omega)_0$ of the following  system of integral equations which involves $\{(\theta^o_j,\theta^i_j)\}_{j=0}^{k-1}$,
\begin{align}\nonumber
&\frac{1}{2}\theta^o_k(x)+W_{\Omega}[\theta^o_k](x)\\
\nonumber
&=k\sum_{j=0}^{k-2}\binom{k-1}{j}(-1)^{j+1}\sum_{h=0}^j\binom{j}{h}(\nabla \partial_1^h \partial_2^{j-h} S)(x)\cdot \int_{\partial\omega}\theta^i_{k-1-j}(s)\;\nu_{\omega}(s)s_1^h s_2^{j-h} d\sigma_s\\ 
\nonumber
&\qquad\qquad\qquad\qquad\qquad\qquad\qquad\qquad\qquad\qquad\qquad\qquad\qquad\qquad\qquad \forall x\in\partial\Omega\,,\\
\nonumber
&\frac{1}{2}\theta^i_k(t)-W_{\omega}[\theta^i_k](t)=\sum_{j=0}^{k-1}\binom{k}{j}(-1)^{j+1}\sum_{h=0}^j\binom{j}{h}t_1^h t_2^{j-h}\int_{\partial\Omega}\theta^o_{k-j}\nu_{\Omega}\cdot \nabla \partial_1^h \partial_2^{j-h} S \,d\sigma\\ \nonumber
&+\sum_{h=0}^k\binom{k}{h}t_1^h t_2^{k-h} (\partial_1^h\partial_2^{k-h}u^a)(0)-\sum_{l=0}^k \sum_{h=0}^l \binom{k}{l}\binom{l}{h}\int_{\partial\omega}s_1^hs_2^{l-h}(\partial_1^h \partial_2^{l-h}u^a)(0)\rho^i_{k-l}(s)\,d\sigma_s\\ \nonumber
&\qquad\qquad\qquad\qquad\qquad\qquad\qquad\qquad\qquad\qquad\qquad\qquad\qquad\qquad\qquad  \forall t\in\partial\omega\,.\nonumber
\end{align}
\end{prop}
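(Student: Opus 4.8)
The series expansions will follow at once from Proposition~\ref{thetaeps}: since $\varepsilon\mapsto(\theta^o[\varepsilon],\theta^i[\varepsilon])$ is real analytic on $]-\varepsilon_0,\varepsilon_0[$ with values in the Banach space $C^{1,\alpha}(\partial\Omega)\times C^{1,\alpha}(\partial\omega)_0$, there is an $\varepsilon_\theta\in]0,\varepsilon_0[$ for which this map coincides on $]-\varepsilon_\theta,\varepsilon_\theta[$ with its Taylor series at $0$, the series converging normally in the respective spaces. I would then simply \emph{define} $(\theta^o_k,\theta^i_k)$ to be $k!$ times the $k$-th Taylor coefficient, so that the displayed series hold by construction; what remains is to \emph{identify} these coefficients through the stated integral equations.

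For the identification, the plan is to differentiate the defining identity $\Lambda[\varepsilon,\theta^o[\varepsilon],\theta^i[\varepsilon]]=0$ (valid for $\varepsilon\in]-\varepsilon_\theta,\varepsilon_\theta[$) $k$ times in $\varepsilon$ at $\varepsilon=0$ — equivalently, to substitute the power series into $\Lambda$ and collect the coefficient of $\varepsilon^k$. This requires, besides the series for $(\theta^o[\varepsilon],\theta^i[\varepsilon])$, the expansion of $(\rho^o[\varepsilon],\rho^i[\varepsilon])$ from Proposition~\ref{rhok}, the expansion of $U^a_\#[\varepsilon]$ coming from the Taylor series of $u^a$ at $0$ (cf.\ \eqref{eq:Usharp}), i.e.\ $U^a_\#[\varepsilon](t)=\sum_{l\ge0}\frac{\varepsilon^l}{l!}\sum_{h=0}^l\binom lh(\partial_1^h\partial_2^{l-h}u^a)(0)\,t_1^h t_2^{l-h}$, and the elementary Taylor expansions of the kernels, e.g.\ $\nabla S(x-\varepsilon s)=\sum_{j\ge0}\frac{(-\varepsilon)^j}{j!}\sum_{h=0}^j\binom jh s_1^h s_2^{j-h}(\nabla\partial_1^h\partial_2^{j-h}S)(x)$ and the analogous one for $\nabla S(\varepsilon t-y)$; these converge normally and uniformly in $x,y\in\partial\Omega$, $s,t\in\partial\omega$ and $\varepsilon$ small, since $x-\varepsilon s$ and $\varepsilon t-y$ then stay in a compact subset of $\mathbb{R}^2\setminus\{0\}$.

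Carrying out the Cauchy products, the coefficient of $\varepsilon^k$ in $\Lambda^o[\varepsilon,\theta^o[\varepsilon],\theta^i[\varepsilon]]$, times $k!$, should be $(\tfrac12 I+W_\Omega)[\theta^o_k]$ plus terms involving only $\theta^i_0,\dots,\theta^i_{k-1}$, while that of $\Lambda^i$, times $k!$, should be $(\tfrac12 I-W_\omega)[\theta^i_k]$ plus terms involving only $\theta^o_0,\dots,\theta^o_k$, $\theta^i_0,\dots,\theta^i_{k-1}$ and the known data. Equating these to zero will give exactly the stated systems once one (i) uses the parity relation $(\nabla\partial_1^h\partial_2^{j-h}S)(-y)=(-1)^{j+1}(\nabla\partial_1^h\partial_2^{j-h}S)(y)$ to recover the signs $(-1)^{j+1}$; (ii) identifies the factors $\binom kj$ and $k\binom{k-1}j$ with the ratios $k!/(j!(k-j)!)$ and $k!/(j!(k-1-j)!)$ arising in those products; and (iii) observes that the terms which would carry $\theta^o_0$ or $\theta^o_1$ drop out, those coefficients being zero. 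Since $\tfrac12 I+W_\Omega$ is a homeomorphism of $C^{1,\alpha}(\partial\Omega)$ and $\tfrac12 I-W_\omega$ of $C^{1,\alpha}(\partial\omega)_0$ onto the appropriate range — the solvability (Fredholm alternative and classical potential theory, interior problem for $\Omega$ and exterior problem for $\omega$) that underlies the invertibility of the differential of $\Lambda$ in Proposition~\ref{thetaeps} — each system determines $(\theta^o_k,\theta^i_k)$ uniquely and recursively: first $\theta^o_k$, then $\theta^i_k$. The low orders are read off directly: for $k=0$ both right-hand sides are empty sums and the constant terms of $\Lambda^i$ cancel because $\int_{\partial\omega}\rho^i_0\,d\sigma=1$ and $U^a_\#[0]\equiv u^a(0)$, so $(\theta^o_0,\theta^i_0)=(0,0)$; for $k=1$ the $\Lambda^o$-equation has empty right-hand side (its only term carries $\theta^i_0=0$), so $\theta^o_1=0$, and the $\Lambda^i$-equation collapses to the displayed equation for $\theta^i_1$ after using $\int_{\partial\omega}\rho^i_1\,d\sigma=0$ and $\theta^o_1=0$.

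The main obstacle will not be conceptual — once the identity is written in this form the recursion is forced — but the bookkeeping: matching, term by term, the combinatorial factors produced by the iterated Leibniz rule and the Cauchy products, and the signs produced by the parities of the derivatives of $S$, so that the output coincides exactly with the systems in the statement. I would also need to make the termwise identification of $\varepsilon$-coefficients rigorous, which follows from the joint real analyticity of $\Lambda$ and the normal convergence of all the series involved, so that the composite is represented by an absolutely convergent power series whose coefficients can be computed by formal manipulation.
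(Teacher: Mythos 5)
Your proposal is correct and follows essentially the same route as the paper, which omits the proof and refers to \cite[Prop.~3.2]{AbBoLeMu} and the method of \cite{DaMuRo15}: the real analyticity from Proposition \ref{thetaeps} gives the convergent Taylor expansion, and the recursive systems are obtained by inserting the expansions of the densities, of $U^a_\#[\varepsilon]$, of $\rho^i[\varepsilon]$, and of the kernels into $\Lambda[\varepsilon,\theta^o[\varepsilon],\theta^i[\varepsilon]]=0$ and equating coefficients of $\varepsilon^k$, with unique solvability of each system coming from the same Fredholm/potential-theoretic facts underlying Proposition \ref{thetaeps}. One small bookkeeping slip in your item (iii): the truncation of the $j$-sum to $j\le k-2$ in the $\theta^o_k$-equation is due to $\theta^i_0=0$ (not $\theta^o_0$ or $\theta^o_1$), while the truncation to $j\le k-1$ in the $\theta^i_k$-equation is due to $\theta^o_0=0$.
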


\subsection{Series expansion of $\nu_{\omega}(\cdot)\cdot \nabla \big(u^a_\varepsilon(\varepsilon \cdot)\big) u^b(\varepsilon \cdot)$ around $\varepsilon=0$}\label{sec4}

{
In order to compute an asymptotic expansion of
\[
-\int_{\partial \omega}\nu_{\omega}(t)\cdot \nabla \Big(u^a_\varepsilon(\varepsilon t)\Big) u^b(\varepsilon t) \, d\sigma_t
\]
as $\varepsilon \to 0$, we now turn to construct a series expansion for $\nu_{\omega}(\cdot)\cdot \nabla \big(u^a_\varepsilon(\varepsilon \cdot)\big) u^b(\varepsilon \cdot)$ for $\varepsilon$ in a neighborhood of $0$. The coefficients of such expansion will be defined by means of the derivatives of the functions $u^a$ and $u^b$ and of the sequences $\{(\rho^o_k,\rho^i_k)\}_{k\in\mathbb{N}}$ and $\{(\theta^o_k,\theta^i_k)\}_{k\in\mathbb{N}}$  introduced in Section \ref{sec3}. 
As in \cite[Prop.~6.1]{DaMuRo15}, in the following Proposition \ref{uk}, we prove a representation formula which can be easily obtained by   Propositions \ref{solution}, \ref{rhok} and \ref{thetak}, and by   standard properties of real analytic maps (see also  Lanza de Cristoforis \cite[Theorem 5.3]{La08} and \cite[Theorem 3.1]{DaMu13}).}

\begin{prop}\label{uk} Let $\{(\rho^o_k,\rho^i_k)\}_{k\in\mathbb{N}}$ and $\{(\theta^o_k,\theta^i_k)\}_{k\in\mathbb{N}}$ be as in Propositions \ref{rhok} and \ref{thetak}, respectively.  Let 
\[
\begin{split}
&u^a_{\mathrm{m},0}(t)\equiv0\qquad\qquad\qquad\qquad\qquad\qquad\qquad\qquad  \forall t\in\mathbb{R}^2\setminus\omega\,, \\
& u^a_{\mathrm{m},1}(t)\equiv- w^-[\partial\omega,\theta^i_1](t) \qquad\qquad\qquad\qquad\qquad  \forall t\in\mathbb{R}^2\setminus\omega\,, \\
&u^a_{\mathrm{m},k}(t)\equiv\frac{1}{k!}\sum_{j=0}^{k-1}\binom{k}{j}(-1)^{j}\sum_{h=0}^j\binom{j}{h} t_1^h t_2^{j-h} \int_{\partial\Omega}\theta^o_{k-j}\,\nu_{\Omega}\cdot(\nabla\partial_1^h \partial_2^{j-h} S)\,d\sigma\\
&\qquad\qquad - \frac{1}{k!}w^-[\partial\omega,\theta^i_k](t) \quad\qquad\qquad\qquad\qquad  \forall t\in\mathbb{R}^2\setminus\omega\,, \quad \forall k \geq 2 \\
\end{split}
\]
and
 \[
\begin{split}
&v_{\mathrm{m},k}(t)\equiv\frac{1}{k!}\sum_{j=0}^k \binom{k}{j}(-1)^j\sum_{h=0}^j\binom{j}{h} t_1^h t_2^{j-h}\int_{\partial\Omega}\rho^o_{k-j}\partial_1^h \partial_2^{j-h} S\,d\sigma
 +\frac{1}{k!}v^-[\partial\omega,\rho^i_k](t)\\
&\qquad\qquad\qquad\qquad\qquad\qquad\qquad\qquad\qquad\qquad\qquad\qquad\qquad\qquad \forall t\in\mathbb{R}^2\setminus\omega\,, \\
&g^a_k\equiv\frac{1}{k!}\sum_{l=0}^k \sum_{h=0}^l \binom{k}{l}\binom{l}{h}\int_{\partial\omega}s_1^hs_2^{l-h}(\partial_1^h \partial_2^{l-h}u^a)(0)\rho^i_{k-l}(s)\,d\sigma_s\,,\\
&r_k\equiv\frac{1}{k!\int_{\partial\omega}d\sigma}\sum_{j=0}^k\binom{k}{j}(-1)^j\sum_{h=0}^j\binom{j}{h} \int_{\partial\omega}s_1^h s_2^{j-h}\,d\sigma_s\int_{\partial\Omega}\rho^o_{k-j}\partial_1^h \partial_2^{j-h}S\,d\sigma\\
&\qquad+\frac{1}{k!\int_{\partial\omega}d\sigma}\int_{\partial\omega}v[\partial\omega,\rho^i_k]\,d\sigma\,,
\end{split}
\] for all $k\in\mathbb{N}$. Then  the following statements hold.
\begin{enumerate}
\item[(i)] There exists $\varepsilon^*\in]0,\varepsilon_0]$ such that the series $\sum_{k=0}^\infty g^a_{k}\varepsilon^k$ and $\sum_{k=0}^\infty r_k\varepsilon^k$ converge absolutely for all $\varepsilon \in ]-\varepsilon^*,\varepsilon^*[$. Moreover,
\[
g^a_0=u^a(0)\, .
\]
\item[(ii)] If $\Omega_\mathrm{m}\subseteq\mathbb{R}^2\setminus\overline{\omega}$ is open and bounded,   then there exists $\varepsilon_\mathrm{m}\in]0,\varepsilon^*]\cap]0,1[$  such that $\varepsilon\overline{\Omega}_\mathrm{m}\subseteq\Omega$  for all $\varepsilon\in]-\varepsilon_\mathrm{m},\varepsilon_\mathrm{m}[$ and such that
\begin{equation}\label{uepsm}
u^a_{\varepsilon}(\varepsilon\cdot)_{|\overline{\Omega}_\mathrm{m}}=\sum_{k=1}^\infty u^a_{\mathrm{m},k|\overline{\Omega}_\mathrm{m}}\varepsilon^k+(\sum_{k=0}^\infty g^a_{k}\varepsilon^k)\frac{\sum_{k=0}^\infty v_{\mathrm{m},k|\overline{\Omega}_\mathrm{m}}\varepsilon^k+(2\pi)^{-1}\log|\varepsilon|}{\sum_{k=0}^\infty r_k\varepsilon^k+(2\pi)^{-1}\log|\varepsilon|}
\end{equation} for all $\varepsilon\in]-\varepsilon_\mathrm{m},\varepsilon_\mathrm{m}[\setminus\{0\}$. Moreover, the series $\sum_{k=1}^\infty u^a_{\mathrm{m},k|\overline{\Omega}_\mathrm{m}}\varepsilon^k$ and $\sum_{k=0}^\infty v_{\mathrm{m},k|\overline{\Omega}_\mathrm{m}}\varepsilon^k$ converge  normally in $C^{1,\alpha}(\overline{\Omega}_\mathrm{m})$  for all $\varepsilon\in]-\varepsilon_\mathrm{m},\varepsilon_\mathrm{m}[$.
\end{enumerate}
\end{prop}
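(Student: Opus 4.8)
The plan is to combine the representation formula of Proposition~\ref{solution} with the power series expansions of the auxiliary densities $(\rho^o[\varepsilon],\rho^i[\varepsilon])$ and $(\theta^o[\varepsilon],\theta^i[\varepsilon])$ obtained in Propositions~\ref{rhok} and~\ref{thetak}, and then to reorganise everything into the claimed form~\eqref{uepsm}. Concretely, I would start from the expression for $u^a_\varepsilon(\varepsilon t)$ in Proposition~\ref{solution}, which is the sum of a ``double layer part'' $w^+[\partial\Omega,\theta^o[\varepsilon]](\varepsilon t)-w^-[\partial\omega,\theta^i[\varepsilon]](t)$ and a ``single layer part'' multiplied by the scalar factor $\int_{\partial\omega}U^a_\#[\varepsilon]\rho^i[\varepsilon]\,d\sigma$ divided by $\bigl(\tfrac{1}{|\partial\omega|}\int_{\partial\omega}(v[\partial\Omega,\rho^o[\varepsilon]](\varepsilon\,\cdot)+v[\partial\omega,\rho^i[\varepsilon]])\,d\sigma+\tfrac{\log|\varepsilon|}{2\pi}\bigr)$. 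The first observation is that, by the Taylor expansion of the kernel $S(x-\varepsilon s)$ in the variable $\varepsilon$ together with the analyticity of $U^a_\#[\varepsilon]$ from~\eqref{eq:Usharp}, each of these building blocks, restricted to the fixed compact set $\overline\Omega_\mathrm m\subseteq\mathbb R^2\setminus\overline\omega$, is an analytic function of $\varepsilon$ valued in $C^{1,\alpha}(\overline\Omega_\mathrm m)$; the coefficients of the resulting series are exactly $u^a_{\mathrm m,k}$, $v_{\mathrm m,k}$, $g^a_k$, and $r_k$ as defined in the statement.

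The key computational step is to verify that these coefficients really match the given formulas. For $u^a_{\mathrm m,k}$ one expands $w^+[\partial\Omega,\theta^o[\varepsilon]](\varepsilon t)=\sum_k\frac{\varepsilon^k}{k!}(\text{terms from }\theta^o_k\text{ and from the Taylor coefficients of }S)$ using the integral representation $w^+[\partial\Omega,\theta^o](x)=-\int_{\partial\Omega}\theta^o(y)\nu_\Omega(y)\cdot\nabla S(x-y)\,d\sigma_y$ and the multinomial expansion of $\nabla S(\varepsilon t-y)$ about $\varepsilon=0$, combined with the expansion $\theta^o[\varepsilon]=\sum_k\frac{\theta^o_k}{k!}\varepsilon^k$; the term $-w^-[\partial\omega,\theta^i[\varepsilon]](t)$ contributes $-\frac1{k!}w^-[\partial\omega,\theta^i_k](t)$ directly. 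Using $\theta^o_0=\theta^o_1=0$ and $\theta^i_0=0$ from Proposition~\ref{thetak} gives $u^a_{\mathrm m,0}=0$ and the stated value of $u^a_{\mathrm m,1}$. Similarly, $v_{\mathrm m,k}$ comes from expanding $v[\partial\Omega,\rho^o[\varepsilon]](\varepsilon t)+v[\partial\omega,\rho^i[\varepsilon]](t)$, $g^a_k$ from expanding the product $U^a_\#[\varepsilon](s)\rho^i[\varepsilon](s)$ and integrating over $\partial\omega$ (here one uses $u^a(\varepsilon s)=\sum_{l,h}\frac{\varepsilon^l}{h!(l-h)!}s_1^hs_2^{l-h}(\partial_1^h\partial_2^{l-h}u^a)(0)$ and the series for $\rho^i[\varepsilon]$), and $r_k$ from expanding the averaging integral in the denominator. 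That $g^a_0=u^a(0)$ follows from $\int_{\partial\omega}\rho^i_0\,d\sigma=1$. Absolute/normal convergence of all four series on a common interval $]-\varepsilon^*,\varepsilon^*[$, respectively $]-\varepsilon_\mathrm m,\varepsilon_\mathrm m[$, is inherited from the normal convergence of the $\rho$- and $\theta$-series on $]-\varepsilon_\rho,\varepsilon_\rho[$ and $]-\varepsilon_\theta,\varepsilon_\theta[$ stated in Propositions~\ref{rhok} and~\ref{thetak}, together with boundedness of the potential operators and of the Taylor coefficients of $S$ on the relevant compact sets; one simply takes $\varepsilon^*$ and $\varepsilon_\mathrm m$ small enough that all manipulations are justified and $\varepsilon\overline\Omega_\mathrm m\subseteq\Omega$.

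The main obstacle — and the reason the $\log|\varepsilon|$ terms appear — is that the scalar factor in Proposition~\ref{solution} is a ratio, and the denominator $\sum_k r_k\varepsilon^k+(2\pi)^{-1}\log|\varepsilon|$ is not analytic in $\varepsilon$ near $0$ because of the logarithmic term. One cannot simply invert it as a power series. The point is that the numerator of the ratio has exactly the same logarithmic singularity $(2\pi)^{-1}\log|\varepsilon|$, so the quotient $\frac{\sum_k v_{\mathrm m,k}\varepsilon^k+(2\pi)^{-1}\log|\varepsilon|}{\sum_k r_k\varepsilon^k+(2\pi)^{-1}\log|\varepsilon|}$ has to be kept as is rather than expanded; the careful bookkeeping is to track which pieces of $u^a_\varepsilon(\varepsilon t)$ combine into the ``analytic'' part $\sum_{k\ge1}u^a_{\mathrm m,k}\varepsilon^k$ and which combine into the factor multiplying the logarithmic quotient. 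This is precisely the structure already established in \cite[Prop.~6.1]{DaMuRo15} and \cite[Theorem~5.3]{La08} for the Dirichlet problem; since the boundary datum here, $U^a_\#[\varepsilon]$, is analytic in $\varepsilon$ by~\eqref{eq:Usharp}, the argument transfers with essentially no change, and the only real work is to write out the coefficients explicitly. I would therefore present the proof as an application of those cited results, spelling out the identification of $u^a_{\mathrm m,k}$, $v_{\mathrm m,k}$, $g^a_k$, $r_k$ and the special values at $k=0,1$, and leaving the convergence and the ratio structure to the references.
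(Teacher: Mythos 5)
Your proposal is correct and follows essentially the same route as the paper, which derives Proposition~\ref{uk} by inserting the power series of $(\rho^o[\varepsilon],\rho^i[\varepsilon])$ and $(\theta^o[\varepsilon],\theta^i[\varepsilon])$ from Propositions~\ref{rhok} and~\ref{thetak} into the representation formula of Proposition~\ref{solution}, identifying the coefficients $u^a_{\mathrm m,k}$, $v_{\mathrm m,k}$, $g^a_k$, $r_k$, and deferring the convergence and the handling of the logarithmic ratio to \cite[Prop.~6.1]{DaMuRo15} and \cite[Theorem~5.3]{La08}. Your identification of the coefficients (including $g^a_0=u^a(0)$ via $\int_{\partial\omega}\rho^i_0\,d\sigma=1$) and your remark that the quotient with the $(2\pi)^{-1}\log|\varepsilon|$ terms must be kept unexpanded match the paper's intended argument exactly.
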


We are now ready to obtain an expansion for the map in \eqref{eq:fun} by exploiting Proposition \ref{uk}.

\begin{prop}\label{funk} 
With the notation introduced in Proposition \ref{uk}, let 
\[
\begin{split}
&u^l_{\#,k}(t)\equiv\sum_{\substack{(h,j)\in \mathbb{N}^2\\ h+j=k}}\frac{\partial_1^h \partial_2^{j}u^l(0)}{h! j!}t^h_1 t_2^j\qquad\qquad\quad  \forall t\in  \mathbb{R}^2\,, \quad l=a,b\, , \\
& \tilde{u}_{k}(t)\equiv\sum_{l=0}^k \nu_{\omega}(t)\cdot\nabla u^a_{\mathrm{m},l |\partial \omega}(t) u^b_{\#,k-l}(t)
\qquad  \forall t\in\partial\omega\,, \\
& \tilde{v}_k(t)\equiv \nu_{\omega}(t)\cdot\nabla v_{\mathrm{m},k|\partial \omega}(t) \quad \ \qquad \qquad \qquad  \forall t\in\partial\omega\,, \\
&\tilde{g}_{k}(t)\equiv\sum_{l=0}^k g^a_l u^b_{\#,k-l}(t) \quad\qquad\qquad\qquad\qquad  \forall t\in\partial \omega\,, \end{split}
\]
for all $k\in\mathbb{N}$. Then  there exists $\tilde{\varepsilon}\in]0,\varepsilon^{*}]\cap]0,1[$  such that
\begin{equation}\label{funepsm}
\begin{split}
\nu_{\omega}(t) \cdot \nabla \big(u^a_{\varepsilon}(\varepsilon t)\big) u^b(\varepsilon t) =\sum_{k=1}^\infty \tilde{u}_{k}(t)\varepsilon^k+\Bigg(\sum_{k=0}^\infty \tilde{g}_{k}(t)\varepsilon^k\Bigg)\frac{\sum_{k=0}^\infty \tilde{v}_{k}(t)\varepsilon^k}{\sum_{k=0}^\infty r_k\varepsilon^k+(2\pi)^{-1}\log|\varepsilon|}
\quad \forall t \in \partial \omega\, , \end{split}
\end{equation} for all $\varepsilon\in]-\tilde{\varepsilon},\tilde{\varepsilon}[\setminus\{0\}$. Moreover, the series $\sum_{k=0}^\infty \tilde{g}_{k}\varepsilon^k$, $\sum_{k=0}^\infty \tilde{u}_{k}\varepsilon^k$, and $\sum_{k=0}^\infty \tilde{v}_{k}\varepsilon^k$ converge  normally in $C^{0,\alpha}(\partial \omega)$  for all $\varepsilon\in]-\tilde{\varepsilon},\tilde{\varepsilon}[$.
\end{prop}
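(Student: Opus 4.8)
The plan is to derive \eqref{funepsm} from the representation formula \eqref{uepsm} of Proposition \ref{uk} by applying a chain of elementary operations --- term-by-term differentiation, restriction to $\partial\omega$, pairing with the normal, and multiplication by $u^b(\varepsilon\,\cdot)$ --- each of which is a bounded linear (or bilinear) map between Schauder spaces and therefore transforms a normally convergent power series into a normally convergent power series.

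First I would fix the auxiliary set $\Omega_\mathrm{m}$ appearing in Proposition \ref{uk}(ii). Since $\omega$ is of class $C^{1,\alpha}$ with $\mathbb{R}^2\setminus\overline\omega$ connected, for $\delta>0$ small the ``outer shell'' $\Omega_\mathrm{m}\equiv\{x\in\mathbb{R}^2 : 0<\mathrm{dist}(x,\overline\omega)<\delta\}$ is an open, bounded subset of $\mathbb{R}^2\setminus\overline\omega$ whose closure contains $\partial\omega$. With this choice, Proposition \ref{uk}(ii) gives $\varepsilon_\mathrm{m}>0$ and the expansion \eqref{uepsm}, both series converging normally in $C^{1,\alpha}(\overline\Omega_\mathrm{m})$; moreover the coefficients $u^a_{\mathrm{m},k}$ and $v_{\mathrm{m},k}$, being finite combinations of polynomials and of the layer potentials $w^-[\partial\omega,\cdot]$, $v^-[\partial\omega,\cdot]\in C^{1,\alpha}_{\mathrm{loc}}(\mathbb{R}^2\setminus\omega)$, are of class $C^{1,\alpha}$ up to $\partial\omega$, so their normal derivatives on $\partial\omega$ are well defined and equal $\tilde v_k$ and (after the Cauchy product below) enter $\tilde u_k$.

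Next I would apply $\nabla$ to \eqref{uepsm}. As $\nabla\colon C^{1,\alpha}(\overline\Omega_\mathrm{m})\to C^{0,\alpha}(\overline\Omega_\mathrm{m})^2$ is bounded and linear, it acts term by term and annihilates the $x$-independent summand $(2\pi)^{-1}\log|\varepsilon|$ in the numerator; restricting the result to $\partial\omega$ and taking the scalar product with $\nu_\omega$ (multiplication by the fixed element $\nu_\omega\in C^{0,\alpha}(\partial\omega)^2$ in the Banach algebra $C^{0,\alpha}(\partial\omega)$) yields
\[
\nu_\omega\cdot\nabla\bigl(u^a_\varepsilon(\varepsilon\,\cdot)\bigr)_{|\partial\omega}=\sum_{k=1}^\infty\varepsilon^k\,\nu_\omega\cdot\nabla u^a_{\mathrm{m},k|\partial\omega}+\Bigl(\sum_{k=0}^\infty g^a_k\varepsilon^k\Bigr)\frac{\sum_{k=0}^\infty\tilde v_k\varepsilon^k}{\sum_{k=0}^\infty r_k\varepsilon^k+(2\pi)^{-1}\log|\varepsilon|},
\]
normally convergent in $C^{0,\alpha}(\partial\omega)$ for $|\varepsilon|<1$ small, the denominator being bounded away from $0$ exactly as in Proposition \ref{uk}. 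Since $u^b$ is analytic near $0$, the map $\varepsilon\mapsto u^b(\varepsilon\,\cdot)_{|\partial\omega}$ is real-analytic into $C^{1,\alpha}(\partial\omega)$ by the composition-operator results used for \eqref{eq:Usharp}, and its Taylor coefficients are forced to be the homogeneous parts $u^b_{\#,k}$, so $u^b(\varepsilon t)=\sum_{k\ge0}\varepsilon^k u^b_{\#,k}(t)$ with normal convergence in $C^{0,\alpha}(\partial\omega)$. Multiplying the two series and using that products of normally convergent power series in $C^{0,\alpha}(\partial\omega)$ converge normally with Cauchy-product coefficients, the polynomial-times-potential part collapses to $\sum_{k\ge1}\tilde u_k\varepsilon^k$ (the $l=0$ term dropping out because $u^a_{\mathrm{m},0}=0$), the factor $\bigl(\sum g^a_k\varepsilon^k\bigr)\bigl(\sum u^b_{\#,k}\varepsilon^k\bigr)$ becomes $\sum_{k\ge0}\tilde g_k\varepsilon^k$, and the scalar denominator passes through untouched; this is precisely \eqref{funepsm}. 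Choosing $\tilde\varepsilon\in]0,\varepsilon^*]\cap]0,1[$ smaller than $\varepsilon_\mathrm{m}$, than the radius of convergence of $\varepsilon\mapsto u^b(\varepsilon\,\cdot)_{|\partial\omega}$, and small enough that the denominator never vanishes fixes the range of validity.

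I expect the only genuine care to be required in two routine spots: the selection of $\Omega_\mathrm{m}$ so that the $C^{1,\alpha}(\overline\Omega_\mathrm{m})$-convergent expansion of Proposition \ref{uk} may legitimately be restricted to $\partial\omega$ (i.e.\ $\partial\omega\subseteq\overline\Omega_\mathrm{m}$ while still $\Omega_\mathrm{m}\subseteq\mathbb{R}^2\setminus\overline\omega$), and the index bookkeeping in the Cauchy products needed to match the exact formulas for $\tilde u_k$, $\tilde v_k$, $\tilde g_k$; everything else is an application of the stability of normal convergence under bounded linear and bilinear maps.
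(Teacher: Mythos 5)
Your proposal is correct and follows essentially the same route as the paper's proof: expand $u^b(\varepsilon\,\cdot)_{|\partial\omega}$ as a normally convergent power series in $C^{0,\alpha}(\partial\omega)$, form Cauchy products with the $\nu_\omega\cdot\nabla$-trace of the expansion from Proposition~\ref{uk}(ii), and identify the resulting coefficients with $\tilde u_k$, $\tilde g_k$. The paper's proof is terser — it simply records the Taylor expansion of $u^l(\varepsilon t)$, states the two Cauchy products, and invokes \eqref{uepsm} — whereas you make explicit the intermediate steps (choice of $\Omega_\mathrm{m}$, boundedness of $\nabla:C^{1,\alpha}(\overline\Omega_\mathrm{m})\to C^{0,\alpha}(\overline\Omega_\mathrm{m})^2$, restriction to $\partial\omega$, and the disappearance of the $x$-independent $(2\pi)^{-1}\log|\varepsilon|$ under $\nabla$); these are precisely the implicit ingredients the paper relies on, so the two arguments coincide in substance.
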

\proof
Taking  $\tilde{\varepsilon}\in ]0,\varepsilon^{\ast}[$ small enough,  we have that 
\[
\begin{split}
u^l(\varepsilon t)&=\sum_{(i,j)\in \mathbb{N}^2}\varepsilon^{i+j}\frac{\partial_1^i \partial_2^{j}u^l(0)}{i! j!}t^i_1 t_2^j\\
&=\sum_{h=0}^\infty \Bigg(\sum_{\substack{(i,j)\in \mathbb{N}^2\\ i+j=h}}\frac{\partial_1^i \partial_2^{j}u^l(0)}{i! j!}t^i_1 t_2^j\Bigg) \varepsilon^{h}=\sum_{h=0}^\infty u^l_{\#,h}(t) \varepsilon^{h} \qquad \forall t \in \partial \omega\, , \quad l=a,b\, ,
\end{split}
\]
for $\varepsilon \in ]-\tilde{\varepsilon},\tilde{\varepsilon}[$, and that the power series $\sum_{h=0}^\infty u^l_{\#,h|\partial \omega} \varepsilon^{h} $ converges normally in $C^{0,\alpha}(\partial \omega)$  for all $\varepsilon \in ]-\tilde{\varepsilon},\tilde{\varepsilon}[$, for $l=a,b$.
We observe that,  possibly taking a smaller $\tilde{\varepsilon}$, 
\[
\Big(\sum_{k=1}^\infty \nu_{\omega} \cdot \nabla u^a_{\mathrm{m},k|\partial \omega}\varepsilon^k\Big)\Big(\sum_{h=0}^\infty u^b_{\#,h|\partial \omega} \varepsilon^{h}\Big)=\sum_{k=0}^\infty \tilde{u}_{k}\varepsilon^k\, ,  \qquad \Big(\sum_{k=0}^\infty g^a_{k}\varepsilon^k\Big)\Big(\sum_{h=0}^\infty u^b_{\#,h|\partial \omega} \varepsilon^{h}\Big)=\sum_{k=0}^\infty \tilde{g}_{k}\varepsilon^k
\]
where the series converge normally in $C^{0,\alpha}(\partial \omega)$  for all $\varepsilon \in ]-\tilde{\varepsilon},\tilde{\varepsilon}[$ and we have set
\[
\tilde{u}_{k}\equiv\sum_{l=0}^k \nu_{\omega} \cdot \nabla u^a_{\mathrm{m},l |\partial \omega} u^b_{\#,k-l|\partial \omega}\, , \qquad \tilde{g}_{k}\equiv\sum_{l=0}^k g^a_l u^b_{\#,k-l|\partial \omega}\, . 
\]
Then by Proposition \ref{uk} (see formula \eqref{uepsm}), we deduce the validity of \eqref{funepsm}. \qed

The next step is to represent $\nu_{\omega}(\cdot) \cdot \nabla \big(u^a_{\varepsilon}(\varepsilon\cdot)\big)_{|\partial \omega}u^b(\varepsilon \cdot)_{|\partial \omega}$ as a convergent series of the type
\[
\sum_{n=0}^\infty  \varphi_\varepsilon(\cdot) \varepsilon^n \, .
\]
By exploiting exactly the same argument of \cite[Thm.~6.3]{DaMuRo15}, we can prove  Theorem  \ref{umk} below where we obtain  a series expansion for the map
\[
\varepsilon \mapsto \nu_{\omega}(\cdot) \cdot \nabla \big(u^a_{\varepsilon}(\varepsilon\cdot)\big)_{|\partial \omega}u^b(\varepsilon \cdot)_{|\partial \omega}\, .
\]

\begin{theorem}\label{umk}
With the notation introduced in Proposition \ref{uk}, let $\{\tilde{a}_{n}\}_{n\in\mathbb{N}}$ be the sequence of functions from $\partial \omega$ to $\mathbb{R}$ defined by 
\[
\tilde{a}_{n}\equiv\sum_{k=0}^n \tilde{g}_{n-k}\tilde{v}_{k}\qquad\forall n\in\mathbb{N}\,.
\] Let $\{\tilde{\lambda}_{(n,l)}\}_{(n,l)\in\mathbb{N}^2\,,\;l\le n+1}$ be the family of functions from $\partial \omega$ to $\mathbb{R}$ defined by 
\[
\tilde{\lambda}_{(n,0)}\equiv \tilde{u}_{n}\,,\quad\tilde{\lambda}_{(n,1)}\equiv \tilde{a}_{n}\,,
\] for all $n\in\mathbb{N}$, and
\[
\tilde{\lambda}_{(n,l)}\equiv (-1)^{l-1} \sum_{k=l-1}^n \tilde{a}_{n-k} \sum_{\beta\in(\mathbb{N}\setminus\{0\})^{l-1}\,,\;|\beta|=k}\ \prod_{h=1}^{l-1} r_{\beta_h}
\] for all $n,l\in\mathbb{N}$ with $2\le l\le n+1$. 
  Then there exists $\tilde{\varepsilon}'\in]0,\varepsilon_0]\cap]0,1[$ such that 
  \begin{equation}\label{fuepsmseries}
\nu_{\omega}(t) \cdot \nabla \big(u^a_{\varepsilon}(\varepsilon t)\big) u^b(\varepsilon t)=\sum_{n=0}^\infty\varepsilon^n\sum_{l=0}^{n+1} \frac{\tilde{\lambda}_{(n,l)}(t)}{(r_0+(2\pi)^{-1}\log|\varepsilon|)^{l}} \qquad \forall t \in \partial \omega\, ,
\end{equation} for all $\varepsilon\in]-\tilde{\varepsilon}',\tilde{\varepsilon}'[\setminus\{0\}$. Moreover, the series 
\[
\sum_{n=0}^\infty\varepsilon^n\sum_{l=0}^{n+1} \frac{\tilde{\lambda}_{(n,l)}\eta^l}{(r_0\eta+(2\pi)^{-1})^{l}}\] converges normally in $C^{1,\alpha}(\partial \omega)$  for all $(\varepsilon,\eta)\in]-\tilde{\varepsilon}',\tilde{\varepsilon}'[\times]1/\log\tilde{\varepsilon}',-1/\log\tilde{\varepsilon}'[$.
 \end{theorem}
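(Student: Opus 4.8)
The plan is to deduce \eqref{fuepsmseries} from the representation \eqref{funepsm} of Proposition~\ref{funk} by a purely algebraic rearrangement, isolating the powers of the logarithmic factor
\[
\zeta\equiv\bigl(r_0+(2\pi)^{-1}\log|\varepsilon|\bigr)^{-1}
\]
and then collecting powers of $\varepsilon$. First I would rewrite the denominator occurring in \eqref{funepsm} as
\[
\sum_{k=0}^\infty r_k\varepsilon^k+(2\pi)^{-1}\log|\varepsilon|=\zeta^{-1}\Bigl(1+\zeta\sum_{k=1}^\infty r_k\varepsilon^k\Bigr).
\]
Since $\zeta\to0$ and $\sum_{k\ge1}r_k\varepsilon^k\to0$ as $\varepsilon\to0$ (the latter series converging by Proposition~\ref{uk}(i)), for $\varepsilon$ in a punctured neighbourhood of $0$ the geometric expansion
\[
\frac{1}{\sum_{k=0}^\infty r_k\varepsilon^k+(2\pi)^{-1}\log|\varepsilon|}=\sum_{m=0}^\infty(-1)^m\zeta^{m+1}\Bigl(\sum_{k=1}^\infty r_k\varepsilon^k\Bigr)^{m}
\]
converges absolutely. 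Expanding $\bigl(\sum_{k\ge1}r_k\varepsilon^k\bigr)^m=\sum_{k\ge m}\varepsilon^k\sum_{\beta\in(\mathbb{N}\setminus\{0\})^m,\,|\beta|=k}\prod_{h=1}^m r_{\beta_h}$ and multiplying by the Cauchy product $\bigl(\sum_k\tilde g_k\varepsilon^k\bigr)\bigl(\sum_k\tilde v_k\varepsilon^k\bigr)=\sum_n\tilde a_n\varepsilon^n$, I would collect the total power of $\varepsilon$: with $l\equiv m+1$, the coefficient of $\varepsilon^n\zeta^{l}$ contributed by the second summand of \eqref{funepsm} is exactly $(-1)^{l-1}\sum_{k=l-1}^n\tilde a_{n-k}\sum_{\beta\in(\mathbb{N}\setminus\{0\})^{l-1},\,|\beta|=k}\prod_{h=1}^{l-1}r_{\beta_h}=\tilde\lambda_{(n,l)}$; the case $l=1$ reduces to $\tilde a_n$ (only $k=0$ survives) and the first summand $\sum_{k\ge1}\tilde u_k\varepsilon^k$ of \eqref{funepsm} contributes the $l=0$ term $\tilde u_n$ (with $\tilde u_0=0$), matching the separate definitions of $\tilde\lambda_{(n,1)}$ and $\tilde\lambda_{(n,0)}$. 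Since $m\le k\le n$ forces $l\le n+1$, this gives \eqref{fuepsmseries} with $\zeta^l=\bigl(r_0+(2\pi)^{-1}\log|\varepsilon|\bigr)^{-l}$.

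To make these manipulations legitimate and to obtain the asserted \emph{normal} convergence in the two-variable form, I would introduce $\eta$ as an independent variable via the substitution $\zeta=\eta/(r_0\eta+(2\pi)^{-1})$, which is consistent with $\eta=1/\log|\varepsilon|$ since then $r_0+(2\pi)^{-1}\log|\varepsilon|=(r_0\eta+(2\pi)^{-1})/\eta$. In these variables the denominator $\eta\sum_{k=0}^\infty r_k\varepsilon^k+(2\pi)^{-1}$ is a real-analytic function of $(\varepsilon,\eta)$ near $(0,0)$ not vanishing there (its value at the origin is $(2\pi)^{-1}$), hence its reciprocal is real-analytic near $(0,0)$; by Proposition~\ref{funk} the numerator $\bigl(\sum_k\tilde g_k\varepsilon^k\bigr)\bigl(\sum_k\tilde v_k\varepsilon^k\bigr)$, as well as $\sum_{k\ge1}\tilde u_k\varepsilon^k$, is a real-analytic $C^{0,\alpha}(\partial\omega)$-valued function of $\varepsilon$ near $0$. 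By the standard calculus of real-analytic maps with values in Banach spaces, the function
\[
\Bigl(\sum_{k=0}^\infty\tilde g_k\varepsilon^k\Bigr)\Bigl(\sum_{k=0}^\infty\tilde v_k\varepsilon^k\Bigr)\frac{\eta}{\eta\sum_{k=0}^\infty r_k\varepsilon^k+(2\pi)^{-1}}+\sum_{k=1}^\infty\tilde u_k\varepsilon^k
\]
is then real-analytic in $(\varepsilon,\eta)$ on a bidisc around the origin with values in the Schauder class of the statement, hence equals a normally convergent double power series there; the reorganisation of the first paragraph identifies its coefficients with those of $\sum_n\varepsilon^n\sum_l\tilde\lambda_{(n,l)}\eta^l/(r_0\eta+(2\pi)^{-1})^l$. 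Choosing $\tilde\varepsilon'\le\tilde\varepsilon$ small enough that this series converges for $(\varepsilon,\eta)\in]-\tilde\varepsilon',\tilde\varepsilon'[\times]1/\log\tilde\varepsilon',-1/\log\tilde\varepsilon'[$, and then setting $\eta=1/\log|\varepsilon|$, recovers \eqref{fuepsmseries}. This is exactly the scheme of \cite[Thm.~6.3]{DaMuRo15}.

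The delicate point is not the bookkeeping but the quantitative control needed to justify the interchange of summations and the normal convergence of the double-indexed family, namely producing $C>0$ and $\rho\in]0,\tilde\varepsilon'[$ with $\|\tilde\lambda_{(n,l)}\|\le C\rho^{-n}$ \emph{uniformly} in $l\le n+1$. This follows from Cauchy-type estimates fed by the normal convergence statements of Propositions~\ref{uk} and~\ref{funk}: the sum $\sum_{\beta\in(\mathbb{N}\setminus\{0\})^{l-1},\,|\beta|=k}\prod_h r_{\beta_h}$ is the $\varepsilon^k$-coefficient of $\bigl(\sum_{j\ge1}r_j\varepsilon^j\bigr)^{l-1}$, hence bounded by $\rho^{-k}\bigl(\sum_{j\ge1}|r_j|\rho^j\bigr)^{l-1}$; summing the resulting geometric series in $k$ against the bounds $\|\tilde a_{n-k}\|\le C'\rho^{-(n-k)}$ absorbs the dependence on $l$ into a geometrically decaying factor $|\zeta(\eta)|^l$, provided $\eta$ is small. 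Combining these estimates with the domains $]-\varepsilon^*,\varepsilon^*[$ and $]-\tilde\varepsilon,\tilde\varepsilon[$ furnished by Propositions~\ref{uk} and~\ref{funk}, and shrinking $\rho$ and $\tilde\varepsilon'$ once more so that $\bigl(\sum_{j\ge1}|r_j|\rho^j\bigr)\sup_{|\eta|<1/|\log\tilde\varepsilon'|}|\zeta(\eta)|<1$, pins down $\tilde\varepsilon'$ and completes the argument.
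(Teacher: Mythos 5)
Your proof is correct and follows precisely the scheme the paper invokes by citation to \cite[Thm.~6.3]{DaMuRo15}: factor out $\zeta=(r_0+(2\pi)^{-1}\log|\varepsilon|)^{-1}$ from the denominator in Proposition~\ref{funk}, expand the reciprocal as a geometric series, take Cauchy products to read off $\tilde\lambda_{(n,l)}$ as the coefficient of $\varepsilon^n\zeta^l$, and justify the rearrangement and normal convergence via the two-variable substitution $\eta=1/\log|\varepsilon|$ together with Cauchy-type bounds on the coefficients. Since the paper itself gives no details beyond that citation, your argument supplies exactly what was left implicit and does not deviate from the intended route.
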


\medskip

\begin{rem}\label{rem:uepsm}
With the notation of Theorem \ref{umk}, a straightforward computation shows that
\[
\begin{split}
 \tilde{\lambda}_{(0,0)}=&\tilde{u}_{0}=0\, ,  \\
  \tilde{\lambda}_{(0,1)}=&\tilde{a}_{0}=u^a(0) u^b(0) \frac{\partial}{\partial \nu_{\omega}}v^-[\partial \omega, \rho^i_0]\, .
\end{split}
\]
\end{rem}

\subsection{Series expansion of  $\mathrm{Cap}_\Omega(\varepsilon\overline{\omega},u^a,u^b)$}\label{sec5}

We now wish to compute a series expansion of the $(u^a,u^b)$-capacity $\mathrm{Cap}_\Omega(\varepsilon\overline{\omega},u^a,u^b)$. Since $\mathrm{Cap}_\Omega(\varepsilon\overline{\omega},u^a,u^b)$ is represented as a combination of $\int_{\Omega_\varepsilon}\nabla u^a_\varepsilon \cdot \nabla u^b_\varepsilon \, dx$ and of $\int_{\varepsilon \omega}\nabla u^a \cdot \nabla u^b \, dx$, as a first step, we provide an expansion for $\int_{\varepsilon \omega}\nabla u^a \cdot \nabla u^b \, dx$ around $\varepsilon=0$. The term $\int_{\varepsilon \omega}\nabla u^a \cdot \nabla u^b \, dx$ depends analytically on $\varepsilon$. As a consequence, it can be expanded in a power series and we compute such a power series in the following lemma.

\begin{lem}\label{lem:nrguom}
Let $\{\xi_{n}\}_{n\in\mathbb{N}}$ be the sequence of real numbers defined by 
\[
\begin{split}
& \xi_0\equiv0\, , \qquad \xi_1\equiv0\, , \qquad\xi_{n}\equiv\sum_{j=1}^2\sum_{l=0}^{n-2} \int_{\omega} \partial_j u^a_{\#,l+1}(t)\partial_j u^b_{\#,n-l-1}(t)\, dt\qquad \forall n \geq 2\ .
\end{split}
\] 
Then there exists $\varepsilon_\xi\in]0,\varepsilon_0]$ such that 
\[
\int_{\varepsilon \omega}\nabla u^a\cdot \nabla u^b \, dx=\sum_{n=2}^\infty\xi_n \varepsilon^n
\]
for all $\varepsilon\in]-\varepsilon_\xi,\varepsilon_\xi[\setminus\{0\}$. Moreover, 
\[
\xi_2 = \nabla u^a(0)\cdot \nabla u^b(0) m_2(\omega) \, ,
\]
and the series 
\[
\sum_{n=2}^\infty\xi_n \varepsilon^n
\] 
converges absolutely for all $\varepsilon \in]-\varepsilon_\xi,\varepsilon_\xi[$. (The symbol $m_2(\dots)$ denotes the two-dimensional Lebesgue measure of a set).
\end{lem}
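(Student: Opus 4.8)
The plan is to reduce the integral to a rescaled integral over the fixed set $\omega$ and then to expand the rescaled gradients as normally convergent power series in $\varepsilon$. First I would perform the change of variables $x=\varepsilon t$, which gives
\[
\int_{\varepsilon \omega}\nabla u^a\cdot \nabla u^b \, dx=\varepsilon^2\int_{\omega}(\nabla u^a)(\varepsilon t)\cdot (\nabla u^b)(\varepsilon t)\, dt
\]
for all $\varepsilon\in ]-\varepsilon_0,\varepsilon_0[\setminus\{0\}$. Since $u^a$ and $u^b$ are analytic in a neighborhood of $0$ (assumption \eqref{eq:assf}), the partial derivatives $\partial_j u^a$ and $\partial_j u^b$ are analytic near $0$; using the notation $u^l_{\#,k}$ for the degree-$k$ homogeneous Taylor component of $u^l$ introduced in Proposition \ref{funk}, and the fact that $\partial_j u^l_{\#,k}$ is a homogeneous polynomial of degree $k-1$, one obtains $(\partial_j u^l)(\varepsilon t)=\sum_{k=0}^\infty \varepsilon^k\,(\partial_j u^l_{\#,k+1})(t)$. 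By Cauchy estimates on a small ball around $0$ together with the boundedness of $\omega$, the norms $\|\partial_j u^l_{\#,k+1}\|_{C^0(\overline\omega)}$ decay geometrically in $k$, so that these series converge normally in $C^0(\overline\omega)$ for all $\varepsilon\in]-\varepsilon_\xi,\varepsilon_\xi[$, with $\varepsilon_\xi\in]0,\varepsilon_0]$ suitably small (alternatively, this normal convergence follows from the analyticity of the rescaling/composition operator already exploited in \eqref{eq:Usharp}).

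Next I would substitute these expansions into the rescaled integral, form the Cauchy product of the two normally convergent series (which is again normally convergent in $C^0(\overline\omega)$), multiply by $\varepsilon^2$, and integrate term by term over $\omega$ — the interchange of summation and integration being justified by normal, hence uniform, convergence on $\overline\omega$. Collecting the coefficient of $\varepsilon^n$ and summing over $j\in\{1,2\}$, the index shift produced by the factor $\varepsilon^2$ forces the expansion to start at $n=2$, so that $\xi_0=\xi_1=0$, and for $n\ge 2$ the coefficient is exactly
\[
\xi_n=\sum_{j=1}^2\sum_{l=0}^{n-2}\int_\omega \partial_j u^a_{\#,l+1}(t)\,\partial_j u^b_{\#,n-l-1}(t)\,dt,
\]
matching the statement; absolute convergence of $\sum_{n\ge2}\xi_n\varepsilon^n$ on $]-\varepsilon_\xi,\varepsilon_\xi[$ is inherited from that of the Cauchy product. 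Finally, $\xi_2$ is read off directly: for $n=2$ only the term $l=0$ survives, and since $u^l_{\#,1}(t)=\nabla u^l(0)\cdot t$ we have $\partial_j u^l_{\#,1}\equiv\partial_j u^l(0)$, whence $\xi_2=\sum_{j=1}^2 \partial_j u^a(0)\,\partial_j u^b(0)\,m_2(\omega)=\nabla u^a(0)\cdot\nabla u^b(0)\,m_2(\omega)$.

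The only real point requiring care is the bookkeeping in the Cauchy product (the index shift that moves the lowest order from $0$ to $2$) together with the justification of the uniform geometric decay of the rescaled Taylor coefficients on $\overline\omega$, which is precisely what legitimates the rearrangement and the term-by-term integration; the rest is routine.
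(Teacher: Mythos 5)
Your proposal is correct and follows essentially the same route as the paper: change of variables $x=\varepsilon t$, expansion of $(\partial_j u^l)(\varepsilon\cdot)$ as a normally convergent power series with homogeneous polynomial coefficients $\partial_j u^l_{\#,k+1}$, Cauchy product, term-by-term integration, and the index shift from the factor $\varepsilon^2$. The only cosmetic difference is that you justify the normal convergence via Cauchy estimates in the sup norm (mentioning the composition-operator analyticity as an alternative), whereas the paper invokes the composition-operator analyticity results in $C^{0,\alpha}(\overline{\omega})$ directly; both are adequate here.
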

{\proof 
By the Theorem of change of variable in integrals, we have
\[
\int_{\varepsilon \omega}\nabla u^a\cdot \nabla u^b  \, dx=\varepsilon^2\int_{ \omega}(\nabla u^a)(\varepsilon t)\cdot (\nabla u^b)(\varepsilon t) \, dt\, ,
\]
for all $\varepsilon \in ]-\varepsilon_0,\varepsilon_0[\setminus\{0\}$. Let $j \in \{1,2\}$. Assumption \eqref{eq:assf}  on the analyticity of $u^a$ and $u^b$ and analyticity results for the composition operator (see  B\"{o}hme and Tomi~\cite[p.~10]{BoTo73}, 
Henry~\cite[p.~29]{He82}, Valent~\cite[Thm.~5.2, p.~44]{Va88}), imply that there exists $\varepsilon_\xi \in ]0,\varepsilon_0]$ such that the maps from $]-\varepsilon_\xi,\varepsilon_\xi[$ to $C^{0,\alpha}(\overline{\omega})$ which take $\varepsilon$ to $(\partial_j u^l)(\varepsilon \cdot)_{|\overline{\omega}}$, $l=a,b$ are real analytic. Also, possibly shrinking $\varepsilon_\xi$, one can verify that  for $\varepsilon \in ]-\varepsilon_\xi,\varepsilon_\xi[\setminus \{0\}$, 
\[
(\partial_j u^l )(\varepsilon t) =\sum_{h=0}^\infty \partial_j u^l_{\#,h+1}(t) \varepsilon^{h} \qquad \forall t \in \overline{\omega}\, , \quad l=a,b\, ,
\]
where the series $\sum_{h=0}^\infty \partial_j u^l_{\#, h+1|\overline{\omega}} \varepsilon^h$ converges normally in $C^{0,\alpha}(\overline{\omega})$  for all $\varepsilon \in ]-\varepsilon_\xi,\varepsilon_\xi[$, for $l=a,b$. Accordingly,
\[
(\partial_j u^a )(\varepsilon t) (\partial_j u^b )(\varepsilon t)=  \sum_{n=0}^\infty \Bigg(\sum_{l=0}^n \partial_j u^a_{\#,l+1}(t)\partial_j u^b_{\#,n-l+1}(t)\Bigg) \varepsilon^n \qquad \forall t \in \overline{\omega}\, , \forall \varepsilon \in ]-\varepsilon_\xi,\varepsilon_\xi[\setminus \{0\}\, .
\]
Moreover, by the continuity of the linear operator from $C^{0,\alpha}(\overline{\omega})$ to $\mathbb{R}$ which takes a function $h$ to its integral $\int_{\omega}h\, dt$, by summing on $j \in \{1,2\}$, possibly taking a smaller $\varepsilon_\xi$, we have that
\begin{equation}\label{eq:nrguom1}
\int_{ \omega}(\nabla u^a)(\varepsilon t)\cdot (\nabla u^b)(\varepsilon t) \, dt=\sum_{n=0}^\infty \bigg(\sum_{j=1}^2\sum_{l=0}^n \int_{\omega}\partial_j u^a_{\#,l+1}(t)\partial_j u^b_{\#,n-l+1}(t)\, dt\bigg) \varepsilon^n\, ,
\end{equation}
for all $\varepsilon \in ]-\varepsilon_\xi,\varepsilon_\xi[\setminus \{0\}$. Also,
\[
\sum_{j=1}^2 \int_{\omega}\partial_j u^a_{\#,1}(t)\partial_j u^b_{\#,1}(t)\, dt=  \nabla u^a(0)\cdot \nabla u^b(0) m_2(\omega)\, .
\]
Finally, by multiplying equation \eqref{eq:nrguom1} by $\varepsilon^2$, we deduce the validity of the lemma.\qed}

\medskip

We are now ready to deduce the validity of our main result of this section on the asymptotic behavior of $\mathrm{Cap}_\Omega(\varepsilon\overline{\omega},u^a, u^b)$.

\begin{theorem}\label{capk}
With the notation introduced in Proposition \ref{uk}, Theorem \ref{umk} and Lemma \ref{lem:nrguom}, let $\{c_{(n,l)}\}_{\substack{(n,l)\in\mathbb{N}^2\\\;l\le n+1}}$ be the family of real numbers defined by
\[
c_{(n,l)}\equiv-\int_{\partial \omega}\tilde{\lambda}_{(n,l)}\, d\sigma+\delta_{0,l}\xi_n\, ,
\] 
for all $n,l\in\mathbb{N}$ with $l\le n+1$ (where $\delta_{0,l}=1$ if $l=0$ and $\delta_{0,l}=0$ if $l\neq0$).   Then there exists $\varepsilon_\mathrm{c}\in]0,\varepsilon_0]\cap]0,1[$ such that  the series 
\[
\sum_{n=0}^\infty\varepsilon^n\sum_{l=0}^{n+1} \frac{c_{(n,l)}\eta^l}{(r_0\eta+(2\pi)^{-1})^{l}}\] converges  absolutely for all $(\varepsilon,\eta)\in]-\varepsilon_\mathrm{c},\varepsilon_\mathrm{c}[\times]1/\log\varepsilon_\mathrm{c},-1/\log\varepsilon_\mathrm{c}[$ and that

\[
\mathrm{Cap}_\Omega(\varepsilon\overline{\omega},u^a,u^b)=\sum_{n=0}^\infty\varepsilon^n\sum_{l=0}^{n+1} \frac{c_{(n,l)}}{(r_0+(2\pi)^{-1}\log|\varepsilon|)^{l}}
\]
for all $\varepsilon\in]-\varepsilon_{\mathrm{c}},\varepsilon_\mathrm{c}[\setminus\{0\}$.
 \end{theorem}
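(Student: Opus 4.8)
The plan is to assemble the expansion directly from the identity \eqref{eq:div}, which already splits $\mathrm{Cap}_\Omega(\varepsilon\overline{\omega},u^a,u^b)$ as the sum of the boundary integral $-\int_{\partial\omega}\nu_\omega(t)\cdot\nabla\big(u^a_\varepsilon(\varepsilon t)\big)u^b(\varepsilon t)\,d\sigma_t$ and the bulk integral $\int_{\varepsilon\omega}\nabla u^a\cdot\nabla u^b\,dx$. The second term is immediately controlled by Lemma \ref{lem:nrguom}: it equals $\sum_{n=2}^\infty\xi_n\varepsilon^n$ with absolute convergence for $|\varepsilon|<\varepsilon_\xi$, and since $\xi_0=\xi_1=0$ this can be recast in the target form $\sum_{n=0}^\infty\varepsilon^n\sum_{l=0}^{n+1}\delta_{0,l}\,\xi_n\,(r_0+(2\pi)^{-1}\log|\varepsilon|)^{-l}$, a series of the required shape whose only non-vanishing coefficients sit in the slot $l=0$. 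It is this observation --- that the bulk term contributes exactly the $\delta_{0,l}\xi_n$ part of $c_{(n,l)}$ --- that dictates the form of the statement.

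For the boundary term I would start from Theorem \ref{umk} and identity \eqref{fuepsmseries}, which gives, for $0<|\varepsilon|<\tilde\varepsilon'$,
\[
\nu_\omega(t)\cdot\nabla\big(u^a_\varepsilon(\varepsilon t)\big)u^b(\varepsilon t)=\sum_{n=0}^\infty\varepsilon^n\sum_{l=0}^{n+1}\frac{\tilde\lambda_{(n,l)}(t)}{(r_0+(2\pi)^{-1}\log|\varepsilon|)^{l}}\,,\qquad t\in\partial\omega\,,
\]
the companion series in the variable $\eta$ converging normally in $C^{1,\alpha}(\partial\omega)$ on $]-\tilde\varepsilon',\tilde\varepsilon'[\times]1/\log\tilde\varepsilon',-1/\log\tilde\varepsilon'[$. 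The core of the argument is then to integrate this identity over $\partial\omega$ and to exchange the integral with the double summation. To justify this I would use that the integration functional $f\mapsto\int_{\partial\omega}f\,d\sigma$ is bounded on $C^{0,\alpha}(\partial\omega)$ and that $C^{1,\alpha}(\partial\omega)$ embeds continuously into $C^{0,\alpha}(\partial\omega)$; hence for each fixed $\varepsilon$ with $0<|\varepsilon|<\tilde\varepsilon'$ the double series converges absolutely in $C^{0,\alpha}(\partial\omega)$ (specialising the normally convergent $\eta$-series at $\eta=1/\log|\varepsilon|$, which lies in the admissible interval), so term-by-term integration is legitimate and produces
\[
-\int_{\partial\omega}\nu_\omega(t)\cdot\nabla\big(u^a_\varepsilon(\varepsilon t)\big)u^b(\varepsilon t)\,d\sigma_t=\sum_{n=0}^\infty\varepsilon^n\sum_{l=0}^{n+1}\frac{-\int_{\partial\omega}\tilde\lambda_{(n,l)}\,d\sigma}{(r_0+(2\pi)^{-1}\log|\varepsilon|)^{l}}\,.
\]
Applying the same bounded functional to the normally convergent $\eta$-series of Theorem \ref{umk} also shows that $\sum_n\varepsilon^n\sum_l\big(-\int_{\partial\omega}\tilde\lambda_{(n,l)}\,d\sigma\big)\eta^l(r_0\eta+(2\pi)^{-1})^{-l}$ converges absolutely on the same product of intervals.

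To conclude I would set $\varepsilon_\mathrm{c}\equiv\min\{\tilde\varepsilon',\varepsilon_\xi\}\in\,]0,1[$ and add the two expansions on $]-\varepsilon_\mathrm{c},\varepsilon_\mathrm{c}[\setminus\{0\}$. Both the numerical series coming from the boundary integral and the power series $\sum_{n\ge2}\xi_n\varepsilon^n$ (equivalently its $l=0$ reinterpretation) converge absolutely there, and the corresponding $\eta$-series converge absolutely on $]-\varepsilon_\mathrm{c},\varepsilon_\mathrm{c}[\times]1/\log\varepsilon_\mathrm{c},-1/\log\varepsilon_\mathrm{c}[$ --- the bulk contribution being $\eta$-independent since it lives at $l=0$ --- so their sum has the same property and equals $\sum_{n=0}^\infty\varepsilon^n\sum_{l=0}^{n+1}c_{(n,l)}(r_0+(2\pi)^{-1}\log|\varepsilon|)^{-l}$ with $c_{(n,l)}=-\int_{\partial\omega}\tilde\lambda_{(n,l)}\,d\sigma+\delta_{0,l}\xi_n$, which is the asserted identity. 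The one genuinely delicate point is the justification of the term-by-term integration together with the transfer of the normal convergence through the integration functional; the remainder is index bookkeeping, hinging on the fact that $\int_{\varepsilon\omega}\nabla u^a\cdot\nabla u^b\,dx$ fills precisely the $l=0$ coefficients.
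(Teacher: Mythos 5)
Your proposal is correct and follows essentially the same route as the paper's (one-line) proof: decompose via \eqref{eq:div}, integrate formula \eqref{fuepsmseries} over $\partial\omega$ using the normal convergence guaranteed by Theorem \ref{umk}, and add in the $l=0$ contribution from Lemma \ref{lem:nrguom}. The only difference is that you make explicit the justification for term-by-term integration (boundedness of $f\mapsto\int_{\partial\omega}f\,d\sigma$ on $C^{0,\alpha}(\partial\omega)$ and the specialization $\eta=1/\log|\varepsilon|$), which the paper leaves implicit.
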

  \proof  By integrating over $\partial \omega$  formula \eqref{fuepsmseries} and adding the coefficients of Lemma \ref{lem:nrguom} and by Theorem \ref{umk}, we  immediately deduce the validity of the statement.\qed
 
 \medskip
 
\begin{rem}\label{rem:cepsm}
With the notation of Theorem \ref{capk}, we observe that Remark \ref{rem:uepsm} and a straightforward computation   based on Folland \cite[Lem.~3.30]{Fo95} imply that
\[
 c_{(0,0)}=0\, ,  \qquad
c_{(0,1)}= -u^a(0)u^b(0)\, .
\]
Moreover, if we denote by  $H^o_0$   the unique solution in $C^{1,\alpha}(\overline{\Omega})$ of
\[
\left\{
\begin{array}{ll}
\Delta H^o_0=0&\text{in }\Omega\,,\\
H^o_{0}(x)=S(x)&\text{for all }x\in\partial\Omega\,,
\end{array}
\right.
\]
and by $H^i_0$  the unique solution  in $C^{1,\alpha}_{\mathrm{loc}}(\mathbb{R}^2\setminus\omega)$ of
\[
\left\{
\begin{array}{ll}
\Delta H^i_0=0&\text{in }\mathbb{R}^2\setminus\overline{\omega}\,,\\
H^i_{0}(t)=S(t)&\text{for all }t\in\partial\omega\,,\\
\sup_{t\in\mathbb{R}^2\setminus\omega}|H^i_0(t)|<+\infty\,,
\end{array} 
\right.
\]
then by \cite[Prop.~7.3]{DaMuRo15} we have
\[
r_0= \lim_{t\to\infty}H^i_0(t)-H^o_0(0)\,.
\]
Accordingly,
\begin{equation}\label{eq:capepsfirst}
\begin{split}
\mathrm{Cap}_\Omega(\varepsilon\overline{\omega},u^a,u^b)=& -\frac{u^a(0)u^b(0)}{\lim_{t\to\infty}H^i_0(t)-H^o_0(0)+(2\pi)^{-1}\log|\varepsilon|}\\
&+\varepsilon\bigg(\sum_{n=1}^\infty\varepsilon^{n-1}\sum_{l=0}^{n+1} \frac{c_{(n,l)}}{(\lim_{t\to\infty}H^i_0(t)-H^o_0(0)+(2\pi)^{-1}\log|\varepsilon|)^{l}}\bigg)
\end{split}
\end{equation}
for all $\varepsilon\in]-\varepsilon_{\mathrm{c}},\varepsilon_\mathrm{c}[\setminus\{0\}$. 
\end{rem}

\subsection{Asymptotic behavior of $\mathrm{Cap}_\Omega(\varepsilon\overline{\omega},u^a,u^b)$ under vanishing assumption for $u^a$ and $u^b$}

The aim of this subsection is to discuss the behavior of $\mathrm{Cap}_\Omega(\varepsilon\overline{\omega},u^a,u^b)$ under the assumption that $u^a$ and $u^b$ vanish together with their derivatives up to a certain order. We work as in \cite[\S 5.1]{AbBoLeMu} and we assume that there exist $\overline{k}^a, \overline{k}^b \in \mathbb{N}\setminus \{0\}$ such that
\begin{equation}\label{eq:vanu}
D^\gamma u^l(0)=0 \quad \forall |\gamma| <\overline{k}^l\, , \qquad D^{\beta^l} u^l(0)\neq 0 \quad \mbox{for some $\beta^l \in \mathbb{N}^2$ with $|\beta^l|=\overline{k}^l$}\, ,\qquad l=a,b\,.
\end{equation}
By condition \eqref{eq:vanu} and Proposition \ref{thetak} we have that
\begin{equation}\label{eq:vantheta}
(\theta^o_k,\theta^i_k)=(0,0) \quad \forall k < \overline{k}^a\, , \qquad \theta^o_{\overline{k}^a}=0\, ,
\end{equation}
and that $\theta^i_{\overline{k}^a}$ is the unique solution in $C^{1,\alpha}(\partial\omega)_0$ of  
\begin{equation}\label{eq:thetaooverk}
\begin{split}
\frac{1}{2}\theta^i_{\overline{k}^a}(t)-W_{\omega}[\theta^i_{\overline{k}^a}](t)&=\overline{k}^a! \Bigg (u^a_{\#,\overline{k}^a}(t)-\int_{\partial\omega}u^a_{\#,\overline{k}^a}\rho^i_{0}\,d\sigma\Bigg) \qquad \forall t\in\partial\omega\,.
\end{split}
\end{equation}
By \eqref{eq:vantheta}, \eqref{eq:thetaooverk} and  by Proposition \ref{uk}, we deduce that
\begin{equation}\label{eq:vanum}
u^a_{\mathrm{m},k}=0 \qquad \forall k <\overline{k}^a\, ,\qquad u^a_{\mathrm{m},\overline{k}^a}=-\frac{1}{\overline{k}^a!}w^-[\partial \omega, \theta^i_{\overline{k}^a}]\, .
\end{equation}
Classical potential theory implies that $u^a_{\mathrm{m},\overline{k}^a}$ is the unique solution in $C^{1,\alpha}_{\mathrm{loc}}(\mathbb{R}^2 \setminus \omega)$ of 
\[
\left\{
\begin{array}{ll}
\Delta u^a_{\mathrm{m},\overline{k}^a}=0&\text{in }\mathbb{R}^2\setminus\overline{\omega}\,,\\
u^a_{\mathrm{m},\overline{k}^a}(t)=u^a_{\#,\overline{k}^a}(t)-\int_{\partial\omega}u^a_{\#,\overline{k}^a}\rho^i_{0}\,d\sigma&\text{for all }t\in\partial\omega\,,\\
\sup_{t\in\mathbb{R}^2\setminus\omega}|u^a_{\mathrm{m},\overline{k}^a}(t)|<+\infty\,.
\end{array} 
\right.
\]
Also, by assumption \eqref{eq:vanu} and Proposition \ref{uk} we deduce that
\begin{equation}\label{eq:vang}
g^a_{k}=0 \quad \forall k <\overline{k}^a\, ,\qquad g_{\overline{k}^a}=\frac{1}{\overline{k}^a!}\sum_{h=0}^{\overline{k}^a}  \binom{{\overline{k}^a}}{h}\int_{\partial\omega}s_1^hs_2^{{\overline{k}^a}-h}(\partial_1^h \partial_2^{{\overline{k}^a}-h}u^a)(0)\rho^i_{0}(s)\,d\sigma_s=\int_{\partial\omega}u^a_{\#,\overline{k}^a}\rho^i_{0}\,d\sigma\, .
\end{equation}
Then by \eqref{eq:vanu} and by Propostion \ref{funk} we verify that
\begin{equation}\label{eq:vanusharp}
u^l_{\#,k}=0 \qquad \forall k <\overline{k}^l\, , \qquad l=a,b\, .
\end{equation}
As a consequence, Proposition \ref{funk} and equations \eqref{eq:vanum}, \eqref{eq:vanusharp} imply
\begin{equation}\label{eq:vantildeu}
\tilde{u}_{k}=0 \qquad \forall k <\overline{k}^a+\overline{k}^b\, ,\qquad \tilde{u}_{\overline{k}^a+\overline{k}^b}=\Bigg( \frac{\partial u^a_{\mathrm{m},\overline{k}^a}}{\partial \nu_\omega}\Bigg)u_{\#,\overline{k}^b|\partial \omega}\, .
\end{equation}
Furthermore, by \eqref{eq:vang} and \eqref{eq:vanusharp} we have
\begin{equation}\label{eq:vantildeg}
\tilde{g}_{k}=0 \qquad \forall k <\overline{k}^a+\overline{k}^b\, ,\qquad \tilde{g}_{\overline{k}^a+\overline{k}^b}=g^a_{\overline{k}^a}u^b_{\#,\overline{k}^b|\partial \omega}=
 \bigg(\int_{\partial\omega}u^a_{\#,\overline{k}^a}\rho^i_{0}\,d\sigma \bigg)  u^b_{\#,\overline{k}^b|\partial \omega} \, .
\end{equation}
To compute the coefficients of the expansion of the $(u^a,u^b)$-capacity $\mathrm{Cap}_\Omega(\varepsilon\overline{\omega},u^a,u^b)$,  as an intermediate step, we consider the quantities $\tilde{a}_{n}, \tilde{\lambda}_{(n,l)}$ introduced in Theorem \ref{umk} for representing the behavior of $\nu_{\omega}(\cdot) \cdot \nabla \big(u^a_{\varepsilon}(\varepsilon\cdot)\big)_{|\partial \omega}u^b(\varepsilon \cdot)$. A straightforward computation based on \eqref{eq:vantildeu}, \eqref{eq:vantildeg} implies that
\[
\tilde{a}_{n}=0 \qquad \forall n <\overline{k}^a+\overline{k}^b\, ,\qquad \tilde{a}_{\overline{k}^a+\overline{k}^b}=\tilde{g}_{\overline{k}^a+\overline{k}^b}\tilde{v}_0=  \tilde{v}_0
 \bigg(\int_{\partial\omega}u^a_{\#,\overline{k}^a}\rho^i_{0}\,d\sigma \bigg)  u^b_{\#,\overline{k}^b|\partial \omega}\, ,
 \]
and accordingly
\begin{equation}\label{eq:vantilde0}
\tilde{\lambda}_{(n,0)}=0 \qquad \forall n <\overline{k}^a+\overline{k}^b\, ,\qquad \tilde{\lambda}_{\overline{k}^a+\overline{k}^b,0}=\tilde{u}_{\overline{k}^a+\overline{k}^b}= \Bigg( \frac{\partial u^a_{\mathrm{m},\overline{k}^a}}{\partial \nu_\omega}\Bigg) u^b_{\#,\overline{k}^b|\partial \omega}\, ,
\end{equation}
\begin{equation}\label{eq:vantilde1}
\tilde{\lambda}_{(n,1)}=0 \qquad \forall n <\overline{k}^a+\overline{k}^b\, ,\qquad \tilde{\lambda}_{\overline{k}^a+\overline{k}^b,1}=\tilde{a}_{\overline{k}^a+\overline{k}^b}=  \tilde{v}_0
 \bigg( \int_{\partial\omega}u^a_{\#,\overline{k}^a}\rho^i_{0}\,d\sigma\bigg) u^b_{\#,\overline{k}^b|\partial \omega}\, ,
\end{equation}
and
\begin{equation}\label{eq:vantildel}
\tilde{\lambda}_{(n,l)}=0 \qquad \text{$\forall (n,l)$ such that  $n-l+1<\overline{k}^a+\overline{k}^b$ and that $2\leq l \leq n+1$}\, .
\end{equation}
In particular, $\tilde{\lambda}_{(n,l)}=0$ for all $(n,l)$ such that  $n<\overline{k}^a+\overline{k}^b+1$ and that $2\leq l \leq n+1$. Furthermore, by a simple computation we have that
\[
\xi_{n}=0 \qquad \forall n < \overline{k}^a+\overline{k}^b\, , \qquad \xi_{ \overline{k}^a+\overline{k}^b}= \int_{\omega} \nabla u^a_{\#,\overline{k}^a}\cdot \nabla u^b_{\#,\overline{k}^b} \, dt\, .
\]
Finally, by Theorem \ref{capk} and by integrating equalities \eqref{eq:vantilde0}-\eqref{eq:vantildel}, we obtain
\[
\begin{split}
&c_{(n,0)}=0 \qquad \forall n <\overline{k}^a+\overline{k}^b\, ,\\ 
&c_{\overline{k}^a+\overline{k}^b,0}=-\int_{\partial \omega}\tilde{u}_{\overline{k}^a+\overline{k}^b}\, d\sigma+\int_{\omega}  \nabla u^a_{\#,\overline{k}^a}\cdot \nabla u^b_{\#,\overline{k}^b}\, dt\\
&\qquad=-\int_{\partial \omega}  \Bigg( \frac{\partial u^a_{\mathrm{m},\overline{k}^a}}{\partial \nu_\omega}\Bigg) u^b_{\#,\overline{k}^b|\partial \omega}\, d\sigma+\int_{\omega}  \nabla u^a_{\#,\overline{k}^a}\cdot \nabla u^b_{\#,\overline{k}^b}\, dt\,, 
\end{split}
\]
\[
c_{(n,1)}=0 \qquad \forall n <\overline{k}^a+\overline{k}^b\, ,\qquad c_{\overline{k}^a+\overline{k}^b,1}=-\int_{\partial \omega}\tilde{a}_{\overline{k}^a+\overline{k}^b}\,d\sigma=- \int_{\partial\omega}u^a_{\#,\overline{k}^a}\rho^i_{0}\,d\sigma \int_{\partial \omega} \tilde{v}_0
 u^b_{\#,\overline{k}^b|\partial \omega}\, d\sigma\, , 
\]
and
\[
c_{(n,l)}=0 \qquad \text{$\forall (n,l)$ such that  $n-l+1<\overline{k}^a+\overline{k}^b$ and that $2\leq l \leq n+1$}\, .
\]
In particular, $c_{(n,l)}=0$ for all $(n,l)$ such that  $n<\overline{k}^a+\overline{k}^b+1$ and that $2\leq l \leq n+1$.  
We also note that, since $u^a_{\mathrm{m},\overline{k}^a}=-\frac{1}{\overline{k}^a!}w^-[\partial \omega, \theta^i_{\overline{k}^a}]$, then $u^a_{\mathrm{m},\overline{k}^a}$ is harmonic at infinity (see  \eqref{eq:vanum}).
As a consequence, by the decay properties of its radial derivative (see  Folland \cite[Prop.~2.75]{Fo95}) and by the Divergence Theorem one verifies that
\[
\int_{\partial \omega}\frac{\partial u^a_{\mathrm{m},\overline{k}^a}}{\partial \nu_\omega}\, d\sigma=0\, .
\]
Then, for $l=a,b$, we denote by $\mathsf{u}^l_{\overline{k}^l}$ the unique solution in $C^{1,\alpha}_{\mathrm{loc}}(\mathbb{R}^2\setminus \omega)$ of
\begin{equation}\label{eq:ulk}
\left\{
\begin{array}{ll}
\Delta \mathsf{u}^l_{\overline{k}^l}=0&\text{in }\mathbb{R}^2\setminus\overline{\omega}\,,\\
\mathsf{u}^l_{\overline{k}^l}(t)=u^{l}_{\#,\overline{k}^l}(t)&\text{for all }t\in\partial\omega\,,\\
\sup_{t\in\mathbb{R}^2\setminus\omega}|\mathsf{u}^l_{\overline{k}^l}(t)|<+\infty\,.
\end{array} 
\right.
\end{equation}
Then clearly
\[
\mathsf{u}^a_{\overline{k}^a}=u^a_{\mathrm{m},\overline{k}^a}+\int_{\partial\omega}u^a_{\#,\overline{k}^a}\rho^i_{0}\,d\sigma\, ,
\]
and thus
\[
-\int_{\partial \omega}  \Bigg( \frac{\partial u^a_{\mathrm{m},\overline{k}^a}}{\partial \nu_\omega}\Bigg) u^b_{\#,\overline{k}^b|\partial \omega} \, d\sigma=-\int_{\partial \omega}  \Bigg( \frac{\partial \mathsf{u}^a_{\overline{k}^a}}{\partial \nu_\omega}\Bigg) \mathsf{u}^b_{\overline{k}^b|\partial \omega} \, d\sigma\, .
\]

On the other hand,  the harmonicity at infinity of $\mathsf{u}^a_{\overline{k}^a}, \mathsf{u}^b_{\overline{k}^b}$ and the Divergence Theorem imply that
\[
\int_{\mathbb{R}^2 \setminus \overline{\omega}}\nabla \mathsf{u}^a_{\overline{k}^a}\cdot \nabla \mathsf{u}^b_{\overline{k}^b}\, dt=-\int_{\partial \omega}  \Bigg( \frac{\partial \mathsf{u}^a_{\overline{k}^a}}{\partial \nu_\omega}\Bigg) \mathsf{u}^b_{\overline{k}^b|\partial \omega} \, d\sigma
\]
(see  Folland \cite[p.~118]{Fo95}). Hence,
\[
-\int_{\partial \omega}  \Bigg( \frac{\partial u^a_{\mathrm{m},\overline{k}^a}}{\partial \nu_\omega}\Bigg) u^b_{\#,\overline{k}^b|\partial \omega} \, d\sigma=\int_{\mathbb{R}^2 \setminus \overline{\omega}}\nabla \mathsf{u}^a_{\overline{k}^a}\cdot \nabla \mathsf{u}^b_{\overline{k}^b}\, dt\, .
\]
We now consider  
\[
- \int_{\partial\omega}u^a_{\#,\overline{k}^a}\rho^i_{0}\,d\sigma \int_{\partial \omega} \tilde{v}_0
 u^b_{\#,\overline{k}^b|\partial \omega}\, d\sigma\, .
 \]
First, we  note that
\[
\tilde{v}_0= \nu_{\omega}\cdot\nabla v_{\mathrm{m},0|\partial \omega}=\nu_{\omega}\cdot\nabla v^-[\partial \omega,\rho^i_0]_{|\partial \omega}\, .
\]
On the other hand, Proposition \ref{rhok} and the jump formula for the normal derivative of the single layer potential imply that
\[
\nu_{\omega}\cdot\nabla v^-[\partial \omega,\rho^i_0]_{|\partial \omega}=\frac{1}{2}\rho^i_0+W^*_{\omega}[ \rho^i_0]=\frac{1}{2}\rho^i_0+\frac{1}{2}\rho^i_0=\rho^i_0\, .
\]
Accordingly,
\[
\int_{\partial \omega} \tilde{v}_0
 u^b_{\#,\overline{k}^b|\partial \omega}\, d\sigma=\int_{\partial \omega}u^b_{\#,\overline{k}^b}\rho^i_{0}\,d\sigma\, .
\]
By \cite[Proof of Lem.~7.2]{DaMuRo15}, we have
\[
\int_{\partial \omega}u^l_{\#,\overline{k}^l}\rho^i_{0}\,d\sigma=\lim_{t\to \infty}\mathsf{u}^l_{\overline{k}^l}(t)\qquad l=a,b\, , 
\]
which implies
\[
- \int_{\partial\omega}u^a_{\#,\overline{k}^a}\rho^i_{0}\,d\sigma \int_{\partial \omega} \tilde{v}_0
 u^b_{\#,\overline{k}^b|\partial \omega}\, d\sigma=-\bigg(\lim_{t\to \infty}\mathsf{u}^a_{\overline{k}^a}(t)\bigg)\bigg(\lim_{t\to \infty}\mathsf{u}^b_{\overline{k}^b}(t)\bigg)\, .
\]
Under assumption \eqref{eq:vanu}, by Remark \ref{rem:cepsm} and formula \eqref{eq:capepsfirst}, we can now deduce the validity of the following.
 
 \begin{theorem}\label{thm:cepsmseries}
Let assumption \eqref{eq:vanu} hold. Then
 \begin{equation}\label{cepsmseries}
\begin{split}
\mathrm{Cap}_\Omega(\varepsilon\overline{\omega},u^a,u^b)=&\varepsilon^{\overline{k}^a+\overline{k}^b}\Bigg(\int_{\mathbb{R}^2 \setminus \overline{\omega}}\nabla \mathsf{u}^a_{\overline{k}^a}\cdot \nabla \mathsf{u}^b_{\overline{k}^b}\, dt+\int_{\omega}  \nabla u^a_{\#,\overline{k}^a}\cdot \nabla u^b_{\#,\overline{k}^b}\, dt\\
&\qquad \qquad -\frac{\Big(\lim_{t\to \infty}\mathsf{u}^a_{\overline{k}^a}(t)\Big)\Big(\lim_{t\to \infty}\mathsf{u}^b_{\overline{k}^b}(t)\Big)}{(\lim_{t\to\infty}H^i_0(t)-H^o_0(0)+(2\pi)^{-1}\log|\varepsilon|)}\Bigg) \\
 &+\sum_{n=\overline{k}^a+\overline{k}^b+1}^\infty\varepsilon^n\sum_{l=0}^{n-(\overline{k}^a+\overline{k}^b)+1} \frac{c_{(n,l)}}{(\lim_{t\to\infty}H^i_0(t)-H^o_0(0)+(2\pi)^{-1}\log|\varepsilon|)^{l}}\, ,
\end{split}
\end{equation} 
for all $\varepsilon\in]-\varepsilon_{\mathrm{c}},\varepsilon_\mathrm{c}[\setminus\{0\}$. 
\end{theorem}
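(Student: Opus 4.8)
The plan is to propagate the vanishing hypothesis \eqref{eq:vanu} through the whole hierarchy of auxiliary objects built in Propositions \ref{rhok}, \ref{thetak}, \ref{uk}, \ref{funk}, Theorem \ref{umk} and Lemma \ref{lem:nrguom}, thereby determining exactly which coefficients $c_{(n,l)}$ of the series of Theorem \ref{capk} vanish and computing the first two that do not; after that, a sequence of integrations by parts recasts those leading coefficients in the intrinsic form displayed in \eqref{cepsmseries}.

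First I would feed \eqref{eq:vanu} into the recursive formulas of Proposition \ref{thetak}. Since the inhomogeneous terms there are linear combinations of the derivatives $\partial_1^h\partial_2^{k-h}u^a(0)$, they vanish identically for $k<\overline{k}^a$; hence $(\theta^o_k,\theta^i_k)=(0,0)$ for $k<\overline{k}^a$, $\theta^o_{\overline{k}^a}=0$, and $\theta^i_{\overline{k}^a}$ solves \eqref{eq:thetaooverk}. Carrying this into Proposition \ref{uk} gives $u^a_{\mathrm{m},k}=0$ and $g^a_k=0$ for $k<\overline{k}^a$, together with the explicit boundary value problem for $u^a_{\mathrm{m},\overline{k}^a}$ and the value $g^a_{\overline{k}^a}=\int_{\partial\omega}u^a_{\#,\overline{k}^a}\rho^i_0\,d\sigma$; likewise the Taylor polynomials obey $u^l_{\#,k}=0$ for $k<\overline{k}^l$. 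Plugging these into the convolution-type definitions of Proposition \ref{funk} yields $\tilde u_k=\tilde g_k=0$ for $k<\overline{k}^a+\overline{k}^b$ and the stated expressions for $\tilde u_{\overline{k}^a+\overline{k}^b}$ and $\tilde g_{\overline{k}^a+\overline{k}^b}$; the definitions in Theorem \ref{umk} then force $\tilde a_n=0$ and $\tilde\lambda_{(n,l)}=0$ whenever $n-l+1<\overline{k}^a+\overline{k}^b$; and Lemma \ref{lem:nrguom} gives $\xi_n=0$ for $n<\overline{k}^a+\overline{k}^b$ with $\xi_{\overline{k}^a+\overline{k}^b}=\int_\omega\nabla u^a_{\#,\overline{k}^a}\cdot\nabla u^b_{\#,\overline{k}^b}\,dt$. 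Inserting all of this into the defining relation $c_{(n,l)}=-\int_{\partial\omega}\tilde\lambda_{(n,l)}\,d\sigma+\delta_{0,l}\xi_n$ of Theorem \ref{capk} shows that every coefficient with $n-l+1<\overline{k}^a+\overline{k}^b$ (in particular every coefficient with $n<\overline{k}^a+\overline{k}^b$) vanishes, and that at the index $n=\overline{k}^a+\overline{k}^b$ only the terms $l=0,1$ survive, with $c_{\overline{k}^a+\overline{k}^b,0}=-\int_{\partial\omega}(\partial_{\nu_\omega}u^a_{\mathrm{m},\overline{k}^a})\,u^b_{\#,\overline{k}^b}\,d\sigma+\int_\omega\nabla u^a_{\#,\overline{k}^a}\cdot\nabla u^b_{\#,\overline{k}^b}\,dt$ and $c_{\overline{k}^a+\overline{k}^b,1}=-\int_{\partial\omega}u^a_{\#,\overline{k}^a}\rho^i_0\,d\sigma\int_{\partial\omega}\tilde v_0\,u^b_{\#,\overline{k}^b}\,d\sigma$.

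Next I would recast these two coefficients intrinsically using the limit functions $\mathsf{u}^l_{\overline{k}^l}$ of \eqref{eq:ulk}. Since $\mathsf{u}^a_{\overline{k}^a}=u^a_{\mathrm{m},\overline{k}^a}+\int_{\partial\omega}u^a_{\#,\overline{k}^a}\rho^i_0\,d\sigma$, the two share a normal derivative on $\partial\omega$; using $\int_{\partial\omega}\partial_{\nu_\omega}u^a_{\mathrm{m},\overline{k}^a}\,d\sigma=0$ (harmonicity at infinity plus decay of the radial derivative, via the Divergence Theorem) and then Green's identity for functions harmonic at infinity, one gets $-\int_{\partial\omega}(\partial_{\nu_\omega}u^a_{\mathrm{m},\overline{k}^a})u^b_{\#,\overline{k}^b}\,d\sigma=-\int_{\partial\omega}(\partial_{\nu_\omega}\mathsf{u}^a_{\overline{k}^a})\mathsf{u}^b_{\overline{k}^b}\,d\sigma=\int_{\R^2\setminus\overline\omega}\nabla\mathsf{u}^a_{\overline{k}^a}\cdot\nabla\mathsf{u}^b_{\overline{k}^b}\,dt$. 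For the $l=1$ coefficient I would use the jump relation $\tilde v_0=\nu_\omega\cdot\nabla v^-[\partial\omega,\rho^i_0]=\rho^i_0$ (which follows from $W^*_\omega[\rho^i_0]=\tfrac12\rho^i_0$ in Proposition \ref{rhok}) together with $\int_{\partial\omega}u^l_{\#,\overline{k}^l}\rho^i_0\,d\sigma=\lim_{t\to\infty}\mathsf{u}^l_{\overline{k}^l}(t)$ from \cite[Proof of Lem.~7.2]{DaMuRo15}, obtaining $c_{\overline{k}^a+\overline{k}^b,1}=-(\lim_{t\to\infty}\mathsf{u}^a_{\overline{k}^a})(\lim_{t\to\infty}\mathsf{u}^b_{\overline{k}^b})$.

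Finally, I would substitute these identifications, together with $r_0=\lim_{t\to\infty}H^i_0(t)-H^o_0(0)$ from Remark \ref{rem:cepsm} and formula \eqref{eq:capepsfirst}, into the convergent series of Theorem \ref{capk}: the terms with $n<\overline{k}^a+\overline{k}^b$ drop out, the term $n=\overline{k}^a+\overline{k}^b$ produces the explicit prefactor $\varepsilon^{\overline{k}^a+\overline{k}^b}(\,\cdots\,)$ carrying only $l=0,1$, and for $n>\overline{k}^a+\overline{k}^b$ the inner sum may be truncated at $l=n-(\overline{k}^a+\overline{k}^b)+1$ by \eqref{eq:vantildel}; this yields exactly \eqref{cepsmseries}, whose absolute convergence on $]-\varepsilon_{\mathrm{c}},\varepsilon_{\mathrm{c}}[\setminus\{0\}$ is inherited from Theorem \ref{capk}. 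The only genuinely delicate part of the argument is the bookkeeping of the double index $(n,l)$ — making sure the vanishing range $n-l+1<\overline{k}^a+\overline{k}^b$ is propagated correctly through the nested sums defining $\tilde\lambda_{(n,l)}$ — whereas the passages from boundary integrals to energy integrals, though repeated, are routine applications of the Divergence Theorem for functions harmonic at infinity.
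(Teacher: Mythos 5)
Your proposal is correct and follows essentially the same route as the paper: propagating the vanishing hypothesis \eqref{eq:vanu} through Propositions \ref{thetak}, \ref{uk}, \ref{funk}, Theorem \ref{umk} and Lemma \ref{lem:nrguom} to locate the vanishing $c_{(n,l)}$, rewriting $c_{(\overline{k}^a+\overline{k}^b,0)}$ and $c_{(\overline{k}^a+\overline{k}^b,1)}$ via $\mathsf{u}^l_{\overline{k}^l}$, the identity $\tilde v_0=\rho^i_0$ and the Divergence Theorem for functions harmonic at infinity, and then substituting into \eqref{eq:capepsfirst}. The only caveat is a slip of phrasing: the vanishing of the $l=0$ and $l=1$ coefficients for $n<\overline{k}^a+\overline{k}^b$ is not a special case of the condition $n-l+1<\overline{k}^a+\overline{k}^b$ (which governs $l\ge2$), but you establish both facts separately anyway, exactly as the paper does.
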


\begin{rem}\label{rem:cepsmseries}
Therefore, by \eqref{cepsmseries} we have
 \[
\mathrm{Cap}_\Omega(\varepsilon\overline{\omega},u^a,u^b)=\varepsilon^{\overline{k}^a+\overline{k}^b}\Bigg(\int_{\mathbb{R}^2 \setminus \overline{\omega}}\nabla \mathsf{u}^a_{\overline{k}^a}\cdot \nabla \mathsf{u}^b_{\overline{k}^b}\, dt+\int_{\omega}  \nabla u^a_{\#,\overline{k}^a}\cdot \nabla u^b_{\#,\overline{k}^b}\, dt\Bigg)+o(\varepsilon^{\overline{k}^a+\overline{k}^b}) \qquad \text{as }\varepsilon \to 0\, .
\]
Moreover, we note that the terms $\int_{\mathbb{R}^2 \setminus \overline{\omega}}\nabla \mathsf{u}^a_{\overline{k}^a}\cdot \nabla \mathsf{u}^b_{\overline{k}^b}\, dt$ and $\int_{\omega}  \nabla u^a_{\#,\overline{k}^a}\cdot \nabla u^b_{\#,\overline{k}^b}\, dt$ depend  both on the geometrical properties of the set $\omega$ and on the behavior at $0$ of the functions $u^a$ and $u^b$ (but not on $\Omega$).
\end{rem}

\section{Perturbation of eigenvalues}

In order to find an approximation of the perturbed eigenvalues $\{\lambda_j^\eps\}$, we use a slight modification of a lemma from G. Courtois \cite{Courtois1995}, itself based on the work of Y. Colin de Verdière \cite{ColindeV1986}.  For completeness,  we give its proof in  Appendix \ref{app:smallEVs}.

\begin{prop}\label{p:appEV}
Let $(\mathcal H, \|\cdot\|)$  be a Hilbert space and $q$ be a quadratic form, semi-bounded from below (not necessarily positive), with
domain $\mathcal D$ dense in $\mathcal H$ and with discrete spectrum $\{ \nu_i \}_{i\geq1}$. Let $\{ g_i \}_{i\geq1}$ be an orthonormal basis of eigenvectors of $q$. Let $N$ and $m$ be positive integers, $F$ an $m$-dimensional subspace of $\mathcal D$ and $\{ \xi_i^F\}_{i=1}^m$ the eigenvalues of the restriction of $q$ to $F$.

Assume that there exist positive constants $\gamma$ and $\delta$ such that
\begin{itemize}
 \item[(H1)] $ 0<\delta<\gamma/\sqrt2$;
 \item[(H2)] for all $i\in\{1,\dots,m\}$, $|\nu_{N+i-1}|\le\gamma$, $\nu_{N+m}\ge \gamma$ and, if $N\ge2$, $\nu_{N-1}\le-\gamma$;
 \item[(H3)] $|q(\varphi,g)|\leq \delta\, \|\varphi \|\,\|g\|$ for all $g\in\mathcal D$ and $\varphi \in F$. 
\end{itemize}
Then we have
\begin{itemize}
 \item[(i)] $\left|\nu_{N+i-1}- \xi_i^F \right|\le\frac{ 4}{\gamma}\delta^2$ for all $i=1,\ldots,m$; 
 \item[(ii)] $\left\| \Pi_N - \mathbb{I}\right\|_{\mathcal L(F,\mathcal H)} \leq { \sqrt 2}\delta/\gamma$,  where $\Pi_N$ is the projection onto the subspace of $\mathcal D$ spanned by $\{g_N,\ldots,g_{N+m-1}\}$. 
\end{itemize}
\end{prop}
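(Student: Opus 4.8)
The plan is to follow the strategy of Colin de Verdière's \emph{Lemma on small eigenvalues}, adapting it to the present slightly more general setting (the form $q$ need not be positive, and we require estimates for a full block of eigenvalues, not just the first one). The geometric picture is this: hypothesis (H3) says that, on the $m$-dimensional subspace $F$, the form $q$ is ``small'' in a quantitative sense; hypotheses (H1)--(H2) say that there is a spectral gap of size $\gamma$ isolating the block $\nu_N,\dots,\nu_{N+m-1}$ from the rest of the spectrum, with the preceding eigenvalue (if any) deeply negative and the following one large and positive. We want to conclude that $F$ is close to the spectral subspace $\mathrm{Ran}(\Pi_N)$ and that the eigenvalues of $q|_F$ approximate the $\nu_{N+i-1}$.

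First I would set up notation: write $\mathcal H = \mathcal H_- \oplus \mathcal H_0 \oplus \mathcal H_+$ for the orthogonal decomposition into the spans of $\{g_i : i \le N-1\}$, $\{g_N,\dots,g_{N+m-1}\}$, and $\{g_i : i \ge N+m\}$ respectively, with $\Pi_N$ the orthogonal projection onto $\mathcal H_0$ and $\Pi_\pm$ the projections onto $\mathcal H_\pm$. The first key step is to prove (ii): take $\varphi \in F$ with $\|\varphi\|=1$ and decompose $\varphi = \varphi_- + \varphi_0 + \varphi_+$. Applying (H3) with $g = \varphi_+$ and using that $q(\varphi,\varphi_+) = q(\varphi_+,\varphi_+) + q(\varphi_-,\varphi_+) + q(\varphi_0,\varphi_+) = q(\varphi_+,\varphi_+)$ (since the $\mathcal H_\pm,\mathcal H_0$ are $q$-orthogonal, being spanned by eigenvectors), we get $\nu_{N+m}\|\varphi_+\|^2 \le q(\varphi_+,\varphi_+) = q(\varphi,\varphi_+) \le \delta\|\varphi_+\|$, hence $\|\varphi_+\| \le \delta/\gamma$ by (H2); symmetrically, testing against $\varphi_-$ and using $\nu_{N-1}\le -\gamma$ gives $-\gamma\|\varphi_-\|^2 \ge q(\varphi_-,\varphi_-) = q(\varphi,\varphi_-) \ge -\delta\|\varphi_-\|$, so $\|\varphi_-\|\le\delta/\gamma$. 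Therefore $\|(\mathbb I - \Pi_N)\varphi\|^2 = \|\varphi_-\|^2 + \|\varphi_+\|^2 \le 2\delta^2/\gamma^2$, which is exactly (ii). A consequence worth recording immediately: since $2\delta^2/\gamma^2 < 1$ by (H1), $\Pi_N$ restricted to $F$ is injective, and because $\dim F = m = \dim \mathcal H_0$ it is an isomorphism $F \to \mathcal H_0$; in particular $\|\varphi_0\|^2 = 1 - \|\varphi_-\|^2 - \|\varphi_+\|^2 \ge 1 - 2\delta^2/\gamma^2$.

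For part (i), I would use the min-max characterization in both directions. For the upper bound on $\nu_{N+i-1}$: since $q$ has only $N-1$ eigenvalues below the block, $\nu_{N+i-1} = \min_{\substack{W \subseteq \mathcal D \\ \dim W = N-1+i}} \max_{0\ne w\in W} \frac{q(w,w)}{\|w\|^2}$; choosing $W = \mathcal H_- \oplus G_i$ where $G_i \subseteq F$ is an $i$-dimensional subspace realizing $\xi_i^F$ as $\max_{G_i} q/\|\cdot\|^2$, one estimates $q(w,w)$ for $w = w_- + \psi$ with $w_-\in\mathcal H_-$, $\psi\in G_i$: here $q(w,w) = q(w_-,w_-) + 2q(w_-,\psi) + q(\psi,\psi) \le 0 + 2\delta\|w_-\|\|\psi\| + \xi_i^F\|\psi\|^2$ — wait, I need to be careful that $q(w_-,w_-)\le 0$, which holds since $w_-$ lies in the span of eigenvectors with eigenvalue $\le \nu_{N-1} \le -\gamma < 0$. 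This does not immediately give the clean bound, so instead I would argue directly with $F$ itself: for the subspace $G_i\subseteq F$ of dimension $i$, the space $\Pi_-^\perp$-type manipulation is cleaner if one works with $\widetilde G_i := \Pi_N(G_i)\oplus\{g_j : j\le N-1\}$, which has dimension $N-1+i$, and estimates the Rayleigh quotient of $q$ there using (H3) and the near-orthogonality from (ii). The bound $|\xi_i^F - \nu_{N+i-1}| \le 4\delta^2/\gamma$ then comes out by combining: for $\varphi\in F$, $|q(\varphi,\varphi) - q(\varphi_0,\varphi_0)| = |q(\varphi,\varphi) - q(\varphi,\varphi_0)| = |q(\varphi,\varphi_+) + q(\varphi,\varphi_-)| \le 2\delta\cdot(\delta/\gamma)$ using (H3) twice and the bounds on $\|\varphi_\pm\|$; meanwhile $q(\varphi_0,\varphi_0) = \sum \nu_{N+j-1}|\langle\varphi,g_{N+j-1}\rangle|^2$ lies between $\nu_N\|\varphi_0\|^2$ and $\nu_{N+m-1}\|\varphi_0\|^2$, and $\|\varphi_0\|^2 = 1 - O(\delta^2/\gamma^2)$ with $|\nu_{N+j-1}|\le\gamma$. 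Feeding these into the min-max identities for $q|_F$ and for $q$ (the isomorphism $\Pi_N : F \to \mathcal H_0$ transports one min-max to the other up to the error terms just bounded), and tracking constants carefully, yields $|\nu_{N+i-1} - \xi_i^F| \le 4\delta^2/\gamma$.

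The main obstacle I anticipate is \emph{bookkeeping the constants} so that the final bound is exactly $4\delta^2/\gamma$ rather than some larger multiple: one must avoid wasteful steps, use the $q$-orthogonality of the three spectral subspaces to kill cross terms, exploit that $\varphi_0$ has norm close to $1$ (not just bounded), and apply (H3) with sharp choices of the second argument. A secondary subtlety is the bidirectional min-max argument: one inequality bounds $\xi_i^F$ above using test subspaces built from $F$, the other bounds $\nu_{N+i-1}$ above (and below, via the analogous argument for $-q$ restricted appropriately, or by a direct argument on the block $\mathcal H_0$) using test subspaces built from $\mathcal H_0$ pulled back through $(\Pi_N|_F)^{-1}$, whose operator norm is controlled by $(1 - 2\delta^2/\gamma^2)^{-1/2}$ from (ii). Everything else is routine Hilbert space manipulation.
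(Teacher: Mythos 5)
Your proposal is correct and follows essentially the same route as the paper's proof in Appendix~B: the same three-way spectral decomposition $\varphi = \varphi_- + \varphi_0 + \varphi_+$ (the paper writes $h_\pm$ for $\varphi_\pm$ and $g$ for $\varphi_0$), the same use of $q$-orthogonality to kill cross terms and combine (H2)--(H3) into $\|\varphi_\pm\|\le\delta/\gamma$, the same two-source error analysis for the Rayleigh quotient difference ($|q(\varphi)-q(\varphi_0)|\le 2\delta^2/\gamma$ plus a renormalization error $\le 2\delta^2/\gamma$ from $\|\varphi_0\|^2<1$), and the same transport of the min-max between $F$ and $\mathrm{Ran}(\Pi_N)$ via the bijection $\Pi_N|_F$. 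Your version of step (ii) is in fact marginally more direct than the paper's (you avoid the intermediate $\sqrt{|q(h_\pm)|}$ inequality), but the substance is identical.
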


Our subsequent analysis is close to  \cite[Proof of Theorem 1.2]{Courtois1995},  except that we replace the standard capacity with our generalized $u$-capacity (or $(u,v)$-capacity). Before proceeding, we recall the following crucial result from \cite{AbFeHiLe2019}. 
\begin{lem}(\cite[Lemma A.1]{AbFeHiLe2019})\label{l:a1}
 For any $f\in H^1_0(\Omega)$ we have
 \[
  \int_{\Omega} |V_{K_\eps,f}|^2 \, dx= o(\mathrm{Cap}_\Omega(K_\eps,f)) \quad \text{as }\eps\to0.
 \]
\end{lem}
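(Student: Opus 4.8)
The plan is to prove Lemma~\ref{l:a1} by expanding the potential $V_\eps\equiv V_{K_\eps,f}$ along the Dirichlet eigenbasis of $\Omega$ and showing that only its low-frequency part is relevant, while that part is forced to be negligible compared with the capacity. Let $\{g_i\}_{i\ge1}$ be an $L^2(\Omega)$-orthonormal basis of eigenfunctions of $q_\Omega$, with eigenvalues $\lambda_i\equiv\lambda_i(\Omega)$, and write $V_\eps=\sum_{i\ge1}a_ig_i$ with $a_i\equiv\langle V_\eps,g_i\rangle$. Since $V_\eps\in H^1_0(\Omega)$ and $\{g_i/\sqrt{\lambda_i}\}_{i\ge1}$ is $q_\Omega$-orthonormal, Parseval in $L^2(\Omega)$ and in $(H^1_0(\Omega),q_\Omega)$ gives
\[
\int_\Omega|V_\eps|^2\,dx=\sum_{i\ge1}a_i^2,\qquad\qquad \mathrm{Cap}_\Omega(K_\eps,f)=\int_\Omega|\nabla V_\eps|^2\,dx=\sum_{i\ge1}\lambda_ia_i^2.
\]

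First I would establish the identity $\lambda_ia_i=\mathrm{Cap}_\Omega(K_\eps,f,g_i)$. On one hand, testing the weak eigenvalue equation for $g_i$ against $V_\eps\in H^1_0(\Omega)$ yields $\int_\Omega\nabla V_\eps\cdot\nabla g_i\,dx=\lambda_i\int_\Omega V_\eps g_i\,dx=\lambda_ia_i$. On the other hand, $g_i-V_{K_\eps,g_i}\in H^1_0(\Omega_\eps)$, and by the weak formulation in \eqref{eq:dircapKu} (equivalently, by Remark~\ref{rem:geom}) the function $V_\eps=V_{K_\eps,f}$ is $q_\Omega$-orthogonal to $H^1_0(\Omega_\eps)$; hence $\int_\Omega\nabla V_\eps\cdot\nabla g_i\,dx=\int_\Omega\nabla V_{K_\eps,f}\cdot\nabla V_{K_\eps,g_i}\,dx=\mathrm{Cap}_\Omega(K_\eps,f,g_i)$. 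The Cauchy--Schwarz inequality for the Dirichlet form then gives $\lambda_i^2a_i^2\le\mathrm{Cap}_\Omega(K_\eps,f)\,\mathrm{Cap}_\Omega(K_\eps,g_i)$, so, using $\lambda_i\ge\lambda_1>0$,
\[
a_i^2\le\frac{1}{\lambda_1^2}\,\mathrm{Cap}_\Omega(K_\eps,f)\,\mathrm{Cap}_\Omega(K_\eps,g_i)\qquad\text{for all }i\ge1.
\]

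Finally, I would fix $\Lambda>0$ and split $\int_\Omega|V_\eps|^2\,dx=\sum_{\lambda_i\le\Lambda}a_i^2+\sum_{\lambda_i>\Lambda}a_i^2$. The tail is at most $\Lambda^{-1}\sum_{i\ge1}\lambda_ia_i^2=\Lambda^{-1}\mathrm{Cap}_\Omega(K_\eps,f)$, and the finite low-frequency part is, by the previous bound, at most $\lambda_1^{-2}\mathrm{Cap}_\Omega(K_\eps,f)\sum_{\lambda_i\le\Lambda}\mathrm{Cap}_\Omega(K_\eps,g_i)$. It therefore remains to show that $\mathrm{Cap}_\Omega(K_\eps,g_i)\to0$ as $\eps\to0$ for each fixed $i$. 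For this I would use that $\mathrm{Cap}_\Omega(K)=0$ and that the eigenfunction $g_i$, being real-analytic in $\Omega$, is bounded together with $\nabla g_i$ on some neighbourhood $U\Subset\Omega$ of the compact set $K$: choosing cut-offs $\psi_n\in C_c^\infty(\Omega)$ with $\psi_n\equiv1$ near $K$, $\mathrm{supp}\,\psi_n\subseteq U$, $0\le\psi_n\le1$ and $\|\nabla\psi_n\|_{L^2(\Omega)}\to0$ (possible precisely because $\mathrm{Cap}_\Omega(K)=0$; by the Poincar\'e inequality also $\|\psi_n\|_{L^2(\Omega)}\to0$), the function $(1-\psi_n)g_i$ vanishes in a neighbourhood of $K$, hence belongs to $H^1_0(\Omega_\eps)$ once $\eps$ is small enough; consequently
\[
\mathrm{Cap}_\Omega(K_\eps,g_i)\le\int_\Omega|\nabla(\psi_ng_i)|^2\,dx\le2\|g_i\|_{L^\infty(U)}^2\|\nabla\psi_n\|_{L^2(\Omega)}^2+2\|\nabla g_i\|_{L^\infty(U)}^2\|\psi_n\|_{L^2(\Omega)}^2,
\]
so that $\limsup_{\eps\to0}\mathrm{Cap}_\Omega(K_\eps,g_i)$ is bounded by the right-hand side for every $n$, and letting $n\to\infty$ shows it is $0$. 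Combining, $\int_\Omega|V_\eps|^2\,dx\le\bigl(o(1)+\Lambda^{-1}\bigr)\mathrm{Cap}_\Omega(K_\eps,f)$ as $\eps\to0$; letting $\Lambda\to\infty$ then gives $\int_\Omega|V_\eps|^2\,dx=o(\mathrm{Cap}_\Omega(K_\eps,f))$ (the case $\mathrm{Cap}_\Omega(K_\eps,f)=0$, where $V_\eps=0$, being trivial). The only genuinely non-formal step is this last continuity property $\mathrm{Cap}_\Omega(K_\eps,g_i)\to0$, which is precisely where the hypothesis $\mathrm{Cap}_\Omega(K)=0$ and the regularity of eigenfunctions enter; the rest is Parseval and Cauchy--Schwarz.
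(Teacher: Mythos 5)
The paper does not prove Lemma~\ref{l:a1}; it imports it from \cite{AbFeHiLe2019}, so there is no in-text proof to compare against. Assessed on its own, your argument is correct. The identity $\lambda_i a_i=\mathrm{Cap}_\Omega(K_\eps,f,g_i)$ is obtained correctly from the weak eigenvalue equation together with the $q_\Omega$-orthogonality of $V_{K_\eps,f}$ to $H^1_0(\Omega_\eps)$ (Remark~\ref{rem:geom}); Cauchy--Schwarz for the Dirichlet form then bounds $a_i^2$ by $\lambda_1^{-2}\mathrm{Cap}_\Omega(K_\eps,f)\,\mathrm{Cap}_\Omega(K_\eps,g_i)$; the tail estimate $\sum_{\lambda_i>\Lambda}a_i^2\le\Lambda^{-1}\mathrm{Cap}_\Omega(K_\eps,f)$ is immediate from $\sum\lambda_i a_i^2=\mathrm{Cap}_\Omega(K_\eps,f)$; and the two-parameter limit ($\eps\to0$ then $\Lambda\to\infty$) is handled correctly. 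The auxiliary fact $\mathrm{Cap}_\Omega(K_\eps,g_i)\to0$ is established via a sound cut-off construction: since $\mathrm{Cap}_\Omega(K)=0$, one can take cut-offs $\psi_n$ with $\|\nabla\psi_n\|_{L^2}\to0$, force them to have support in a fixed $U\Subset\Omega$ and values in $[0,1]$ by multiplying by a fixed bump and truncating (Poincar\'e then gives $\|\psi_n\|_{L^2}\to0$), and use $\psi_n g_i$ as a competitor, which is admissible once $\eps$ is small enough by the concentration property.

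This is a genuinely different route from the one I believe underlies \cite[Lemma~A.1]{AbFeHiLe2019}, which argues by Rellich compactness and contradiction: if the ratio $\|V_\eps\|_{L^2}^2/\mathrm{Cap}_\Omega(K_\eps,f)$ did not vanish along some sequence, the normalized potentials $W_n\equiv V_{\eps_n}/\|\nabla V_{\eps_n}\|_{L^2}$ would admit an $L^2$-strong, $H^1_0$-weak limit $W$ with $\|W\|_{L^2}>0$; but passing the $q_\Omega$-orthogonality $W_n\perp H^1_0(\Omega_{\eps_n})$ to the limit gives $W\perp_{q_\Omega}H^1_0(\Omega\setminus K)=H^1_0(\Omega)$ (here $\mathrm{Cap}_\Omega(K)=0$ is used), so $W=0$, a contradiction. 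Your spectral-decomposition argument trades the compactness/contradiction machinery for explicit Parseval and Cauchy--Schwarz estimates, at the cost of having to prove the qualitative statement $\mathrm{Cap}_\Omega(K_\eps,g_i)\to0$ directly; that step is elementary, so the trade is reasonable. Both approaches ultimately hinge on $\mathrm{Cap}_\Omega(K)=0$: in the compactness argument it enters through $H^1_0(\Omega\setminus K)=H^1_0(\Omega)$, and in yours through the existence of the low-energy cut-offs.
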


We also establish a series of preparatory lemmas. For $\eps>0$, we denote by $\Pi_\eps$ the linear mapping 
\begin{equation*}
\begin{array}{cccc}
	\Pi_\eps:& H^1_0(\Omega)& \to     & H^1_0(\Omega\setminus K_\eps)\\
			 & u            & \mapsto & u-V_{K_\eps,u}.
\end{array}
\end{equation*}
Note that $\Pi_\eps$ is the $q_\Omega$-orthogonal projection described in Remark \ref{rem:geom}.
We recall the quantity $\chi_\eps,$ defined in the statement of Theorem \ref{thm:approxEVs}:
\begin{equation}
\label{eq:chiEps}
	\chi_\eps^2\equiv\sup\{\mathrm{Cap}_\Omega(K_\eps,u)\,:\,u\in E(\lambda_N)\mbox{ and } \|u\|=1\}
\end{equation}
To simplify notation, we denote by $V_{u}^\eps$ the potential $V_{K_\eps,u}$ for all $u\in H_0^1(\Omega)$.

\begin{lem} \label{l:error} As $\eps\to0$, $\chi_\eps\to0$.
\end{lem}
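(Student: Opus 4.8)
The statement is that $\chi_\eps \to 0$ as $\eps \to 0^+$, where $\chi_\eps^2 = \sup\{\mathrm{Cap}_\Omega(K_\eps,u) : u \in E(\lambda_N),\ \|u\|=1\}$. The key observation is that $E(\lambda_N)$ is finite-dimensional (its dimension is the multiplicity $m$), so the supremum defining $\chi_\eps^2$ is actually a maximum over a compact set, namely the unit sphere of $E(\lambda_N)$. The plan is therefore to show that the map $u \mapsto \mathrm{Cap}_\Omega(K_\eps, u)$ tends to $0$ pointwise on $E(\lambda_N)$ and, because we are on a finite-dimensional space, promote this to uniform convergence on the unit sphere.

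First I would fix an arbitrary $u \in E(\lambda_N)$ with $\|u\|=1$. Recall from Remark~\ref{rem:geom} that $\mathrm{Cap}_\Omega(K_\eps,u) = q_\Omega(V_{K_\eps,u})$ is the square of the $q_\Omega$-distance from $u$ to the closed subspace $H^1_0(\Omega\setminus K_\eps)$ of $H^1_0(\Omega)$. Since $\mathrm{Cap}_\Omega(K)=0$ (here $K$ is the compact set to which $\{K_\eps\}$ concentrates, which in our applications is the single point $\{0\}$) and since $u$ is an eigenfunction, one has $u \in H^1_0(\Omega\setminus K)$ (a set of zero capacity is negligible for $H^1_0$); more to the point, $u$ can be approximated in $H^1_0(\Omega)$-norm by functions $\varphi \in C^\infty_c(\Omega\setminus K)$. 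Fix such a $\varphi$ with $q_\Omega(u-\varphi)^{1/2} < \eta$ for a given $\eta > 0$. The support of $\varphi$ is a compact subset of $\Omega \setminus K$, hence there is an open set $U$ with $K \subseteq U \subseteq \Omega$ and $\overline U \cap \mathrm{supp}\,\varphi = \emptyset$. By the definition of concentration, for $\eps$ small enough $K_\eps \subseteq U$, so $\varphi \in C^\infty_c(\Omega\setminus K_\eps) \subseteq H^1_0(\Omega\setminus K_\eps)$. Consequently $\mathrm{Cap}_\Omega(K_\eps,u) \le q_\Omega(u-\varphi) < \eta^2$ for all such $\eps$, which gives $\mathrm{Cap}_\Omega(K_\eps,u) \to 0$.

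To get the uniform bound I would pick an orthonormal basis $u_1,\dots,u_m$ of $E(\lambda_N)$. For $u = \sum_{i=1}^m a_i u_i$ with $\sum a_i^2 = 1$, the subadditivity-type inequality for the $u$-capacity — which follows from the fact that $u \mapsto V_{K_\eps,u}$ is linear, so that $V_{K_\eps,u} = \sum_i a_i V_{K_\eps,u_i}$, combined with the triangle inequality for the $q_\Omega$-norm and Cauchy–Schwarz — yields
\[
\mathrm{Cap}_\Omega(K_\eps,u)^{1/2} = q_\Omega\Big(\textstyle\sum_i a_i V_{K_\eps,u_i}\Big)^{1/2} \le \sum_{i=1}^m |a_i|\, \mathrm{Cap}_\Omega(K_\eps,u_i)^{1/2} \le \sqrt{m}\,\max_{1\le i\le m}\mathrm{Cap}_\Omega(K_\eps,u_i)^{1/2}.
\]
Taking the supremum over the unit sphere of $E(\lambda_N)$ gives $\chi_\eps^2 \le m\,\max_i \mathrm{Cap}_\Omega(K_\eps,u_i)$, and since each of the finitely many terms on the right tends to $0$ by the previous paragraph, $\chi_\eps \to 0$.

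The only mildly delicate point is the density argument in the second paragraph: one must justify that an eigenfunction $u \in H^1_0(\Omega)$ lies in the $q_\Omega$-closure of $C^\infty_c(\Omega\setminus K)$ when $\mathrm{Cap}_\Omega(K)=0$. This is the standard fact that sets of zero capacity are removable for $H^1_0$ (see, e.g., Rauch and Taylor \cite{RaTa75}), and it is already implicit in Proposition~\ref{prop:convEV}; alternatively one can simply invoke Proposition~\ref{prop:convEV} together with $\lambda_i(\Omega\setminus K)=\lambda_i(\Omega)$ to see that $H^1_0(\Omega\setminus K)=H^1_0(\Omega)$ as far as the relevant spectral data are concerned, so that $\mathrm{Cap}_\Omega(K,u)=0$ for every $u\in H^1_0(\Omega)$, and then the concentration of $\{K_\eps\}$ to $K$ does the rest. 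Everything else is routine.
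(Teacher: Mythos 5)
Your proof is correct and takes essentially the same approach as the paper: both reduce the supremum over the unit sphere to the finitely many basis capacities $\mathrm{Cap}_\Omega(K_\eps,u_{N+i-1})$ via the Cauchy--Schwarz/triangle-inequality bound $\chi_\eps^2\le m\max_i\mathrm{Cap}_\Omega(K_\eps,u_{N+i-1})$, and then conclude from the vanishing of each of these. The only difference is that you spell out why $\mathrm{Cap}_\Omega(K_\eps,u)\to0$ for each fixed $u$ (via density of $C^\infty_c(\Omega\setminus K)$ and the definition of concentration), whereas the paper simply invokes this fact as known; your added detail is correct and harmless.
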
 

\begin{proof} Let us pick $u\in E(\lambda_N)$ such that $\|u\|=1$. We write $u=\sum_{i=1}^m c_i u_{N+i-1}$. Then
\begin{align*}
	\mathrm{Cap}_\Omega(K_\eps,u)&=\left|\sum_{1\le i,j\le m}c_ic_j\int_\Omega \nabla V_{u_{N+i-1}}^{\eps}\cdot \nabla V_{u_{N+j-1}}^{\eps}\, dx\right|\\
	&\le \sum_{1\le i,j\le m}|c_i||c_j|\left(\int_\Omega |\nabla V_{u_{N+i-1}}^{ \eps }|^2\, dx\right)^{\frac12}\left(\int_\Omega |\nabla V_{u_{N+j-1}}^{ \eps }|^2\, dx\right)^{\frac12}\\
	&=\left(\sum_{i=1}^m |c_i|\left(\int_\Omega |\nabla V_{u_{N+i-1}}^{ \eps }|^2\, dx\right)^{\frac12}\right)^2\\
	\le &m\,\left(\max_{1\le i\le m}\int_\Omega |\nabla V_{u_{N+i-1}}^{ \eps }|^2\, dx\right)\sum_{i=1}^m c_i^2=m\max_{1\le i\le m}\mathrm{Cap}_\Omega(K_\eps,u_{N+i-1}).
\end{align*}
Since $\mathrm{Cap}_\Omega(K_\eps,u_{N+i-1})\to0$ for all $1\le i\le m$, the result follows.
\end{proof}

\begin{lem} \label{l:norm} As $\eps\to0$, $M_\eps\equiv\left\|\mathbb I-\Pi_\eps\right\|_{{\mathcal L(E(\lambda_N),L^2(\Omega))}}=o(\chi_\eps)$.
\end{lem}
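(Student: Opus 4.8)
The plan is to reduce the operator-norm bound to the pointwise capacitary estimate of Lemma~\ref{l:a1}, using that $E(\lambda_N)$ is finite-dimensional and that the map $u\mapsto V_u^\eps$ is linear. First I would record that for every $u\in H^1_0(\Omega)$ one has $(\mathbb I-\Pi_\eps)u=u-(u-V_{K_\eps,u})=V_u^\eps$, and that $u\mapsto V_u^\eps$ is linear, being $\mathbb I$ minus the $q_\Omega$-orthogonal projection onto $H^1_0(\Omega\setminus K_\eps)$ (see Remark~\ref{rem:geom}). Consequently
\[
M_\eps=\sup\bigl\{\|V_u^\eps\|\ :\ u\in E(\lambda_N),\ \|u\|=1\bigr\},
\]
where $\|\cdot\|$ denotes the $L^2(\Omega)$-norm.

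Next I would fix an $L^2$-orthonormal basis $\{u_{N+i-1}\}_{i=1}^m$ of $E(\lambda_N)$. For each $i$, Lemma~\ref{l:a1} applied to $f=u_{N+i-1}$ gives $\|V_{u_{N+i-1}}^\eps\|^2=o\bigl(\mathrm{Cap}_\Omega(K_\eps,u_{N+i-1})\bigr)$ as $\eps\to0$; since $\|u_{N+i-1}\|=1$, the definition \eqref{eq:chiEps} of $\chi_\eps$ yields $\mathrm{Cap}_\Omega(K_\eps,u_{N+i-1})\le\chi_\eps^2$, hence $\|V_{u_{N+i-1}}^\eps\|^2=o(\chi_\eps^2)$. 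Then, for an arbitrary $u=\sum_{i=1}^m c_iu_{N+i-1}\in E(\lambda_N)$ with $\|u\|=1$ (so that $\sum_i c_i^2=1$), linearity of $u\mapsto V_u^\eps$ together with the Cauchy--Schwarz inequality gives
\[
\|V_u^\eps\|\le\sum_{i=1}^m|c_i|\,\|V_{u_{N+i-1}}^\eps\|\le\Bigl(\sum_{i=1}^m c_i^2\Bigr)^{1/2}\Bigl(\sum_{i=1}^m\|V_{u_{N+i-1}}^\eps\|^2\Bigr)^{1/2}=\Bigl(\sum_{i=1}^m\|V_{u_{N+i-1}}^\eps\|^2\Bigr)^{1/2}.
\]
Taking the supremum over such $u$ yields $M_\eps^2\le\sum_{i=1}^m\|V_{u_{N+i-1}}^\eps\|^2$, a finite sum of terms each $o(\chi_\eps^2)$; hence $M_\eps^2=o(\chi_\eps^2)$ and $M_\eps=o(\chi_\eps)$, as claimed.

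The argument is essentially routine once the linearity of $u\mapsto V_u^\eps$ is in place; the only delicate point is upgrading the pointwise little-$o$ of Lemma~\ref{l:a1} (whose implied rate a priori depends on the function) to a bound that is uniform over the unit sphere of $E(\lambda_N)$. This is precisely where finite-dimensionality enters: a maximum of finitely many $o(\chi_\eps^2)$ quantities is again $o(\chi_\eps^2)$, and the basis-expansion estimate above transfers this uniformity from the basis vectors to all normalized elements of $E(\lambda_N)$.
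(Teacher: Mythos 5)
Your proposal is correct and follows essentially the same route as the paper: identify $(\mathbb I-\Pi_\eps)u=V_u^\eps$, expand $u$ in the orthonormal basis, apply the triangle and Cauchy--Schwarz inequalities, and invoke Lemma~\ref{l:a1} together with the bound $\mathrm{Cap}_\Omega(K_\eps,u_{N+i-1})\le\chi_\eps^2$ to conclude. The only cosmetic difference is that the paper factors out $\chi_\eps$ and a $\max$ of ratios $\|V_{N+i-1}^\eps\|/\mathrm{Cap}_\Omega(K_\eps,u_{N+i-1})^{1/2}$, whereas you bound each summand directly by $o(\chi_\eps^2)$; the two arguments are equivalent.
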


\begin{proof} Let $u\in E(\lambda_N)$ such that $\|u\|=1$. By definition, we have $(\mathbb I-\Pi_\eps)u=V_u^\eps$. We find
\begin{align*}
 	\|V_u^\eps\|&\le \sum_{i=1}^m |c_i|\|V_{N+i-1}^{ \eps }\| \le \left(\sum_{i=1}^m c_i^2\right)^{\frac12}\left(\sum_{i=1}^m \|V_{N+i-1}^{ \eps }\|^2\right)^{\frac12}\\
 	&=\left(\sum_{i=1}^m \mathrm{Cap}_\Omega(K_\eps,u_{N+i-1})\frac{\|V_{N+i-1}^{ \eps }\|^2}{\mathrm{Cap}_\Omega(K_\eps,u_{N+i-1})}\right)^{\frac12}\\ &\le \sqrt{m}\,\chi_\eps\,\max_{1\le i\le m}\frac{\|V_{N+i-1}^{ \eps }\|}{\mathrm{Cap}_\Omega(K_\eps,u_{N+i-1})^{(1/2)}}.
\end{align*}
The last term is $o(\chi_\eps)$ according to Lemma \ref{l:a1}.
\end{proof}

Lemma \ref{l:norm} implies in particular that $M_\eps<1$,  so that the restriction of $\Pi_\eps$ to $E_N$ is injective, for $\eps$ small enough. We will always suppose this to be the case in the rest of this section.

\subsection{Application of the abstract lemma}\label{sec:appLemma}

We are going to apply Proposition \ref{p:appEV} in the following way. For $\eps>0$ small enough, we introduce the following set of definitions \eqref{notfirst}--\eqref{notlast}:
\begin{align}
 &\mathcal H_\eps \equiv L^2(\Omega\setminus K_\eps);\label{notfirst}\\
 &\mathcal D_\eps \equiv H^1_0(\Omega\setminus K_\eps);\\
&q_\eps(u)\equiv \int_{\Omega\setminus K_\eps} |\nabla u|^2\, dx-\lambda_N\int_{\Omega\setminus K_\eps} u^2\, dx\text{ for all }u\in\mathcal D_\eps;\\
&F_{\eps}\equiv \Pi_\eps(E(\lambda_N)). \label{notlast}
\end{align}

By construction, the eigenvalues of $q_\eps$ are $\{\lambda_i^\eps-\lambda_N\}_{i\ge1}$. We use the notation $\nu_i^\eps\equiv \lambda_i^\eps-\lambda_N$. Since $\lambda_i^\eps\to\lambda_i$  for all $i\in\N^*$ and since $\lambda_N$ is of multiplicity $m$, the assumption (H2) is fulfilled for $\eps>0$ small enough if we take, for instance,
\begin{equation*}
	\gamma\equiv{\frac12}\min\{\lambda_N-\lambda_{N-1},\lambda_{N+m}-\lambda_{N+m-1}\}
\end{equation*}
when $N\ge2$ and, when $N=1$ (in which case $m=1$),
\begin{equation*}
	\gamma\equiv{ \frac12}
	\left(\lambda_{2}-\lambda_{1}\right).
\end{equation*}

It remains to check whether condition (H3) is satisfied. Let us choose $v\in F_\eps$ and $w\in \mathcal D_\eps$. There exists a unique $u\in E(\lambda_N)$ such that $v=\Pi_\eps u$. We have
\begin{align*} 
q_\eps(v,w)&=q_\Omega(v,w)-\lambda_N\langle v,w\rangle\\
		   &=q_\Omega(u,w)-\lambda_N\langle u,w\rangle-q_\Omega(V_u^\eps,w)+\lambda_N\langle V_u^\eps,w\rangle\\
		   &=\lambda_N\langle V_u^\eps,w\rangle.
\end{align*}
We have used the facts that $u$ is an eigenfunction of $q_\Omega$ and that $V_u^\eps$ is $q_\Omega$-orthogonal to $\mathcal D_\eps$. We obtain
\begin{equation*}
	|q_\eps(v,w)|\le\lambda_N\|V_u^\eps\|\|w\| \le \lambda_N M_\eps \|u\|\|w\|\le\lambda_N\frac{M_\eps}{1-M_\eps}\|v\|\|w\|.
\end{equation*}
Lemma \ref{l:norm} then implies
\begin{equation*}
	|q_\eps(v,w)|\le o(\chi_\eps)\|v\|\|w\|.
\end{equation*}

We can now apply Proposition \ref{p:appEV}, which tells us that for $1\le i\le m$,
\begin{equation*}
	\lambda_{N+i-1}^\eps=\lambda_N+\xi_i^\eps +o(\chi_\eps^2),
\end{equation*}
where $\{ \xi_i^\eps \}_{i=1}^m$ are the eigenvalues of the restriction of $q_\eps$ to $F_\eps$. 
 
\subsection{Analysis of the restricted quadratic form. Proof of Theorem \ref{thm:approxEVs}}

In  order to give a complete proof of Theorem \ref{thm:approxEVs}, it remains to show that for $1\le i\le m$,
\begin{equation*}
	|\mu_i^\eps- \xi_i^\eps |=o(\chi_\eps^2).
\end{equation*}
Let us recall that $\{\mu_i^\eps\}_{i=1}^m$ are the eigenvalues of the quadratic form $r_\eps$ defined on $E(\lambda_N)$ by
\begin{equation*}				
	r_\eps(u,v) \equiv q_\Omega(V_u^\eps,V_v^\eps)-\lambda_N\langle V_u^\eps,V_v^\eps\rangle.
\end{equation*}
Note that, from Lemma \ref{l:a1}, $\mu_i^\eps=O\left(\chi_\eps^2\right)$ as $\eps\to0$.
\begin{lem} \label{l:restrict}
For all $u,v\in E(\lambda_N)$, 
\begin{equation*}
	q_\eps\left(\Pi_\eps u,\Pi_\eps v\right)=r_\eps(u,v).
\end{equation*}
\end{lem}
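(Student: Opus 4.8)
The plan is to prove the identity by expanding both sides bilinearly and using two structural facts already available in the excerpt: first, that every $u\in E(\lambda_N)$ satisfies the weak eigenvalue equation $q_\Omega(u,\varphi)=\lambda_N\langle u,\varphi\rangle$ for all $\varphi\in H^1_0(\Omega)$; and second, that by Definition \ref{def:ucap} the potential $V_u^\eps$ satisfies $u-V_u^\eps\in\mathcal D_\eps=H^1_0(\Omega\setminus K_\eps)$, so that $\Pi_\eps u=u-V_u^\eps$ and $\Pi_\eps v=v-V_v^\eps$ indeed lie in $\mathcal D_\eps$ and $q_\eps(\Pi_\eps u,\Pi_\eps v)$ is well defined. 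Since these functions vanish on $K_\eps$, the extension-by-zero identification recalled in the Remark following Definition \ref{def:ucap} lets us compute every pairing over $\Omega$; in particular $q_\eps(\Pi_\eps u,\Pi_\eps v)=q_\Omega(\Pi_\eps u,\Pi_\eps v)-\lambda_N\langle \Pi_\eps u,\Pi_\eps v\rangle$.

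First I would substitute $\Pi_\eps u=u-V_u^\eps$ and $\Pi_\eps v=v-V_v^\eps$ and expand into four brackets:
\begin{align*}
q_\eps(\Pi_\eps u,\Pi_\eps v)=\;&\bigl(q_\Omega(u,v)-\lambda_N\langle u,v\rangle\bigr)-\bigl(q_\Omega(u,V_v^\eps)-\lambda_N\langle u,V_v^\eps\rangle\bigr)\\
&-\bigl(q_\Omega(V_u^\eps,v)-\lambda_N\langle V_u^\eps,v\rangle\bigr)+\bigl(q_\Omega(V_u^\eps,V_v^\eps)-\lambda_N\langle V_u^\eps,V_v^\eps\rangle\bigr).
\end{align*}
Then I would eliminate the first three brackets one by one using the weak eigenvalue equation: the first vanishes since $q_\Omega(u,v)=\lambda_N\langle u,v\rangle$; the second vanishes by testing the equation for $u$ against $\varphi=V_v^\eps\in H^1_0(\Omega)$; the third vanishes by testing the equation for $v$ against $\varphi=V_u^\eps$. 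What survives is exactly $q_\Omega(V_u^\eps,V_v^\eps)-\lambda_N\langle V_u^\eps,V_v^\eps\rangle$, which is the definition of $r_\eps(u,v)$, and the lemma follows.

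I do not expect a serious obstacle here: the $q_\Omega$-orthogonality of the potentials to $\mathcal D_\eps$ is used only to guarantee that $q_\eps(\Pi_\eps u,\Pi_\eps v)$ makes sense, not for the identity itself, and the computation is otherwise a one-line consequence of the eigenvalue equation. The single point deserving care — the nearest thing to an obstacle — is the bookkeeping of ambient spaces: one must note that, although $K_\eps=\eps\overline\omega$ has positive Lebesgue measure, functions in $H^1_0(\Omega\setminus K_\eps)$ extended by zero agree with their restrictions on all the relevant $L^2$ and Dirichlet pairings, so that $q_\eps$ is genuinely the restriction to $\mathcal D_\eps$ of $w\mapsto q_\Omega(w)-\lambda_N\|w\|^2$ and the bracket manipulation above is legitimate.
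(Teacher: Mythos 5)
Your proof is correct. Both you and the paper start by extending $q_\eps$ to $q_\Omega-\lambda_N\langle\cdot,\cdot\rangle$ via the zero-extension identification and then expand $q_\eps(u-V_u^\eps,v-V_v^\eps)$, so the overall scaffolding is the same. The difference is in which structural facts carry the computation. The paper cancels the cross terms in two stages, first invoking the $q_\Omega$-orthogonality of $V_u^\eps$ to $\mathcal D_\eps$ (to drop $q_\Omega(V_u^\eps,\,v-V_v^\eps)$), then the weak eigenvalue identity for $u$, and finally the eigenvalue identity for $v$ together with orthogonality once more to convert $\lambda_N\langle V_u^\eps,v\rangle$ into $q_\Omega(V_u^\eps,V_v^\eps)$. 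You instead expand into the four brackets $q_\Omega-\lambda_N\langle\cdot,\cdot\rangle$ and kill the first three purely by the weak eigenvalue equation $q_\Omega(w,\varphi)=\lambda_N\langle w,\varphi\rangle$ for $w\in E(\lambda_N)$ and arbitrary $\varphi\in H^1_0(\Omega)$, using $\varphi=v$, $\varphi=V_v^\eps$, $\varphi=V_u^\eps$ in turn, without ever appealing to the orthogonality of the potentials to $\mathcal D_\eps$. This is a genuine, if modest, simplification: it shows that the identity $q_\eps(\Pi_\eps u,\Pi_\eps v)=r_\eps(u,v)$ needs only the eigenfunction property of $u$ and $v$, with the variational/orthogonality characterization of $V_u^\eps$ playing no role beyond ensuring $\Pi_\eps u\in\mathcal D_\eps$. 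Your bookkeeping about the extension by zero and the legitimacy of computing all pairings over $\Omega$ is also correctly handled.
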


\begin{proof} We have
 \begin{equation*}
 	q_\eps\left(\Pi_\eps u,\Pi_\eps v\right)=q_\Omega(u-V_u^\eps,v-V_v^\eps)-\lambda_N\langle u-V_u^\eps,v-V_v^\eps \rangle.
 \end{equation*}
 Since $V_u^\eps$ is $q_\Omega$-orthogonal to $\mathcal D_\eps$,
 \begin{equation*}
 	q_\eps\left(\Pi_\eps u,\Pi_\eps v\right)=q_\Omega(u,v-V_v^\eps)-\lambda_N\langle u,v-V_v^\eps \rangle+\lambda_N\langle V_u^\eps,v-V_v^\eps \rangle
 \end{equation*}
 and since $u$ is an eigenfunction of $q_\Omega$ associated with
 $\lambda_N$,
 \begin{equation*}
 	q_\eps\left(\Pi_\eps u,\Pi_\eps v\right)=\lambda_N\langle V_u^\eps,v \rangle-\lambda_N\langle V_u^\eps,V_v^\eps \rangle.
 \end{equation*} 
 We finally use the fact that $v$ is an eigenfunction associated with $\lambda_N$ and that $V_v^\eps$ is the projection of $v$ on the $q_\Omega$-orthogonal complement of $\mathcal D_\eps$:
  \begin{equation*}
 	q_\eps\left(\Pi_\eps u,\Pi_\eps v\right)=q_\Omega(V_u^\eps,v)-\lambda_N\langle V_u^\eps,V_v^\eps \rangle=q_\Omega(V_u^\eps,V_v^\eps)-\lambda_N\langle V_u^\eps,V_v^\eps \rangle=r_\eps(u,v).\qedhere
 \end{equation*} 
\end{proof}

Let us now introduce the notation $v_i^\eps\equiv \Pi_\eps u_{N+i-1}$. Since $\{u_{N+i-1}\}_{1\le i\le m}$ is an orthonormal basis of $E(\lambda_N)$ and $\Pi_\eps:E(\lambda_N)\to F_\eps$ is bijective, $\{v_i^\eps\}_{i=1}^m$ is a basis of $F_\eps$.  According to Lemma \ref{l:restrict}, $q_\eps(v_i^\eps,v_j^\eps)=r_\eps(u_{N+i-1},v_{N+j-1})$. Let us define the $m\times m$ matrix $A_\eps\equiv[q_\eps(v_i^\eps,v_j^\eps)]_{1\le i,j\le m}$.  We have just seen that $A_\eps$ is the matrix of the quadratic form $r_\eps$ in the orthonormal basis $\{u_{N+i-1}\}_{1\le i\le m}$. Its eigenvalues are therefore $\{\mu_i^\eps\}_{i=1}^m$. 

On the other hand, $A_\eps$ is the matrix of the quadratic form $q_\eps$, restricted to $F_\eps$, in the basis $\{v_i^\eps\}_{i=1}^m$. It follows from Lemma \ref{l:a1} that $v_i^\eps\to u_{N+i-1}$ in $L^2(\Omega)$ as $\eps\to0$, and this means that the basis $\{v_i^\eps\}_{i=1}^m$ is approximately orthonormal. More precisely, if we define the matrix $C_\eps\equiv[\langle v_i^\eps,v_j^\eps\rangle]_{1\le i,j\le m}$, we have that $C_\eps=\mathbb I+E_\eps$, with $\lim_{\eps\to0}E_\eps=0$.  As detailed in Appendix \ref{app:appEVs}, this fact and the estimate $\mu_i^\eps=O\left(\chi_\eps^2\right)$ imply that 
\begin{equation*}
	 \xi_i^\eps =\mu_i^\eps+o(\chi_\eps^2).
\end{equation*}

\section{Connection with the order of vanishing}

In this section, we again denote by
\[E(\lambda_N)=E_1\oplus\dots\oplus E_p\] 
the order decomposition of the eigenspace $E(\lambda_N)$ (see Proposition \ref{prop:DecompES}),  with
\[k_1>\dots>k_p\ge0\]
the associated finite sequence of orders. Our goal is to prove Theorem \ref{thm:orderEVs}. Let us note that our formulation of this theorem does not make any reference to a particular basis of $E(\lambda_N)$. Up to a change of basis, we can therefore assume, in the course of the proof, that  the orthonormal basis $\{u_{N+i-1}\}_{i=1}^m$ has a form which is convenient for our computations. The final result will not depend on this choice of basis. More precisely, we can assume that the orthonormal basis $\{u_{N+i-1}\}_{i=1}^m$ agrees with the order decomposition and diagonalizes each of the quadratic forms $\mathcal Q_j$. Explicitly, this means that, for all $j\in\{1,\dots,p\}$, 
\begin{equation*}
	E_j=\mbox{span}\{u_{N+m_1+\dots+m_{j-1}},\dots,u_{N+m_1+\dots+m_{j-1}+m_j-1}\}
\end{equation*}
and, for all $1\le s<t\le  m_j$,
\begin{equation*}
\mathcal Q_j(u_{N+m_1+\dots+m_{j-1}+s-1},u_{N+m_1+\dots+m_{j-1}+t-1})=0.
\end{equation*}
It follows that, for all $1\le s\le  m_j$,
\begin{equation*}
\mathcal Q_j(u_{N+m_1+\dots+m_{j-1}+s-1},u_{N+m_1+\dots+m_{j-1}+s-1})=\mu_{j,s}.
\end{equation*}

As a first step in the proof of Theorem \ref{thm:orderEVs}, let us find the asymptotic behavior of those eigenvalues $ \nu_i^\eps\equiv \lambda_i^\eps-\lambda_N$  that go most slowly to $0$, that is to say the eigenvalues
\begin{equation*}
	\nu^\eps_{N+m_1+\dots+m_{p-1}},\dots,\nu^\eps_{N+m-1}.	
\end{equation*}
According to Theorem \ref{thm:approxEVs}, we have to find the $m_p$ largest eigenvalues (as $\eps\to0$) of $A_\eps$, the matrix of the quadratic form $r_\eps$ in the basis $\{u_{N+i-1}\}_{i=1}^m$. It follows from Lemma \ref{l:a1} and Corollary \ref{cor:asymptCap} that 
\begin{equation*}
A_\eps=
\left(
\begin{array}{cccccc}
	& &&						&       &						   \\
	&0&&      					&0      &						   \\
	& &&						& 		&						   \\
	& &&\mu_{p,1}\,\rho_{k_p}^\eps&	    &						   \\
	&0&&						&\ddots &						   \\
	& &&						& 		&\mu_{p,m_p}\,\rho_{k_p}^\eps
\end{array}
\right)
+o\left(\rho_{k_p}^\eps\right),
\end{equation*} 
with the functions $\{\rho_k^\eps\}$ defined by Equation \eqref{eq:scale}. Using the min-max characterization of eigenvalues, we conclude that, for $1\le i\le m_p$,
\begin{equation*}
	\mu^\eps_{m_1+\dots+m_{p-1}+i}=\mu_{p,i}\,\rho_{k_p}^\eps+o\left(\rho_{k_p}^\eps\right)
\end{equation*}
and, for $1\le i\le m_1+\dots+m_{p-1}$,
\begin{equation*}
	\mu^\eps_{i}=o\left(\rho_{k_p}^\eps\right).
\end{equation*}
Theorem \ref{thm:approxEVs}, and the fact that $\chi_\eps^2$ and $\rho_{k_p}^\eps$ are of the same order, tell us that the same estimates hold for   the $\nu_i^\eps$'s:  for $1\le i\le m_p$,
\begin{equation*}
	\nu^\eps_{N-1+m_1+\dots+m_{p-1}+i}=\mu_{p,i}\rho_{k_p}^\eps+o\left(\rho_{k_p}^\eps\right)
\end{equation*}
and, for $1\le i\le m_1+\dots+m_{p-1}$,
\begin{equation*}
	\nu^\eps_{N-1+i}=o\left(\rho_{k_p}^\eps\right).
\end{equation*}

The rest of the proof consists of a step-by-step procedure, in which we rescale the quadratic form $q_\eps$ and apply the same arguments in order to identify successive groups of eigenvalue converging to $\lambda_N$ at the same rate. Let us sketch the next step. We set, for $u,v\in\mathcal D_\eps$,
\begin{equation*}
	q_{p-1}^\eps(u,v)\equiv\frac1{\rho_{k_p}^\eps}q_\eps(u,v),
\end{equation*}
and we define the subspace
\begin{equation*}
	F_{p-1}^\eps=\Pi_\eps(E_1\oplus\dots\oplus E_{p-1}).
\end{equation*}

The eigenvalues of $q^\eps_{p-1}$ are $\left\{\frac1{\rho_{k_p}^\eps}\nu_i^\eps\right\}_{i\ge1}$. We know from the first step that, for $1\le i\le m_p$,
\begin{equation*}
	\lim_{\eps\to0}\frac1{\rho_{k_p}^\eps}\nu^\eps_{N-1+m_1+\dots+m_{p-1}+i}=\mu_{p,i}>0.
\end{equation*}
It follows immediately that there exists $\gamma>0$ such that, for $\eps>0$ small enough,
\begin{equation*}
  \left|\frac1{\rho_{k_p}^\eps}\nu_{N-1+i}^\eps\right|\le\gamma \mbox{ for } 1\le i\le m_1+\dots+m_{p-1};
\end{equation*}
\begin{equation*}
	\frac1{\rho_{k_p}^\eps}\nu_{N+m_1+\dots+m_{p-1}}^\eps\ge2\gamma;
\end{equation*}
and, in case $N\ge2$,
\begin{equation*}
	\frac1{\rho_{k_p}^\eps}\nu_{N-1}^{\eps}\le -2\gamma.
\end{equation*}
Repeating the arguments of Section \ref{sec:appLemma}, we can show that for all $v\in F_{p-1}^\eps$ and $w\in \mathcal D_\eps$,
\begin{equation*}
	\left|q^\eps_{p-1}(v,w)\right|\le o\left(\left(\frac{\rho_{k_{p-1}}^\eps}{\rho_{k_p}^\eps}\right)^{1/2}\right) \|v\|\|w\|.
\end{equation*}
Using the arguments in the proof of Theorem \ref{thm:approxEVs} and in the first step, we conclude that, for $1\le i\le m_{p-1}$,
\begin{equation*}
	\frac1{\rho_{k_p}^\eps}\nu^\eps_{N-1+m_1+\dots+m_{p-2}+i}=\mu_{p-1,i}\,\frac{\rho_{k_{p-1}}^\eps}{\rho_{k_p}^\eps}+o\left(\frac{\rho_{k_{p-1}}^\eps}{\rho_{k_p}^\eps}\right)
\end{equation*}
and, for $1\le i\le m_1+\dots+m_{p-2}$,
\begin{equation*}
	\frac1{\rho_{k_p}^\eps}\nu^\eps_{N-1+i}=o\left(\frac{\rho_{k_{p-1}}^\eps}{\rho_{k_p}^\eps}\right).
\end{equation*}
This gives us finally, for $1\le i\le m_{p-1}$,
\begin{equation*}
	\nu^\eps_{N-1+m_1+\dots+m_{p-2}+i}=\mu_{p-1,i}\,\rho_{k_{p-1}}^\eps+o\left(\rho_{k_{p-1}}^\eps\right)
\end{equation*}
and, for $1\le i\le m_1+\dots+m_{p-2}$,
\begin{equation*}
	\nu^\eps_{N-1+i}=o\left(\rho_{k_{p-1}}^\eps\right).
\end{equation*}

Carrying on the procedure for $j$ decreasing from $p-2$ to $1$, we obtain Theorem \ref{thm:orderEVs}.

\section{Same vanishing order:  the case of elliptic holes}

In this section we provide  further applications of the results established above. Let us consider the dimension  to be ${d}=2$ and the multiplicity of $\lambda_{N}$ to be $m=2$. We recall that  $\Omega$ and $\omega$ are open bounded connected subsets of $\mathbb{R}^{2}$ satisfying assumption \eqref{e1}. Moreover,
\[
\Omega_\varepsilon\equiv\Omega\setminus K_\eps
\quad \text{where}\quad K_\eps\equiv \eps\overline\omega
\qquad\quad\forall\varepsilon\in]-\varepsilon_0,\varepsilon_0[\,.
\]

Moreover, 
in this subsection we assume that 
\begin{align}\label{eq:samevanishing}
  &\text{all non-zero functions in } E(\lambda_N) \text{ have the same order of vanishing }k.
\end{align}

We will compute explicitly the $2\times2$ matrix of Corollary \ref{cor:doubleEV} in the case of  elliptic holes and then analyze the special case of a circular hole to prove Corollary \ref{c:1}. To consider the case of elliptic holes, we refer to \cite[Section 9]{AbBoLeMu}.
Let $a>b>0$. We consider the ellipse $\mathcal E_0$ defined as
\[
 \mathcal E_0(a,b)  \equiv \left\{(x,y)\in\mathbb R^2,  \frac{x^2}{a^2}+\frac{y^2}{b^2}<1 \right\} = \Big\{ (x_1,x_2)\in \R^2:\ \frac{x_1^2}{b^2+c^2} + \frac{x_2^2}{b^2} < 1 \Big\},
\]
where $c$ is  the distance between the two foci, which satisfies $c^2=a^2-b^2$. 

For the sake of simplicity and without loss of generality, we perform a change of variables by rotating  the domain,  in such a way that in the new domain, the major axis of the  small elliptic hole is lying along the $x_1$-axis, so that 
\[
\omega = \mathcal E_0(a,b) \label{eq:omegasimnew}.
\]
Let us now choose $\{u_N,u_{N+1}\}$ an orthonarmal basis of $E(\lambda_N)$. According to assumption \eqref{eq:samevanishing}, we have, for $l\in \{N,N+1\}$, 
\begin{equation}
r^{-k} u_l(r\cos t,r\sin t ) \to \beta_l\sin(k t + k\varphi_l) \qquad \text{ as }r\to 0,\label{eq:orderknew}
\end{equation}
uniformly in $t$ for derivatives of all orders, where $\beta_l\in\R\setminus\{0\}$ and $\varphi_l\in ]-\pi/2k,\pi/2k]$.

We apply Corollary \ref{cor:doubleEV}.
According to \cite[Equation (9.12)]{AbBoLeMu}, the diagonal entries of the matrix $M$ representing $\mathcal Q_1$ in the basis  $\{u_N,u_{N+1}\}$
are
\begin{align}
 &M_{1,1} = \dfrac{-\pi\beta_{N+1}^2 c^{2k}}{2}\,C_{k} \cos(2k\varphi_{N+1}) + \pi \beta_{N+1}^2\, Q_{k}(a,b)\label{eq:m11}\\
 &M_{2,2} = \dfrac{-\pi\beta_{N}^2 c^{2k}}{2}\,C_{k} \cos(2k\varphi_{N}) + \pi \beta_{N}^2\, Q_{k}(a,b),\label{eq:m22}
\end{align}
$C_{k}$ and $Q_{k}(a,b)$ being positive constants depending only on $k$ and $k,\ a,\ b$, respectively (see \cite[Section 9]{AbBoLeMu}). 
{We recall that here $u_N$ and $u_{N+1}$ are not assumed to diagonalize $\mathcal Q_1$, so} we need a similar formula for the mixed term, too. Following all steps in \cite[Subsection 9.1]{AbBoLeMu}, we can compute the first contribution. We introduce the function $F: (\xi,\eta)\mapsto (x_1,x_2)$ which changes the variables  into elliptic coordinates by 
\[
\begin{cases}
x_1=c\cosh(\xi)\cos(\eta),\\
x_2=c\sinh(\xi)\sin(\eta),
\end{cases}
\quad \cbk \xi\in [0, +\infty[,\ \eta\in[0,2\pi[.\cbk
\]
$F$ is a $C^\infty$ diffeomorphism from $\cbk D:= [0,+\infty[\times [0,2\pi[$ onto $\R^2$. It is actually a conformal map, as noted in \cite[Subsection 3.2]{AbFeHiLe2019}. For any $l=N,N+1$ let $W^l_{k}= \mathsf{u}^l_{\overline{k}} \circ F $ and $a_{j,l},\ b_{j,l}$ its Fourier coefficient defined as
\begin{align*}
 &a_{j,l}(\xi)= \frac1\pi \int_0^{2\pi} W^l_{k}(\xi,\eta) \cos(j\eta) \,d\eta \quad \text{ for }j\in\N,\\ 
 &b_{j,l}(\xi)= \frac1\pi \int_0^{2\pi} W^l_{k}(\xi,\eta) \sin(j\eta) \,d\eta \quad \text{ for }j\in\N\setminus\{0\}.
\end{align*}
Following the same computations as in \cite[Subsection 9.1]{AbBoLeMu}, we compute
\begin{align*}
 &\int_{\mathbb{R}^2 \setminus \overline{\omega}}\nabla \mathsf{u}^{N+1}_{\overline{k}}\cdot\nabla\mathsf{u}^{N}_{\overline{k} }\, dt 
 = \pi \sum_{j\geq 1} j\left( a_{j,N}(\bar\xi)\,a_{j,N+1}(\bar\xi) + b_{j,N}(\bar\xi)\,b_{j,N+1}(\bar\xi) \right)\\
 &= \sum_{\begin{array}{c}1\le j\le k\\ \cbk k+j\mbox{ even} \cbk\end{array}}\frac{\pi \beta_{N}\beta_{N+1}c^{2k}}{4^{k-1}}j\left(\begin{array}{c}k\\ \frac{k+j}2\end{array}\right)^2\left(\sin(k\varphi_{N})\sin(k\varphi_{N+1})\cosh^2j\bar\xi\right.\\
 &\left.\qquad \qquad \qquad+\cos(k\varphi_{N})\cos(k\varphi_{N+1})\sinh^2j\bar\xi\right) \\
 &= \sum_{\begin{array}{c}1\le j\le k\\ \cbk k+j\mbox{ even} \cbk\end{array}}\frac{\pi \beta_{N}\beta_{N+1}c^{2k}}{4^{k-1}}j\left(\begin{array}{c}k\\ \frac{k+j}2\end{array}\right)^2\left(\sin(k\varphi_{N})\sin(k\varphi_{N+1})\right.\\
 &\left.\qquad \qquad \qquad+\sinh^2j\bar\xi \big(\cos(k\varphi_{N}-k\varphi_{N+1})\big)\right)\\
 &= \sum_{\begin{array}{c}1\le j\le k\\ \cbk k+j\mbox{ even} \cbk\end{array}}\frac{\pi \beta_{N}\beta_{N+1}c^{2k}}{4^{k-1}}j\left(\begin{array}{c}k\\ \frac{k+j}2\end{array}\right)^2\left(\frac12\cosh 2j\bar\xi \cos(k\varphi_{N}-k\varphi_{N+1})\right.\\
 &\left.\qquad \qquad \qquad - \frac12 \cos(k\varphi_{N}+k\varphi_{N+1}) \right)\\
 &= -\frac{\pi\beta^2 c^{2k}}2C_{k}\cos (k\varphi_{N}+k\varphi_{N})+\pi \beta^2 c^{2{k}}D_{k}(\bar\xi)\cos (k\varphi_{N}-k\varphi_{N})
\end{align*}
where the third equality follows from adding and subtracting the same quantity $\sin(k\varphi_{N})\sin(k\varphi_{N+1})\sinh^2j\bar\xi$ and factorizing; the fourth equality follows from Werner's formula and identity 
\[ \sinh^2 j\bar\xi = \frac12 \cosh 2j\bar\xi - \frac12;\]
the last one follows from analogous steps as in \cite[Subsection 9.1]{AbBoLeMu}.

More easily, following \cite[Subsection 9.2]{AbBoLeMu}, we obtain 
\[
\int_{\omega} \nabla u^{N+1}_{\#,k}\cdot \nabla u^{N}_{\#,k}\, dt
= \dfrac{\pi \beta_{N}\,\beta_{N+1} c^{2k}}{2} \left(\sum_{j=0}^{k} \left(\begin{array}{c}{k}\\ j\end{array}\right)^2 ({k}-2j)\,e^{2({k}-2j)\bar\xi}\right) \cos (k\varphi_{N}-k\varphi_{N})
\]
in place of \cite[Equation (9.10)]{AbBoLeMu}. 

Following \cite[Subsection 9.3]{AbBoLeMu}, we finally obtain
\begin{align*}
 M_{1,2}&= \dfrac{-\pi{\beta_{N}}{\beta_{N+1}} c^{2k}}{2}\,C_{k} \cos(k\varphi_{N}+k\varphi_{N+1})\notag\\
 &\ + \pi {\beta_{N}}{\beta_{N+1}}\, Q_{k}(a,b)\cos(k\varphi_{N}-k\varphi_{N+1}).
\end{align*}

As already mentioned, the two eigenvalues of the symmetric matrix with entries $M_{1,1}, \,M_{1,2}$ and $M_{2,2}$ coincide if and only if $M_{1,1}=M_{2,2}$ and $M_{1,2}=0$. 

Taking advantage of the computations performed above, we are in position to prove Corollary \ref{c:1}.

\begin{proof}
[Proof of Corollary \ref{c:1}]
 As mentioned in \cite[Remark 9.3]{AbBoLeMu}, we can recover the case of round holes letting $b\to a$. In this way we obtain 
 \begin{align*}
  &M_{1,1} = 2 \pi k a^{2k} \beta_{N+1}^2, \qquad 
    M_{2,2} = 2 \pi k a^{2k} \beta_{N}^2
\\
&M_{1,2}=2\pi k a^{2k} {\beta_{N}}{\beta_{N+1}}\, \cos(k\varphi_{N}-k\varphi_{N+1}).
 \end{align*}
If ${\beta_{N}}={\beta_{N+1}}$, then $M_{1,1}=M_{2,2}$, but condition $M_{1,2}=0$ implies
\[
 \varphi_{N} - \varphi_{N+1}= \pm \frac{\pi}{2k}.
\]
\end{proof}
We note that if $k=1$, the condition in Corollary \ref{c:1} means the two eigenfunctions  do not have perpendicular nodal lines.

\appendix

\section{Order decomposition of an eigenspace}\label{app:DecompES}

Let us recall the setting. We consider $E \equiv E(\lambda_N)$, the eigenspace associated with eigenvalue $\lambda=\lambda_N(\Omega)$ of Problem \eqref{eq:eige}   and we denote the multiplicity by $m$, i.e. $m=\mbox{dim}(E)$. We study the behavior of eigenfunctions at a point $x_0\in\Omega$  and we may assume $x_0=0$ without loss of generality.  Our  main goal is to establish the existence and uniqueness of the order decomposition (Proposition \ref{prop:DecompES}). Let us recall the corresponding result.

\begin{prop} There exists a decomposition of $E$ into a sum of  subspaces
\[E=E_1\oplus\dots\oplus E_p,\] 
orthogonal for the scalar product in $L^2(\Omega)$, and an associated finite decreasing sequence of integers
\[k_1>\dots>k_p\ge0\]
such that, for all $1\le j \le p$, a function in $E_j\setminus\{0\}$ has the order of vanishing $k_j$ at $0$. In addition, such a decomposition is unique.
\end{prop}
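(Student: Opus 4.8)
The plan is to construct the decomposition from the filtration of $E$ by order of vanishing at $0$, and to derive uniqueness from the non-cancellation of the leading homogeneous part in a sum of analytic functions of pairwise distinct orders.

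First, I would note that every $u\in E\setminus\{0\}$ has a finite vanishing order $\kappa(u)$: eigenfunctions are real-analytic in $\Omega$ (as recalled above), so unique continuation forbids a nonzero eigenfunction from vanishing to infinite order at $0$. Then, for $k\in\N$, set $F_k\equiv\{u\in E:\ D^\beta u(0)=0\ \text{for all}\ |\beta|<k\}$; this is a linear subspace of $E$, being an intersection of kernels of the linear functionals $u\mapsto D^\beta u(0)$, and it coincides with $\{0\}\cup\{u\in E:\kappa(u)\ge k\}$. This produces a decreasing chain $E=F_0\supseteq F_1\supseteq\cdots$, which stabilizes since $\dim E=m<\infty$, and whose eventual value is $\{0\}$ by the previous remark. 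I would then let $k_1>\cdots>k_p\ge0$ be the integers $k$ at which $F_k\supsetneq F_{k+1}$ (there are at most $m$ of them); equivalently, $\{k_1,\dots,k_p\}=\{\kappa(u):u\in E\setminus\{0\}\}$. Since there is no achieved order strictly between two consecutive $k_j$'s, $F_k$ is constant on each block between jump points, $F_{k_p}=E$ and $F_{k_1+1}=\{0\}$, so the $F_{k_j}$ together with $\{0\}$ form a strict flag $\{0\}\subsetneq F_{k_1}\subsetneq F_{k_2}\subsetneq\cdots\subsetneq F_{k_p}=E$.

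For existence, I would set $E_1\equiv F_{k_1}$ and, for $2\le j\le p$, let $E_j$ be the $L^2(\Omega)$-orthogonal complement of $F_{k_{j-1}}$ inside $F_{k_j}$, so that $E=E_1\oplus\cdots\oplus E_p$ is an $L^2$-orthogonal decomposition and $F_{k_j}=E_1\oplus\cdots\oplus E_j$ for each $j$. The point left to verify is that every $u\in E_j\setminus\{0\}$ has $\kappa(u)=k_j$: one has $\kappa(u)\ge k_j$ because $E_j\subseteq F_{k_j}$, and if $\kappa(u)>k_j$ then $\kappa(u)$, being an achieved order, belongs to $\{k_1,\dots,k_{j-1}\}$, hence $\kappa(u)\ge k_{j-1}$ and $u\in F_{k_{j-1}}$; since $E_j$ is orthogonal to $F_{k_{j-1}}$ this forces $u=0$, a contradiction (for $j=1$ one simply uses that $k_1$ is the largest achieved order).

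For uniqueness, the crucial input is: if $u=\sum_i v_i$ with each $v_i$ real-analytic near $0$ and the orders of the nonzero $v_i$ pairwise distinct, and $v_{i_0}$ is the nonzero summand of smallest order, then the degree-$\kappa(v_{i_0})$ homogeneous part of $u$ equals $(v_{i_0})_\#\neq0$, because every other summand contributes only terms of strictly higher degree; hence $\kappa(u)=\min\{\kappa(v_i):v_i\neq0\}$. Given a second decomposition $E=E_1'\oplus\cdots\oplus E_{p'}'$ with orders $k_1'>\cdots>k_{p'}'$, applying this to $u=\sum_i u_i'$ (with $u_i'\in E_i'$) shows $\{\kappa(u):u\in E\setminus\{0\}\}=\{k_1',\dots,k_{p'}'\}$, which therefore equals $\{k_1,\dots,k_p\}$; so $p=p'$ and $k_j=k_j'$. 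The same input shows $E_1'\oplus\cdots\oplus E_j'=\{0\}\cup\{u:\kappa(u)\ge k_j\}=F_{k_j}=E_1\oplus\cdots\oplus E_j$ for each $j$, and since both decompositions are $L^2$-orthogonal, $E_j'$ and $E_j$ are each the orthogonal complement of $F_{k_{j-1}}$ in $F_{k_j}$ (with $E_1'=E_1=F_{k_1}$), so $E_j'=E_j$. The hard part is really only the uniqueness bookkeeping together with the non-cancellation of leading terms; the finiteness of $\kappa$ on $E\setminus\{0\}$ is the standard strong unique continuation property of Laplace eigenfunctions, and the rest is manipulation of the flag $\{F_{k_j}\}$.
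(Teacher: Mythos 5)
Your proposal is correct and follows essentially the same route as the paper's proof in Appendix~\ref{app:DecompES}: the filtration by order of vanishing (your $F_k$ is the paper's $N_{k-1}$), the $k_j$'s as its jump points, $E_j$ as the orthogonal complement of the next larger filtration space inside the smaller one, and uniqueness via the fact that the filtration is determined by any admissible decomposition. The only cosmetic difference is that you make explicit the non-cancellation of leading homogeneous parts, which the paper leaves as an easy check.
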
 

\begin{proof} We use the fact that all eigenfunctions are analytic, and thus any function in $E\setminus\{0\}$ has a finite order of vanishing. For any $k\in \N$, let us define the mapping $\Pi_k:E\to \R_k[X_1,\dots,X_d]$ that associates to a function its Taylor expansion at $ 0$, truncated to order $k$ (here $\R_k[X_1,\dots,X_d]$ is the space of polynomials in $d$ variables of degree at most $k$). Explicitly, using the standard multi-index notation,
\[\Pi_ku=\sum_{ \beta\in \N^d,\,|\beta|\le k}\frac1{ \beta!}D^{ \beta} u(0)X^{ \beta}.\]    
The mapping $\Pi_k$ is clearly linear, and we denote its kernel by $N_k$. We note that the sequence of subspaces $(N_k)_{k\ge0}$ is non-increasing with respect to inclusion: $N_{k+1}\subseteq N_k$. We claim that $N_k=\{0\}$ for $k$ large enough. Indeed, if it were not the case, the vector space 
\[N_\infty\equiv\bigcap_{k\in \N}N_k\]
would not be trivial, while any function in $N_\infty$ has an infinite order of vanishing, contradicting the remark at the beginning of the proof. It will be convenient in what follows to use the convention $N_{-1}\equiv E$.

We now define the finite sequence
\[k_1>k_2>\dots>k_p\ge0\]
as the integers at which there is a jump in the non-increasing sequence  $(\mbox{dim}(N_k))_{k\ge-1}$. More explicitly, the sequence $(k_j)_{1\le j\le p}$ consists of all the integers $k\ge 0$ for which $ \mbox{dim}(N_k)<\mbox{dim}(N_{k-1})$, arranged in decreasing order.
This means that
\[\{0\}= N_{k_1}\subsetneq N_{k_{2}}\subsetneq\dots\subsetneq N_{k_p}\subsetneq E\]
and moreover
\begin{itemize}
	\item for all $1\le  j\le p-1$ and $k_{j+1} \le k <k_{j}$, $N_k=N_{k_{j+1}}$;
	\item for all $k\ge k_1$, $N_k=\{0\}$.
\end{itemize}

Note that if $E$ contains an eigenfunction which does not vanish at $0$, $N_0=\mbox{ker}(\Pi_0)\neq E=N_{-1}$, and therefore $k_p=0$. This motivates our convention $N_{-1}=E$. For convenience, we define the additional value $k_{p+1}\equiv-1$.

We now define, for $1\le j \le p$, 
\[E_{j}\equiv N_{k_{j+1}}\cap N_{k_j}^\perp,\]
so that we have the orthogonal decomposition
\begin{equation}
\label{eqDecompj}N_{k_{j+1}}= N_{k_j}\oplus E_{j}.
\end{equation}
We obtain in this way the orthogonal decomposition 
\[E=E_1\oplus\dots\oplus E_p.\] 
Let us now show that this decomposition satisfies the property: any $u\in E_j\setminus\{0\}$ has order of vanishing exactly $k_j$. We have $E_j\subseteq N_{k_{j+1}}$ and, by definition of the sequence $(k_j)_{1\le j\le p+1}$,  $N_{k_{j+1}}=N_{k_j-1}$, so $E_j\subseteq N_{k_j-1}$. Therefore, any $u\in E_j$ has order of vanishing at least $k_j$. If the order is greater that $k_j$, $u$ belongs to $N_{k_j}$, and by \eqref{eqDecompj} this implies $u=0$. This shows the required property. We have proved existence in the proposition.

 To show uniqueness, let us consider a decomposition 
\[E=E_1\oplus\dots\oplus E_p\] 
and a sequence $(k_j)_{1\le j\le p+1}$ satisfying the hypotheses of the theorem, with the additional term $k_{p+1}=-1$. Then, for all $1\le j\le p+1$,
 \[N_{k_j}=\oplus_{i=1}^{j-1}E_i.\]
 Futhermore, we can check easily that $N_{k}=N_{k_j}$ for all $k_{j+1}\le k< k_j$, with $1\le j\le p$, and that $N_k=\{0\}$ for all $k\ge k_1$. The sequence $(N_k)_{k\ge-1}$ is thus determined by the decomposition, with the $k_j$'s corresponding to the jumps in dimension. The given decomposition therefore coincides with the one constructed in the first part of the proof.
 \end{proof}

Let us conclude by providing an upper bound for the dimension of the subspaces (Proposition \ref{prop:MaxDim}).

\begin{prop} Let $E=E_1\oplus\dots\oplus E_p$, be the order decomposition of the previous Proposition. Then the dimension of $E_j$ is at most the dimension of the space of spherical harmonics in $d$ variables of degree $k_j$.
\end{prop}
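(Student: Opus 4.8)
The plan is to produce, for each $j$, an injective linear map from $E_j$ into the space $\mathcal H_{k_j}$ of harmonic homogeneous polynomials of degree $k_j$ in $d$ variables. Since that space is linearly isomorphic, via restriction to $\Sd$, to the space of spherical harmonics of degree $k_j$, the desired bound on $\dim(E_j)$ will follow from the classical dimension formula for spherical harmonics.

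The map I would use is the ``principal part'' map $T_j\colon E_j\to\mathcal H_{k_j}$ sending $u$ to $P_{k_j}(u)$, the homogeneous component of degree $k_j$ in the Taylor expansion of $u$ at $0$. First I would check that it is well defined with values in $\mathcal H_{k_j}$: by Proposition \ref{prop:DecompES} every $u\in E_j\setminus\{0\}$ has order of vanishing exactly $k_j$, so $P_{k_j}(u)=u_\#$ is genuinely its principal part (and $P_{k_j}(0)=0$); harmonicity of $P_{k_j}(u)$ comes from matching the homogeneous components of degree $k_j-2$ in the identity $\Delta u=-\lambda_N u$, the right-hand side contributing nothing at that degree because $u$ vanishes to order $k_j$. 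Linearity of $T_j$ is immediate since Taylor coefficients depend linearly on the function. Next I would check injectivity: if $u\in E_j$ and $P_{k_j}(u)=0$, then, $u$ already vanishing to order $k_j$, it would vanish to order at least $k_j+1$; but a nonzero element of $E_j$ has order exactly $k_j$, so $u=0$.

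This yields $\dim(E_j)\le\dim(\mathcal H_{k_j})$. To conclude I would identify $\mathcal H_{k_j}$ with the degree-$k_j$ spherical harmonics: restriction to $\Sd$ is linear and injective on homogeneous polynomials, since a homogeneous polynomial $P$ of degree $k$ satisfies $P(x)=|x|^k P(x/|x|)$ for $x\neq 0$ and hence vanishes identically as soon as it vanishes on $\Sd$. The space of spherical harmonics of degree $k_j$ has dimension $\binom{k_j+d-2}{k_j}+\binom{k_j+d-3}{k_j-1}$ (see \cite[pp.~159--165]{Berger1971}), which is exactly the asserted bound. The argument is mostly bookkeeping; the only genuinely load-bearing point is that $T_j$ takes values among harmonic homogeneous polynomials of the correct degree, which is precisely the eigenvalue equation combined with the defining property of the order decomposition.
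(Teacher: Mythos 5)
Your proposal is correct and follows essentially the same route as the paper: both use the linear ``principal part'' map $u\mapsto P_{k_j}(u)$ from $E_j$ into the harmonic homogeneous polynomials of degree $k_j$, prove its injectivity from the fact that nonzero elements of $E_j$ vanish to order exactly $k_j$ (equivalently, via the decomposition $N_{k_{j+1}}=N_{k_j}\oplus E_j$), and conclude with the classical dimension count for spherical harmonics. Your explicit verification that the degree-$(k_j-2)$ component of $\Delta u=-\lambda_N u$ forces harmonicity of the principal part is a detail the paper delegates to its introduction, but the argument is identical.
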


\begin{proof} For $1\le j \le k$, we define the mapping $P_j: E_j\to \R_{k_j}[X_1,\dots,X_d]$,  which associate to $u$ its principal part, namely 
\[P_j u\equiv u_\#=\sum_{ \beta\in \N^d,\,|\beta|= k_j}\frac1{ \beta!}D^{ \beta} u(0)X^{ \beta}.\]
According to the classical results on the local behavior of eigenfunctions, recalled in the introduction, $P_j u$ is a harmonic homogeneous polynomial in $d$ variables of degree $k_j$. The mapping $P_j$ is clearly linear.  In order to prove that $P_j$ is injective, let us consider $u, v \in E_j$ such that $P_ju = P_jv$. By
linearity,  $P_j(u-v) = 0$, which means $u-v \in N_{k_j}$. By \eqref{eqDecompj}, this implies that $u-v = 0$.

 The dimension of $E_j$ is therefore equal to that of its image by $P_j$, which is contained in the space of spherical harmonics in $d$ variables of degree $k_j$.\end{proof}

\section{Lemma on small eigenvalues}
\label{app:smallEVs}

We want to approximate the eigenvalues of a quadratic form which are close to $0$. Let us recall the result.

\begin{prop}
Let $(\mathcal H, \|\cdot\|)$ be a Hilbert space and $q$ be a quadratic form, semi-bounded from below (not necessarily positive), with domain $\mathcal D$ dense in $\mathcal H$ and with discrete spectrum $\{ \nu_i \}_{i\geq1}$. Let $\{ g_i \}_{i\geq1}$ be an orthonormal basis of eigenvectors of $q$. Let $N$ and $m$ be positive integers, $F$ an $m$-dimensional subspace of $\mathcal D$ and $\{  \xi_i^F  \}_{i=1}^m$ the eigenvalues of the restriction of $q$ to $F$.

Assume that there exist positive constants $\gamma$ and $\delta$ such that
\begin{itemize}
 \item[(H1)] $ 0<\delta<\gamma/{\sqrt2}$;
  \item[(H2)] for all $i\in\{1,\dots,m\}$, $|\nu_{N+i-1}|\le\gamma$, $\nu_{N+m}\ge \gamma$ and, if $N\ge2$, $\nu_{N-1}\le-\gamma$;
 \item[(H3)] $ |q(\varphi,g)|\leq \delta\, \|\varphi \|\,\|g\|$ for all $g\in\mathcal D$ and $\varphi \in F$. 
\end{itemize}
Then we have
\begin{itemize}
 \item[(i)] $|\nu_{N+i-1}- \xi_i^F |\le\frac{ 4}{\gamma}\delta^2$ for all $i=1,\ldots,m$; 
 \item[(ii)] $\left\|\Pi_N - \mathbb{I} \right\|_{\mathcal L(F,\mathcal H)} \leq   \sqrt2\delta/\gamma$, where $\Pi_N$ is the projection onto the subspace of $\mathcal D$ spanned by $\{g_N,\ldots,g_{N+m-1}\}$. 
\end{itemize}
\end{prop}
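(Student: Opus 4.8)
The plan is to exploit the fact that, under the hypotheses, the subspace $F$ is a small perturbation of the ``true'' spectral subspace $V\equiv\mathrm{span}\{g_N,\dots,g_{N+m-1}\}$, on which $q$ has precisely the eigenvalues $\nu_N,\dots,\nu_{N+m-1}$. I would first introduce, besides $\Pi_N$ (the orthogonal projection onto $V$), the orthogonal projections $P_+$ onto $\overline{\mathrm{span}}\{g_j:j\ge N+m\}$ and $P_-$ onto $\overline{\mathrm{span}}\{g_j:j\le N-1\}$, so that $\mathbb{I}=\Pi_N+P_++P_-$ and the three ranges are mutually $q$-orthogonal because the $g_j$ diagonalize $q$. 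For $\varphi\in\mathcal D$ I write $\varphi_0=\Pi_N\varphi$, $\varphi_\pm=P_\pm\varphi$. Throughout I would use the standard facts that each $g_j$ lies in $\mathcal D$, that $q(g_j,\psi)=\nu_j\langle g_j,\psi\rangle$ for every $\psi\in\mathcal D$, and that the eigenfunction expansion of any element of $\mathcal D$ converges in the form norm; these make the manipulations below legitimate. I would prove (ii) first and then deduce (i) from it.

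For (ii): fix $\varphi\in F$. Since $\varphi_+\in\mathcal D$, hypothesis (H3) gives $|q(\varphi,\varphi_+)|\le\delta\|\varphi\|\,\|\varphi_+\|$; on the other hand $q(\varphi,\varphi_+)=q(\varphi_+,\varphi_+)=\sum_{j\ge N+m}\nu_j\langle\varphi,g_j\rangle^2\ge\gamma\|\varphi_+\|^2$ by (H2), whence $\|\varphi_+\|\le(\delta/\gamma)\|\varphi\|$. Applying (H3) with the test function $-\varphi_-\in\mathcal D$ and using $q(\varphi,-\varphi_-)=-q(\varphi_-,\varphi_-)=-\sum_{j\le N-1}\nu_j\langle\varphi,g_j\rangle^2\ge\gamma\|\varphi_-\|^2$ (this sum being empty when $N=1$) gives likewise $\|\varphi_-\|\le(\delta/\gamma)\|\varphi\|$. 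Since $(\mathbb{I}-\Pi_N)\varphi=\varphi_++\varphi_-$ with $\varphi_+\perp\varphi_-$, we obtain $\|(\mathbb{I}-\Pi_N)\varphi\|^2=\|\varphi_+\|^2+\|\varphi_-\|^2\le2(\delta/\gamma)^2\|\varphi\|^2$, which is (ii). I regard this step as the crux: the form $q$ is indefinite, so one cannot simply test (H3) against $(\mathbb{I}-\Pi_N)\varphi$ (the ``high'' and ``low'' spectral pieces could cancel); the device is to separate those pieces and exploit the one-sided gaps of (H2) on each.

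For (i): by (H1) together with (ii), $\|\mathbb{I}-\Pi_N\|_{\mathcal L(F,\mathcal H)}\le\sqrt2\,\delta/\gamma<1$, and since $\dim F=m=\dim V$ this forces $\Pi_N|_F\colon F\to V$ to be an isomorphism. For $\varphi\in F$ with $\|\varphi\|=1$, writing $\varphi=\varphi_0+\varphi'$ with $\varphi'=(\mathbb{I}-\Pi_N)\varphi$, the $q$-orthogonality of the ranges gives $q(\varphi,\varphi)=q(\varphi_0,\varphi_0)+q(\varphi',\varphi')$ and also $q(\varphi',\varphi')=q(\varphi,\varphi')$, so $|q(\varphi',\varphi')|\le\delta\|\varphi'\|\le\sqrt2\,\delta^2/\gamma$ by (H3) and (ii); moreover $\|\varphi_0\|^2=1-\|\varphi'\|^2$ with $\|\varphi'\|^2\le2\delta^2/\gamma^2$. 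To bound $\xi_i^F$ from above I would feed into the min--max principle for $q|_F$ the $i$-dimensional subspace $W_i\equiv(\Pi_N|_F)^{-1}\big(\mathrm{span}\{g_N,\dots,g_{N+i-1}\}\big)$: for $\varphi\in W_i$, $\varphi_0\in\mathrm{span}\{g_N,\dots,g_{N+i-1}\}$, so $q(\varphi_0,\varphi_0)\le\nu_{N+i-1}\|\varphi_0\|^2\le\nu_{N+i-1}+|\nu_{N+i-1}|\,\|\varphi'\|^2\le\nu_{N+i-1}+2\delta^2/\gamma$ (using $|\nu_{N+i-1}|\le\gamma$ from (H2)), hence $q(\varphi,\varphi)\le\nu_{N+i-1}+(2+\sqrt2)\delta^2/\gamma$ and $\xi_i^F\le\nu_{N+i-1}+4\delta^2/\gamma$. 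Symmetrically, feeding the $(m-i+1)$-dimensional subspace $(\Pi_N|_F)^{-1}\big(\mathrm{span}\{g_{N+i-1},\dots,g_{N+m-1}\}\big)$ into the max--min principle and estimating $q(\varphi_0,\varphi_0)\ge\nu_{N+i-1}\|\varphi_0\|^2$ from below yields $\xi_i^F\ge\nu_{N+i-1}-4\delta^2/\gamma$, and together these give (i).

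The remaining points are routine bookkeeping: verifying that $W_i$ and its companion have the announced dimensions (immediate once $\Pi_N|_F$ is known to be an isomorphism), checking the elementary spectral estimates $\nu_j\ge\nu_{N+m}\ge\gamma$ for $j\ge N+m$ and $\nu_j\le\nu_{N-1}\le-\gamma$ for $j\le N-1$, and observing that the constant $2+\sqrt2$ produced by the error terms is indeed $\le 4$. No further input is needed beyond the abstract hypotheses.
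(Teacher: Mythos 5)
Your proposal is correct and, for part (ii), follows essentially the same computation as the paper: decompose $\varphi-\Pi_N\varphi$ into the ``high'' piece $\varphi_+$ and the ``low'' piece $\varphi_-$, use the $q$-orthogonality of the spectral projections together with (H2) to get $\gamma\|\varphi_\pm\|^2\le|q(\varphi_\pm)|$, and test (H3) against $\varphi_\pm$ to conclude $\|\varphi_\pm\|\le(\delta/\gamma)\|\varphi\|$. For part (i) your route is slightly different from the paper's: you estimate $\xi_i^F$ from above and from below separately by plugging the specific test subspaces $(\Pi_N|_F)^{-1}\big(\mathrm{span}\{g_N,\dots,g_{N+i-1}\}\big)$ and $(\Pi_N|_F)^{-1}\big(\mathrm{span}\{g_{N+i-1},\dots,g_{N+m-1}\}\big)$ into the min--max and max--min characterizations, and you control the perturbation terms directly, ending with the slightly sharper constant $(2+\sqrt2)\delta^2/\gamma\le 4\delta^2/\gamma$. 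The paper instead proves a uniform bound $\bigl|q(\varphi)/\|\varphi\|^2 - q(\Pi_N\varphi)/\|\Pi_N\varphi\|^2\bigr|\le 4\delta^2/\gamma$ on the difference of Rayleigh quotients (using $|q(\Pi_N\varphi)|/\|\Pi_N\varphi\|^2\le\gamma$ from (H2)) and then invokes the bijectivity of $\Pi_N\colon\mathcal F_i\to\mathcal E_i$ once; this treats the upper and lower estimates in a single stroke. Both routes rest on the same ingredients ($\Pi_N|_F$ being an isomorphism, the $q$-orthogonal decomposition, (H2)--(H3)), so the difference is cosmetic; your version produces a marginally better numerical constant, while the paper's version is a bit more compact because the two bounds come out of one inequality.
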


\begin{proof} 
Let us prove only the case $N\geq2$. The case $N=1$ is analogous but simpler. We write $E_N \equiv\mbox{span}\{g_N,\dots,g_{N+m-1}\}$. For all $\varphi\in F$, we set $g\equiv\Pi_N\varphi$ and $h\equiv \varphi-g$. 
 Furthermore, we write $h=h_-+h_+$, with
\begin{align*}
	h_-=&\sum_{i=1}^{N-1}c_i g_i;\\
	h_+=&\sum_{i=N+m}^{\infty}c_i g_i.
\end{align*}
Note that in the case $N=1$ there is no $h_-$ in the decomposition of $h$; the proof follows accordingly.
 
The vectors $\varphi$ and $h$ are orthogonal both for the scalar product in $\mathcal H$ and for the quadratic form $q$, so that
\begin{align}
	\label{eq:Hsum} \|\varphi\|^2=&\|g\|^2+\|h\|^2;\\
	\label{eq:Qsum} q(\varphi)=&q(g)+q(h).
\end{align}
 In addition, $h_+$ and $h_-$ are orthogonal both for the scalar product and for $q$, so that
\begin{align}
	\label{eq:Hsumh}\|h\|^2=&\|h_-\|^2+\|h_+\|^2;\\
	\label{eq:Qsumh} q(h)=&q(h_-)+q(h_+).
\end{align}
Note that 
\begin{align*}
	\|h_-\|^2&=\sum_{i=1}^{N-1}|c_i|^2;\\
	\|h_+\|^2&=\sum_{i=N+m}^{\infty}|c_i|^2;\\
	q(h_-)&=\sum_{i=1}^{N-1}\nu_i|c_i|^2\le0;\\
	q(h_+)&=\sum_{i=N+m}^{\infty} \nu_i|c_i|^2\ge0.
\end{align*}

Hypothesis (H2) implies that 
\begin{equation}\label{eq:appB}
\gamma\|h_\pm\|^2\le|q(h_\pm)|.
\end{equation}
Recalling that $\varphi=h+g$, Assumption (H3) implies
\begin{equation*}
	|q(h_\pm)|=|q(\varphi,h_\pm)|\le \delta\|\varphi\|\|h_\pm\|\le \frac1{\sqrt{\gamma}}\delta\|\varphi\|\sqrt{|q(h_\pm)|},
\end{equation*}
from which
\begin{equation*}
	|q(h_\pm)|\le\frac1\gamma\delta^2\|\varphi\|^2.
\end{equation*}
Equation \eqref{eq:appB} thus implies
\begin{equation*}
	\|h_\pm\|\le\frac1\gamma\delta\|\varphi\|,
\end{equation*}
and finally
\begin{equation*}
	\|h\|\le\frac{\sqrt2}\gamma\delta\|\varphi\|.
\end{equation*}

We have proved (ii).

To prove (i), let us first remark that (H1), together with (ii), implies that $\Pi_N:F\to E_N$ is injective, and therefore bijective since $\mbox{dim}(F)= m=\mbox{dim}(E_N)$. We now assume that $\varphi$ (and therefore $g$) is non-zero. Then, using 
Identities \eqref{eq:Hsum}-\eqref{eq:Qsumh},
\begin{align*}
\left|\frac{|q(\varphi)|}{\|\varphi\|^2}-\frac{|q(g)|}{\|g\|^2}\right|=&\left|\frac{(q(g)+q(h))\|g\|^2-q(g)(\|g\|^2+\|h\|^2)}{\|\varphi\|^2\|g\|^2}\right|= \left|\frac{q(h)}{\|\varphi\|^2}-\frac{q(g)}{\|g\|^2}\frac{\|h\|^2}{\|\varphi\|^2}\right|\\
&\le \frac{|q(h)|}{\|\varphi\|^2}+\frac{|q(g)|}{\|g\|^2}\frac{\|h\|^2}{\|\varphi\|^2}\le\frac{|q(h_-)|+|q(h_+)|}{\|\varphi\|^2}+\frac{|q(g)|}{\|g\|^2}\frac{\|h_-\|^2+\|h_+\|^2}{\|\varphi\|^2}.
\end{align*}
Hypothesis (H2) implies that $|q(g)|/\|g\|^2\le \gamma$. We finally find
\begin{equation} 
\label{eq:RQ}
\left|\frac{|q(\varphi)|}{\|\varphi\|^2}-\frac{|q(g)|}{\|g\|^2}\right|\le\frac{4\delta^2}\gamma.
\end{equation}
We recall the min-max characterization of the eigenvalues $ \{\xi_i^F\}_{i=1}^m$:
\begin{equation*}
 \xi_i^F =\min_{ W\in \mathcal F_i}\max_{ \varphi\in W\setminus\{0\}}\frac{q(\varphi)}{\|\varphi\|^2},
\end{equation*}
where $\mathcal F_i$ is the set of $i$-dimensional subspaces of $F$. By construction of $E_N$, the eigenvalues of the restriction of $q$ to $E_N$ are $\{\nu_{i}\}_{i=N}^{ N+m-1}$. They can also be computed by the min-max characterization:
\begin{equation*}
 \nu_{N+i-1}=\min_{ V\in \mathcal E_i}\max_{ g\in V\setminus\{0\}}\frac{q(g)}{\|g\|^2},
\end{equation*}
where $\mathcal E_i$ is the set of $i$-dimensional subspaces of $E_N$. If we combine these characterizations with Inequality \eqref{eq:RQ} and use the fact that $\Pi_N$ maps $\mathcal F_i$ to $\mathcal E_i$ bijectively, we obtain (i).
\end{proof}

\section{ Approximation of eigenvalues}\label{app:appEVs}

Let us recall the situation we are considering. We are studying the restriction of the quadratic form $q_\eps$ to the $m$-dimensional subspace $F_\eps\subseteq \mathcal D_\eps$. We have found a basis 
$ \{v_i^\eps\}_{i=1}^m$ of $ F_\eps$ such that 
\begin{enumerate}
	\item the matrix $A_\eps\equiv[q_\eps(v_i^\eps,v_j^\eps)]$ of $q_\eps$ in the basis $\{v_i^\eps\}$ has eigenvalues $\{\mu_i^\eps\}$, with $\mu_i^\eps=O(\chi_\eps^2)$;
	\item the Gram matrix $C_\eps\equiv[\langle v_i^\eps,v_j^\eps\rangle]$ is of the form $C_\eps=\mathbb I+o(1)$.
\end{enumerate}
We want to show that the eigenvalues $\{ \xi_i^\eps \}$ of $q^\eps_{|F^\eps}$ satisfy
\begin{equation}\label{eq:EVexp}
	\xi_i^\eps=\mu_i^\eps+o\left(\chi_\eps^2\right).
\end{equation}

Let us denote by $\{w_i^\eps\}_{i=1}^m$ the basis of $F_\eps$ obtained from $ \{v_i^\eps\}_{i=1}^m$ by the Gram-Schmidt orthogonalization procedure. It follows from the form of $C_\eps$ that the change-of-basis matrix $P_\eps$ satisfies $P_\eps=\mathbb I+o(1)$ (this can easily be checked by writing down the expression of the $w_i^\eps$'s in terms of the  $v_i^\eps$'s). Let us denote by $B_\eps$ the matrix of $q^\eps_{|F^\eps}$ in the orthogonal basis $ \{w_i^\eps\}_{i=1}^m$. Then $B_\eps$ has eigenvalues $\{\xi_i^\eps\}$. We have
\begin{equation}\label{eq:matrixExp}
	B_\eps=P_\eps^TA_\eps P_\eps=\left(\mathbb I+o(1)\right)^TA_\eps\left(\mathbb I+o(1)\right)=A_\eps+o\left(\chi_\eps^2\right).
\end{equation}
The expansions \eqref{eq:EVexp} follow directly from \eqref{eq:matrixExp} and the min-max characterization of eigenvalues.

\section*{Acknowledgements}
L.~Abatangelo and P.~Musolino are members of the ``Gruppo Nazionale per l'Analisi Matematica, la Probabilit\`a e le loro Applicazioni'' (GNAMPA) of the ``Istituto Nazionale di Alta Matematica'' (INdAM). They are partially supported by the GNAMPA-INdAM 2020 project ``Analisi e ottimizzazione asintotica per autovalori in domini con piccoli buchi''.

C.~Léna acknowledges the support of COST (European Cooperation in Science and Technology) through the COST Action CA18232 MAT-DYN-NET.

P.~Musolino acknowledges the support of the Project BIRD191739/19 ``Sensitivity analysis of partial differential equations in
the mathematical theory of electromagnetism'' of the University of Padova and of the grant ``Challenges in Asymptotic and Shape Analysis - CASA''  of the Ca' Foscari University of Venice.

\end{document}